\newtheorem{Definition}{Definition}[section]
\newtheorem{Theorem}[Definition]{Theorem}
\newtheorem{Lemma}[Definition]{Lemma}
\newtheorem{Proposition}[Definition]{Proposition}
\newtheorem{Corollary}{Corollary}[section]
\theoremstyle{definition}
\newtheorem{Obs}{Observation}[section]
\newtheorem{example}{Examples}[section]
\theoremstyle{definition}
\newtheorem{Remark}{Remark}[section]
\newcommand{\C}{\mathbb C}
\newcommand{\Z}{\mathbb Z}
\newcommand{\N}{\mathbb N}
\begin{document}
\baselineskip16pt

\author[Sumit Kumar Rano and Rudra P. Sarkar]{Sumit Kumar Rano and Rudra P. Sarkar}

\address{Sumit Kumar Rano \endgraf Stat-Math Unit,	\endgraf Indian Statistical Institute,	\endgraf 203 B. T. Road, Kolkata 700108, India.} \email{sumitrano1992@gmail.com}

\address{Rudra P. Sarkar  \endgraf Stat-Math Unit,	\endgraf Indian Statistical Institute, 	\endgraf 203 B. T. Road, Kolkata 700108, India.} \email{rudra.sarkar@gmail.com, rudra@isical.ac.in}

\title[A Theorem of Strichartz for Multipliers on homogeneous trees]
{A Theorem of Strichartz for Multipliers on Homogeneous Trees}
\subjclass[2010]{Primary 43A85 Secondary 20E08, 39A12, 43A90}
\keywords{Laplacian, Multiplier, Eigenfunction, Poisson Transform, Homogeneous Tree}

\begin{abstract}
A theorem of Strichartz states that if a  uniformly bounded bi-infinite sequence of functions on the Euclidean spaces, satisfies the condition that the Laplacian acting on a function in this sequence yields the next one, then each function in this sequence is an eigenfunction of the Laplacian. We consider a generalization of this result for homogeneous trees, where we replace bounded functions by tempered distributions and the Laplacian by multiplier operators acting on the tempered distributions. After establishing the result in this general context, we narrow our focus to specific cases, which includes important examples of multiplier operators such as heat and Schr\"odinger operator, ball and sphere averages of function.
\end{abstract}


\maketitle


\section{Introduction}
Inspired by a result of Roe \cite{R}, Strichartz \cite{S} proved the following theorem.

\begin{Theorem}\label{th2} Suppose that a
bi-infinite sequence $\{f_{k}\}_{k\in\mathbb{Z}}$ of functions on $\mathbb{R}^{n}$ satisfies $\Delta_{\mathbb{R}^{n}} f_{k}=f_{k+1}$ and
$\|f_{k}\|_{L^{\infty}(\mathbb{R}^{n})}\leq M$, for all $k\in\mathbb{Z}$ and for a constant $M>0$. Then $\Delta_{\mathbb{R}^{n}} f_{0}=-f_{0}$.
\end{Theorem}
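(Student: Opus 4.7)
The plan is to pass to the Fourier side and use the uniform $L^\infty$ bound to force the spectral support of $f_0$ onto the sphere $\{|\xi|=1\}$. Since $f_k \in L^\infty(\R^n) \subset \mathcal{S}'(\R^n)$, the Fourier transforms $\mu_k := \widehat{f_k}$ are tempered distributions, and the recursion $\Delta f_k = f_{k+1}$ reads $\mu_{k+1} = -|\xi|^2\,\mu_k$ (with the convention $\widehat{\Delta f} = -|\xi|^2\widehat{f}$). Iterating yields the distributional identity
\[
\langle \mu_k,\phi \rangle = \langle \mu_0,\,(-|\xi|^2)^k\phi \rangle \qquad (k \in \Z),
\]
whenever the right-hand side makes sense as a pairing against a test function, while the pointwise bound $\|f_k\|_\infty \le M$ translates into the uniform estimate
\[
|\langle \mu_k,\phi \rangle| = |\langle f_k,\widehat{\phi}\rangle| \le M\,\|\widehat{\phi}\|_{L^1} \qquad (k \in \Z,\ \phi \in \mathcal{S}(\R^n)).
\]

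The heart of the argument is to show $\mathrm{supp}\,\mu_0 \subset \{|\xi|=1\}$. Fix $\phi \in C_c^\infty(\R^n)$ with support disjoint from the unit sphere; by a partition of unity assume either $\mathrm{supp}\,\phi \subset \{|\xi| \le 1-\delta\}$ or $\mathrm{supp}\,\phi \subset \{|\xi| \ge 1+\delta\}$ for some $\delta>0$. In the first case one applies the identity above with $k=-j$ and in the second with $k=j$, $j \ge 1$, giving
\[
\langle \mu_0,\phi \rangle = \langle \mu_{\mp j},\psi_j \rangle, \qquad \psi_j := (-|\xi|^2)^{\pm j}\phi;
\]
note that $\psi_j \in C_c^\infty(\R^n)$ because in the $k=j$ case $\mathrm{supp}\,\phi$ is separated from the origin. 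In either case $\mathrm{supp}\,\psi_j$ lies in a fixed compact set, and on that set $(-|\xi|^2)^{\pm j}$ decays geometrically together with all derivatives: $\|\psi_j\|_{C^m} = O(j^m q^j)$ for some $q = q(\delta) \in (0,1)$. Hence $\|\widehat{\psi_j}\|_{L^1}\to 0$ via the standard estimate of $\|\widehat{\psi}\|_{L^1}$ by finitely many Schwartz seminorms of $\psi$, and the uniform distributional bound above gives
\[
|\langle \mu_0,\phi \rangle| \le M\,\|\widehat{\psi_j}\|_{L^1} \longrightarrow 0.
\]

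The conclusion is then immediate: $\mathrm{supp}\,\mu_0 \subset \{|\xi|=1\}$ forces $(|\xi|^2-1)\mu_0 = 0$, hence $\mu_1 = -|\xi|^2\mu_0 = -\mu_0$, which on inverse Fourier transform is $\Delta f_0 = -f_0$. The main obstacle is the middle step, i.e.\ converting the merely pointwise bound $\|f_k\|_\infty \le M$ into effective quantitative control on $\langle \mu_0,\phi\rangle$ for test functions supported away from the unit sphere. The mechanism that makes it work is to shift the growing/decaying factor $(-|\xi|^2)^{\pm j}$ from the distribution onto the test function, where it becomes uniformly small on the support of $\phi$ along with all its derivatives, so that $\|\widehat{\psi_j}\|_{L^1}$ can then be controlled by finitely many Schwartz seminorms of $\psi_j$.
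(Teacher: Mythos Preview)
First, note that the paper does not actually prove this theorem; it is quoted in the introduction as a result of Strichartz, so there is no in-paper proof to compare against. Your spectral approach is essentially the right one and mirrors in spirit what the paper does later for the tree analogue.

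That said, your argument has a genuine gap at the very last step. From $\mathrm{supp}\,\mu_0\subset\{|\xi|=1\}$ you conclude that $(|\xi|^2-1)\mu_0=0$; this implication is false for distributions in general. Already in dimension one, $\mu=\delta_1'$ is supported at $\{1\}$ but $(\xi^2-1)\delta_1'=-2\delta_1\neq0$; in higher dimensions a normal derivative of surface measure on $S^{n-1}$ is supported on the sphere yet is not annihilated by $|\xi|^2-1$. What the support statement buys you is only $(|\xi|^2-1)^{N+1}\mu_0=0$ for some $N$ coming from the finite order of $\mu_0$, i.e.\ $(\Delta+1)^{N+1}f_0=0$, and you still have to drive $N$ down to $0$.

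There are two standard ways to close this. One is to sharpen your quantitative step: rather than testing only against $\phi$ supported away from the sphere, test against $(|\xi|^2-1)^N\phi$ for arbitrary $\phi\in C_c^\infty$ and show that $\|\widehat{(-|\xi|^2)^{\pm j}(|\xi|^2-1)^N\phi}\|_{L^1}\to0$; the extra vanishing from $(|\xi|^2-1)^N$ compensates for the growth of $|\xi|^{\pm 2j}$ in a shrinking neighbourhood of the sphere, but this requires a careful decomposition of that neighbourhood (this is exactly the mechanism carried out in Step~1 of Theorem~\ref{maximumcase} in the tree setting). The other is a separate reduction: once $(\Delta+1)^{N+1}f_0=0$ is known, the uniform bound on \emph{all} $f_k$ forces $N=0$, by the Howard--Reese type argument recorded in the paper as Lemma~\ref{generalizedtoeigenfunction}. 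Either way, the passage from ``supported on the sphere'' to ``eigenfunction'' is not free and needs to be argued.
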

 This essentially characterizes all bounded eigenfunctions of the standard Euclidean Laplacian $\Delta_{\mathbb{R}^{n}}$ for which the corresponding eigenvalues are negative.  Strichartz \cite{S} also observed that an analogue of Theorem \ref{th2}  fails for hyperbolic 3-space. Taking the cue from there  Kumar et. al. \cite{KRS2} showed how to generalize this result for all noncompact Riemannian symmetric spaces of rank one. For homogeneous trees an analogue of Theorem \ref{th2} is obtained in \cite{R3}.

In this article we revisit  results of this genre on a homogeneous tree $\mathcal{X}$. Our point of departure is to  view the Laplace operator $\mathcal{L}$ on $\mathcal{X}$ as a translation invariant continuous linear operator acting on the Schwartz spaces, and by duality on the tempered distributions on $\mathcal X$. We shall refer to such operators as {\em multipliers}.  Our aim is to establish an analogue of Theorem \ref{th2} for trees, where we replace the bounded functions $f_k$ and the Laplace operator with  tempered distributions and  multipliers respectively, yielding a characterization of eigendistributions of these multipliers.
Theorems \ref{maximumcase} to \ref{maximumcasezero} are our endeavors in this pursuit.

Having established our results in this general set up, we restrict our attention to  particular cases. Our ideal candidates for $L^p$-tempered distributions are weak-$L^p$-functions on $\mathcal X$ and that for multipliers are $\Psi(\mathcal L)$,  where $\Psi$ is a holomorphic function defined on an open set containing the $L^p$-spectrum of the Laplace operator $\mathcal L$ on $\mathcal{X}$. The purpose of focusing on these particular cases  is that here  we can strengthen our conclusions from eigenfunctions of multipliers to eigenfunctions of the Laplacian, which can be concretely realized as Poisson transform of $L^p$-functions on the boundary of $\mathcal X$.

As important examples we take  $\Psi$ to be  the complex-time heat operator, (which includes both the usual heat operator and the Schr\"odinger operator) and  the sphere and ball averages of a function.

The choice of weak $L^p$-functions with $p>2$ is not arbitrary. It can be justified observing that the elementary spherical functions corresponding to the points on the boundary of $L^p$-spectrum are  weak-$L^p$ but not $L^p$-functions on $\mathcal{X}$.  Alternatively, one can use functions satisfying Hardy-type norm estimates (see \cite{R4}) to achieve the same results (see  Subsection \ref{remark on hardy type functions}).

It may not be out of place to note that, despite being regarded  as the discrete analogue of the hyperbolic spaces, the results on rank one symmetric spaces do not imply corresponding results for trees. There are significant differences in the treatment of these two cases. Furthermore, our results in this article are more general than those established for $G/K$ in  \cite{NS}.

In this article, we have limited our scope to the case $p\in [1, 2)\cup (2,\infty]$. This is  because, for $p=2$ the problem exhibits unique characteristics due to the one-dimensionality of the spectrum of Laplacian, which in fact makes it easier to handle, whereas the $p\neq 2$ cases can be approached in a coherent manner.

In Section \ref{Section 2}, we establish the notation and  gather all preliminary results we require. In Section \ref{Section 3}, we define multiplier and obtain some results on them. With this preparation, the main results in the general setting is proved in Section \ref{pcaseroestrichartz}. In the next section we apply them on weak $L^p$-functions and specific multipliers.

\section{Preliminaries}\label{Section 2}

\subsection{Generalities}
The letters $\mathbb{N}$, $\mathbb{Z}$, $\mathbb{R}$ and $\mathbb{C}$ will respectively denote the set of natural numbers, integers, real and complex numbers.  Set of all non-negative integers and nonzero complex numbers are denoted by $\mathbb{Z}_{+}$ and $\C^\times$ respectively. For $z\in \C$, $\Re z$ and $\Im z$  respectively are its  real and imaginary parts. If $f_{1}$ and $f_{2}$ are two non-negative functions on $X$, we say that $f_{1}\asymp f_{2}$ if there exist positive constants $A$ and $B$ such that $Af_{1}(t)\leq f_{2}(t)\leq Bf_{1}(t)$ for all $t$ in $X$. For two functions $f_{1}, f_{2}$ and distribution $T$, the notation $\langle f_{1}, f_{2}\rangle$ will mean $\int f_{1}f_{2}$ and  $\langle T, f_1\rangle$ will
indicate that  $T$ is acting on  $f_1$, whenever they make sense. For any $p\in(1,\infty)$, let $p^{\prime}$ denote the conjugate index $p/(p-1)$. We further assume $p^{\prime}=\infty$ when $p=1$ and vice-versa. For $p\in(1,\infty)$, let
$$\delta_{p}=\frac{1}{p}-\frac{1}{2}\quad\text{and}\quad S_{p}=\{z\in\mathbb{C}:|\Im z|\leq|\delta_{p}|\}.$$
We assume $\delta_{1}=-\delta_{\infty}=1/2$ so that $S_{1}=\{z\in\mathbb{C}:|\Im z|\leq 1/2\}$. The  interior and the boundary of $S_{p}$ will be denoted by  $S^{\circ}_{p}$ and $\partial S_{p}$ respectively.

\subsection{Weak \texorpdfstring{$L^p$}{lp} spaces}
For a measurable function $f:M\rightarrow\C$ on a $\sigma$-finite measure space $(M, m)$ and for  $p\in[1,\infty)$,  the weak $L^p$-norm of $f$ is denoted by $\|f\|_{p, \infty}$ and is defined by $\|f\|_{p, \infty} =\sup\limits_{t>0}td_{f}(t)^{1/p}$
where for $t>0$, $d_{f}(t)=m\left(\{x\in M: |f(x)|>t\}\right)$ is the distribution function of  $f$. The weak $L^p$ space  $L^{p,\infty}(M)$ is defined as the collection of all measurable functions $f:M\rightarrow\mathbb{C}$ which satisfy $\|f\|_{p, \infty}<\infty$. The space $L^{\infty,\infty}(M)$ is same as $L^{\infty}(M)$. It is known that $\|f\|_{p, \infty}\le \|f\|_p$ and hence $L^{p}(M)\subseteq L^{p,\infty}(M)$ (see \cite[Chapter 1]{LG1} for details).

\subsection{Homogeneous trees}
We briefly introduce the terminologies and definitions relevant to this article. We shall mainly follow \cite{CS2,FTP1,FTP2}. Our parametrization of various objects including the Poisson transform and the Helgason-Fourier transform will be consistent with that of \cite{CS2} and will differ slightly from those in \cite{FTP1,FTP2}.

Throughout this paper, $\mathcal{X}$ will denote a homogeneous tree of degree $q+1$ ($q\geq 2$), that is, a connected graph with no cycles where every vertex has $q+1$ neighbours. We identify $\mathcal{X}$ with the set of all vertices and endow $\mathcal{X}$ with the canonical discrete distance $d$ defined by the number of edges lying in the unique geodesic path joining the two vertices. We fix an arbitrary reference point $o$ in $\mathcal{X}$ and abbreviate $d(o,x)$ with $|x|$. We endow $\mathcal{X}$ with the counting measure and write $\#E$ to denote the cardinality of a finite subset $E$ of $\mathcal{X}$. For $x\in\mathcal{X}$ and $n\in\mathbb{Z}_{+}$, $B(x,n)$ and $S(x,n)$ will respectively denote the ball and the sphere centered at $x$ and of radius $n$. Observe that
$$\# S(x,n)=\begin{cases}
1 & \text{if }n=0,\\
(q+1)q^{n-1} & \text{if }n\neq 0,
\end{cases}\quad\text{and}\quad \# B(x,n)\asymp\# S(x,n)\asymp q^{n}.$$
Therefore the metric measure space $(\mathcal{X},d,\#)$ is of exponential volume growth. Let $G$ be the group of isometries of  $(\mathcal{X},d)$ and $K=\{g\in G:g(o)=o\}$. The group $G$ acts transitively on $\mathcal{X}$ via the map $(g,x)\mapsto g\cdot x=g(x)$. The action identifies $\mathcal{X}$ with the coset space $G/K$ so that functions on $\mathcal{X}$ identify to right-$K$-invariant functions on $G$ and vice-versa. We say that a function $f$ on $\mathcal{X}$ is radial if its value at the vertex $x$ depends only on $|x|$. Using the above identification, it follows that radial functions on $\mathcal{X}$ correspond to $K$-bi-invariant functions on $G$ and vice-versa. For any function space $E(\mathcal{X})$ of $\mathcal{X}$, $E(\mathcal{X})^{\#}$ will  denote the subspace of all radial functions in $E(\mathcal{X})$. For a complex-valued function $f$ on $\mathcal{X}$, its radialization $\mathscr{R}f$ is defined by
\begin{equation}\label{radializationoperator}
\mathscr{R} f(x)=\int\limits_{K}f(k\cdot x)dk,
\end{equation}
where $dk$ is the normalized Haar measure on $K$. Clearly, for any $f$, $\mathscr{R}f$ is  radial  and $\mathscr{R}f=f$ whenever $f$ is a radial. For two finitely supported functions $f, g$ on $\mathcal{X}$, with  $g$ radial, it is known that (see \cite[Chapter 3]{FTP2})
\begin{equation}\label{radializationproperty}
\langle\mathscr{R} f,g\rangle=\langle f,g\rangle
\end{equation}

The boundary $\Omega$ is defined as the set of all infinite geodesic rays, that is, chain of consecutively adjacent vertices of the form $\{\omega_{0},\omega_{1},\ldots\}$ starting at $\omega_{0}=o$. For $x\in\mathcal{X}$ and $\omega\in\Omega$, we define the confluence point $c(x,\omega)=x_{l}$, where $x_{l}$ is the last point lying on $\omega$ in the geodesic path $\{o,x_{1},\ldots,x_{n}\}$ connecting $o$ to $x$. For each $x\in\mathcal{X}$, the sectors
\begin{equation}\label{sectors}
E(x)=\{\omega\in\Omega:|c(x,\omega)|=|x|\},
\end{equation}
are compact and form a basis for the topology of $\Omega$. Let us consider the measure $\nu$ on $\Omega$ which assigns the mass $(q+1)^{-1}q^{-n+1}$ to each of the subsets $E(x)$, where $|x|=n\neq 0$. Then $\nu$ can be extended to all Borel subsets of $\Omega$, as a probability measure (see \cite[Chapter 3]{FTP2}).

\subsection{The Laplacian and the Poisson transform}\label{laplacianpoissontransform}
The Laplacian $\mathcal{L}$ on $\mathcal X$  is defined by the formula
\begin{equation}\label{laplaciandefinition}
\mathcal{L}f(x)=f(x)-\frac{1}{q+1}\sum\limits_{y:d(x,y)=1}f(y),\quad\text{for all }x\in\mathcal{X}.
\end{equation}

For $z\in\mathbb{C}$, the Poisson transform $\mathcal{P}_{z}F$ of a suitable function $F$ on $\Omega$ is defined by (\cite[p. 53]{FTP2})
$$\mathcal{P}_{z}F(x)=\int\limits_{\Omega}p^{1/2+iz}(x,\omega)F(\omega)d\nu(\omega),\quad\text{for all }x\in\mathcal{X},$$
where $p(x,\omega)$ denotes  the Poisson kernel which can be explicitly written as (see \cite[Chapter 3]{FTP2})
$$p(x,\omega)=q^{h_{\omega}(x)},\quad\text{for all }x\in\mathcal{X}\text{ and for all }\omega\in\Omega,$$
and $h_{\omega}(x)$ denotes the height function which is given by $h_{\omega}(x)=2|c(x,\omega)|-|x|$ (see \cite{CS2}). For a fixed $\omega\in\Omega$ and $z\in\mathbb{C}$, the function $x\mapsto p^{1/2+iz}(x,\omega)$ satisfies $\mathcal{L}p^{1/2+iz}(x,\omega)=\gamma(z)p^{1/2+iz}(x,\omega)$ and hence $\mathcal{L} (\mathcal{P}_{z}F)(x)= \gamma(z)\mathcal{P}_{z}F(x)$, for all $x\in\mathcal{X}$, where $\gamma:\mathbb{C}\rightarrow\mathbb{C}$ is an entire function  defined as
\begin{equation}\label{gammaz}
\gamma(z)=1-\frac{q^{1/2+iz}+q^{1/2-iz}}{q+1}.
\end{equation}

For $z\in\mathbb{C}$, the elementary spherical function $\phi_z$ is defined as the Poisson transform of the constant function $\bf{1}$. It is known that $\phi_{z}$ is a radial eigenfunction of $\mathcal{L}$ with eigenvalue $\gamma(z)$ and every radial eigenfunction of $\mathcal{L}$ with eigenvalue $\gamma(z)$ is a constant multiple of $\phi_{z}$ (see \cite[Theorem 1]{FTP1}). The explicit formula for $\phi_{z}$ is as follows (see \cite[Theorem 2]{FTP1} and \cite[Chapter 3, Theorem 2.2]{FTP2}):
\begin{equation}\label{eqsf}
\phi_z(x)= \begin{cases}
\vspace*{.2cm} \left(\frac{q-1}{q+1}|x|+1\right)q^{-|x|/2},&\text{for all }z\in\ \tau\mathbb{Z},\\
\vspace*{.2cm}\left(\frac{q-1}{q+1}|x|+1\right)q^{-|x|/2}(-1)^{|x|},&\text{for all }z\in {\tau/2}+\tau\mathbb{Z},\\
\mathbf{c}(z)q^{{(iz-1/2)}|x|}+\mathbf{c}(-z)q^{{(-iz-1/2)}|x|},&\text{for all }z\in\mathbb{C}\setminus(\tau/2)\mathbb{Z},
\end{cases}
\end{equation}
where $\mathbf{c}$ is the meromorphic function (aka the Harish-Chandra's c-function) given by
\begin{equation}\label{harishchandra}
\mathbf{c}(z)=\frac{q^{1/2}}{q+1}\frac{q^{1/2+iz}-q^{-{1/2}-iz}}{q^{iz}-q^{-iz}},\quad\text{for all }z\in\mathbb{C}\setminus(\tau/2)\mathbb{Z}.
\end{equation}
It is straightforward to see that for every $x\in\mathcal{X}$, the map $z\mapsto\phi_{z}(x)$ is an entire function and $|\phi_{z}(x)|\leq 1$ for all $z\in S_{1}$. From (\ref{eqsf}), it also follows that $\phi_{z}=\phi_{-z}=\phi_{z+\tau}$ for all $z\in\mathbb{C}$, where $\tau$ denotes the number $2\pi/\log q$.

\subsection{Fourier transforms on \texorpdfstring{$\mathcal{X}$}{fouriertransformonx}}
The spherical Fourier transform $\widehat{f}$ of a finitely supported radial function $f$ on $\mathcal{X}$ is defined by
$$\widehat{f}(z)=\sum\limits_{x\in\mathcal{X}}f(x)\phi_{z}(x),\quad\text{ where }z\in\mathbb{C}.$$
Since the map $z\mapsto\phi_{z}$ is even and $\tau$-periodic in $\mathbb{C}$, hence so is $\widehat{f}$. The Helgason-Fourier transform $\widetilde{f}$ of a finitely supported function $f$ on $\mathcal{X}$ is a function on $\mathbb{C}\times\Omega$ defined by the formula
$$\widetilde{f}(z,\omega)=\sum\limits_{x\in\mathcal{X}}f(x)p^{1/2+iz}(x,\omega).$$
Clearly $\widetilde{f}(z,\omega)=\widetilde{f}(z+\tau,\omega)$ for every $z\in\mathbb{C}$. A simple computation shows that if $f$ is radial, its Helgason-Fourier transform becomes independent of the variable $\omega$ and $\widetilde{f}(z,\omega)=\widehat{f}(z)$. Furthermore, for two finitely supported functions $f_{1}, f_2$ on $\mathcal{X}$, with  $f_{2}$ radial,
\begin{equation}\label{fourierconvolution}
\widetilde{(f_{1}\ast f_{2})}(z,\omega)=\widetilde{f}(z,\omega)\widehat{f_{2}}(z).
\end{equation}

\subsection{Schwartz spaces on \texorpdfstring{$\mathcal{X}$}{schwartzspacesonx}}
For $1\leq p<2$, let  $\mathcal{S}_p(\mathcal{X})$ be the space of all functions $\phi$ on $\mathcal{X}$ for which
\begin{equation}\label{schwartzspace}
\nu_{p,m}(\phi)=\sup\limits_{x\in\mathcal{X}}(1+|x|)^{m}q^{\frac{|x|}{p}}|\phi(x)|<\infty,\quad\text{for all }m\in\mathbb{Z}_{+}.
\end{equation}
It is known that $\mathcal{S}_p(\mathcal{X})$ forms a Fr\'{e}chet space with respect to these countable seminorms $\nu_{p,m}(\cdot)$ and they are also known as the $p$-Schwartz spaces of rapidly decreasing functions on $\mathcal{X}$ (see, for example \cite{CMS}). For $1\leq p< 2$, we  define $\mathcal{H}(S_{p})^{\#}$ as the space of all such functions $\psi:S_{p}\rightarrow\mathbb{C}$ which satisfies the following properties:
\begin{enumerate}
\item[(a)] $\psi$ is even and $\tau$-periodic on $S_{p}$.
\item[(b)] $\psi$ is analytic on $S_{p}^{\circ}$.
\item[(c)] $\psi$ and all its derivatives extend continuously to $\partial S_{p}$.
\item[(d)] For every non-negative integer $m$,
$$\mu_{p,m}(\psi)=\sup\limits_{z\in S_{p}}\left|\frac{d^m}{dz^{m}}\psi(z)\right|<\infty.$$
\end{enumerate}
We shall use the following isomorphism theorem of Schwartz spaces with its image.
\begin{Theorem}\label{lp isom}
The map $\phi\mapsto\widehat{\phi}$ is a topological isomorphism from $\mathcal{S}_p(\mathcal{X})^{\#}$ onto $\mathcal{H}(S_{p})^{\#}$, for every $p\in[1,2)$. In particular, for every $m\in\mathbb{Z}_{+}$ and for every $p\in[1,2)$, there exist positive constants $C_{1}$ and $C_{2}$ such that
$$ C_{1}\mu_{p,m}(\widehat{\phi})\leq\nu_{p,m+4}(\phi)\leq C_{2} \max\{\mu_{p,m+4}(\widehat{\phi}),\mu_{p,0}(\widehat{\phi})\},\quad\text{for all }\phi\in\mathcal{S}_p(\mathcal{X})^{\#}.$$
\end{Theorem}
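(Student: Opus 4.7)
The plan is to prove the two inequalities of the theorem separately: the left establishes continuity of $\phi\mapsto\widehat\phi$, while the right, combined with the spherical inversion formula, gives continuity of the inverse map and bijectivity. A common technical ingredient is the pointwise estimate
$$\left|\tfrac{d^k}{dz^k}\phi_z(x)\right|\leq C_k(1+|x|)^{k+1}q^{-|x|/p'},\qquad z\in S_p,\;k\geq 0,$$
derived from (\ref{eqsf}) and (\ref{harishchandra}): away from the exceptional set $(\tau/2)\mathbb Z$, the $c$-function and its derivatives are bounded on $S_p$ and each differentiation of $q^{(\pm iz-1/2)|x|}$ introduces a factor of order $|x|\log q$; close to the removable singularities at $(\tau/2)\mathbb Z$, the alternative expressions in (\ref{eqsf}) together with a local Taylor expansion absorb the remaining $(1+|x|)$ factor. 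For the left inequality I would then differentiate $\widehat\phi(z)=\sum_{x\in\mathcal X}\phi(x)\phi_z(x)$ term by term (absolute convergence), insert the pointwise estimate and the bound $|\phi(x)|\leq\nu_{p,m+4}(\phi)(1+|x|)^{-m-4}q^{-|x|/p}$, and use $\#S(o,n)\asymp q^n$ together with $q^{-n/p-n/p'}=q^{-n}$ to obtain
$$\left|\tfrac{d^m}{dz^m}\widehat\phi(z)\right|\lesssim\nu_{p,m+4}(\phi)\sum_{n=0}^\infty(1+n)^{-3}\lesssim\nu_{p,m+4}(\phi),$$
uniformly in $z\in S_p$. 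Evenness and $\tau$-periodicity of $\widehat\phi$ pass from those of $\phi_z$, while holomorphy on $S_p^\circ$ with continuous extension of all derivatives to $\partial S_p$ follows from local uniform convergence of the differentiated series.

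For the right inequality my starting point is the spherical inversion formula $\phi(x)=c_q\int_{-\tau/2}^{\tau/2}\widehat\phi(t)\phi_t(x)|\mathbf c(t)|^{-2}dt$, which I would use to define a candidate inverse $\Lambda:\mathcal H(S_p)^\#\to C(\mathcal X)^\#$ by inserting $\psi$ in place of $\widehat\phi$. Expanding $\phi_t(x)$ via (\ref{eqsf}), using $|\mathbf c(t)|^2=\mathbf c(t)\mathbf c(-t)$ on the real axis, and exploiting evenness and $\tau$-periodicity of $\psi$ combines the two summands of $\phi_t/|\mathbf c|^2$ into a single integral
$$\Lambda\psi(x)=\widetilde c_q\int_{-\tau/2}^{\tau/2}\frac{\psi(t)}{\mathbf c(-t)}\,q^{(it-1/2)|x|}\,dt.$$
The integrand is $\tau$-periodic and holomorphic throughout $S_p^\circ$ (zeros of $\mathbf c(-t)$ lie at $(\tau/2)\mathbb Z-i/2\notin S_p^\circ$ for $p\in[1,2)$, while the real poles of $\mathbf c(-t)$ at $(\tau/2)\mathbb Z$ yield only removable zeros of the integrand). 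By Cauchy's theorem, with cancellation of vertical sides through $\tau$-periodicity, I can shift the contour from $\mathbb R$ to $\Im t=\delta_p$, on which $|q^{(it-1/2)|x|}|=q^{-|x|/p}$. To generate the polynomial factor $(1+|x|)^{-(m+4)}$, I integrate by parts $m+4$ times in $\Re t$---each step transfers a derivative onto $\psi/\mathbf c(-\cdot)$ at the cost of a factor $(i|x|\log q)^{-1}$, with boundary terms vanishing by $\tau$-periodicity---and then expand by Leibniz. A standard Landau--Kolmogorov interpolation bounds the intermediate derivatives $\mu_{p,j}(\psi)$, $0\leq j\leq m+4$, by $\max\{\mu_{p,0}(\psi),\mu_{p,m+4}(\psi)\}$, yielding
$$|\Lambda\psi(x)|\leq C(1+|x|)^{-(m+4)}\,q^{-|x|/p}\,\max\{\mu_{p,m+4}(\psi),\mu_{p,0}(\psi)\},$$
which is the right inequality; bijectivity of $\phi\mapsto\widehat\phi$ then follows from applying the inversion formula in both directions.

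The most delicate point will be the contour shift at $p=1$, where $\partial S_1$ lies exactly at $\Im t=\pm 1/2$ and the zeros of $\mathbf c(\pm t)$ sit at $(\tau/2)\mathbb Z\mp i/2$. Although the upward shift to $\Im t=+1/2$ does not cross these zeros, pushing the contour all the way to the boundary calls on the full strength of the hypothesis that $\psi$ and all its derivatives extend continuously to $\partial S_1$, together with careful uniform control of derivatives of $1/\mathbf c(-t)$ on that boundary. Correspondingly, sharpening the factor $(1+|x|)^{k+1}$ in the pointwise estimate for $\phi_z^{(k)}$ uniformly near the removable singularities $(\tau/2)\mathbb Z\subset S_p^\circ$ demands a careful local Taylor expansion of the combination $\mathbf c(z)q^{(iz-1/2)|x|}+\mathbf c(-z)q^{(-iz-1/2)|x|}$ at those points.
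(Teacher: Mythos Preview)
The paper does not give a self-contained argument here: it simply cites \cite[Theorem A.1]{R3} for $p\in(1,2)$ and asserts that the case $p=1$ is similar. Your sketch is the standard route to such Schwartz isomorphism theorems---pointwise derivative bounds on $\phi_z$ to control the forward map, then the inversion formula rewritten as a single oscillatory integral against $1/\mathbf c(-t)$, contour shift to $\Im t=\delta_p$, and repeated integration by parts for the inverse---and is correct; it almost certainly coincides with the argument in \cite{R3}. The only cosmetic points to tidy are the trivial case $|x|=0$ (handled directly, before any integration by parts) and making explicit that Landau--Kolmogorov is applied to the $\tau$-periodic restriction of $\psi$ to the line $\Im t=\delta_p$, whose sup-norms are dominated by the full $\mu_{p,j}(\psi)$.
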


\begin{proof}
For $p\in (1,2)$ it is proved in \cite[Theorem A.1]{R3} and for $p=1$ the proof is similar.
\end{proof}

\subsection{Tempered distributions on \texorpdfstring{$\mathcal{X}$}{tempereddistributionsonx}}
For $1\leq p< 2$, an $L^{p}$-tempered distribution on $\mathcal{X}$ is an element of the dual space $\mathcal{S}_p(\mathcal{X})^{\prime}$. More precisely, a linear functional $T:\mathcal{S}_p(\mathcal{X})\rightarrow\mathbb{C}$ is said to be an $L^{p}$-tempered distribution on $\mathcal{X}$ if $\langle T,\phi_{n}\rangle\rightarrow 0$ whenever $\nu_{p,m}(\phi_{n})\rightarrow 0$ as $n\rightarrow\infty$, for all $m\in\mathbb{Z}_{+}$. The distribution $T$ is said to be radial if
$$\langle T,\phi\rangle=\langle T,\mathscr{R} \phi\rangle,\quad\text{for all }\phi\in\mathcal{S}_p(\mathcal{X}).$$
The radialization of an $L^{p}$-tempered distribution $T$ is an $L^{p}$-tempered distribution defined by
$$\langle\mathscr{R}T,\phi\rangle=\langle T,\mathscr{R} \phi\rangle,\quad\text{for all }\phi\in\mathcal{S}_p(\mathcal{X}).$$
For a function $\phi$ on $\mathcal{X}$, its left translation by $g\in G$ is $\tau_{g}\phi(x)=\phi(g^{-1}\cdot x)$ and  $\tau_{g}$ of $T$ by an element $g\in G$ is defined as follows: If $\phi\in\mathcal{S}_p(\mathcal{X})$, then
$$\langle \tau_{g}T,\phi\rangle=T(\tau_{g^{-1}}\phi)=T\ast \phi^{\ast}(g^{-1}\cdot o),$$
where $\phi^{\ast}(g\cdot o)=\phi(g^{-1}\cdot o)$. Using Theorem \ref{lp isom}, the spherical Fourier transform $\widehat{T}$ of a radial $L^{p}$-tempered distribution $T$ is defined by the rule
$$\langle \widehat{T},\psi\rangle=\langle T,\phi\rangle,\text{ where }\psi\in\mathcal{H}(S_{p})^{\#}\text{ is such that }\widehat{\phi}=\psi.$$

\subsection{Characterization of Eigenfunctions as Poisson Transform}\label{characterizationsection}
We shall use the following result regarding the characterization of weak $L^{p}$-eigenfunctions of the Laplacian $\mathcal{L}$ as Poisson transform of $L^{p}$-functions defined on the boundary of $\mathcal{X}$.

\begin{Theorem}\label{weaklpchar}
Let $1\leq p<2$.  Suppose that $f$ is a complex-valued function on $\mathcal{X}$ and $z=\alpha+i\delta_{p^{\prime}}$, $\alpha\in\mathbb{R}$. Then $f=\mathcal{P}_{z}F$ for some $F\in L^{p^{\prime}}(\Omega)$ if and only if $f\in L^{p^{\prime},\infty}(\mathcal{X})$ and $\mathcal{L}f=\gamma(z)f$.
\end{Theorem}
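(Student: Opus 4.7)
The forward implication splits into two parts. The eigen-equation $\mathcal{L} f = \gamma(z) f$ follows by interchanging $\mathcal{L}$ with the boundary integral defining $\mathcal{P}_z F$ (permissible since $F \in L^{p'}(\Omega) \subset L^1(\Omega)$ and $p^{1/2+iz}(x,\cdot)$ is bounded on the compact space $\Omega$) and then applying $\mathcal L p^{1/2+iz}(\cdot,\omega) = \gamma(z) p^{1/2+iz}(\cdot,\omega)$. For the weak-$L^{p'}$ bound, the key observation is that on the horizontal line $\Im z = \delta_{p'}$ we have $|p^{1/2+iz}(x,\omega)| = p^{1/p}(x,\omega)$, independent of $\Re z$, so $|\mathcal{P}_z F(x)| \leq \mathcal{P}_{i\delta_{p'}}|F|(x)$. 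It then suffices to show the weak-type endpoint estimate $\mathcal{P}_{i\delta_{p'}} : L^{p'}(\Omega) \to L^{p',\infty}(\mathcal{X})$, which I would prove by splitting $\Omega$ into the confluence-level sets $\{\omega : |c(x,\omega)| = k\}$ for $k = 0,1,\ldots,|x|$, on which the kernel $q^{h_\omega(x)/p}$ is constant; summing the resulting H\"older bounds and invoking the explicit formula (\ref{eqsf}) for $\phi_{i\delta_{p'}}$ (which itself lies in $L^{p',\infty}(\mathcal X) \setminus L^{p'}(\mathcal X)$) yields the required weak-type estimate.

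The reverse implication is the substantive direction. Given $f \in L^{p',\infty}(\mathcal X)$ with $\mathcal L f = \gamma(z) f$, the first step is to check that $f$ defines an $L^{p'}$-tempered distribution: the Schwartz seminorm (\ref{schwartzspace}) embeds $\mathcal{S}_{p'}(\mathcal X)$ into the Lorentz space $L^{p,1}(\mathcal X)$, which pairs absolutely with $L^{p',\infty}(\mathcal X)$. Consequently one may convolve $f$ with any radial $\psi \in \mathcal{S}_p(\mathcal X)^{\#}$; since every radial $\mathcal L$-eigenfunction with eigenvalue $\gamma(z)$ is a multiple of $\phi_z$, the spectral identity
\begin{equation*}
f \ast \psi \;=\; \widehat{\psi}(z)\, f, \qquad \psi \in \mathcal{S}_p(\mathcal X)^{\#},
\end{equation*}
follows. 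Combined with Theorem \ref{lp isom}, this identifies the Helgason-Fourier transform of $f$ as concentrated at $z$ (modulo the Weyl-type symmetry $z \mapsto -z$), and the standard boundary representation theorem for $\mathcal L$-eigendistributions on trees produces a distribution $T$ on $\Omega$ with $f = \mathcal{P}_z T$.

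The final task is to upgrade $T$ to an $L^{p'}(\Omega)$ function, and this is where the weak-$L^{p'}$ hypothesis on $f$ becomes decisive. The plan is a duality argument: applying the forward-direction endpoint bound to the conjugate parameter shows that $\mathcal{P}_{-\overline{z}}$ maps $L^p(\Omega)$ into the Lorentz predual $L^{p,1}(\mathcal X)$ (after the usual density arguments), so the linear functional $\varphi \mapsto \langle f, \mathcal{P}_{-\overline{z}}\varphi\rangle$ on $L^p(\Omega)$ is bounded with norm $\leq C \|f\|_{p',\infty}$. By the Riesz representation theorem it is represented by some $F \in L^{p'}(\Omega)$, and a Fubini-type identity relating $\mathcal{P}_z$ to its adjoint combined with the spectral concentration above yields $f = \mathcal{P}_z F$. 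The main obstacle is making the Lorentz duality and the boundary representation interact in a compatible way: the inclusion $L^{p,1}(\mathcal X) \subset (L^{p',\infty}(\mathcal X))^*$ is proper, so care is needed to extract $F$ as a genuine $L^{p'}$ function rather than a more singular boundary distribution, and verifying $\mathcal{P}_z F = f$ requires the injectivity of $\mathcal{P}_z$ on the relevant distribution class.
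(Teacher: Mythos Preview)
The paper does not actually prove this theorem: it simply cites external references. For $1<p<2$ the proof is delegated to \cite[Theorem~B]{KR}, and for $p=1$ it is deduced from the Hardy-type characterization Theorem~\ref{hardypcase} (itself quoted from \cite{R4}) together with the identity $H^\infty_1(\mathcal X)=L^\infty(\mathcal X)$. So there is no argument to compare against beyond those citations; your proposal is an attempt to reconstruct what those references contain.

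Your forward direction is correct and is indeed the approach in \cite{KR}: reduce to $|\mathcal P_zF|\le\mathcal P_{i\delta_{p'}}|F|$ and establish the endpoint weak-type bound $\mathcal P_{i\delta_{p'}}:L^{p'}(\Omega)\to L^{p',\infty}(\mathcal X)$ via the confluence-level decomposition.

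For the reverse direction, your outline is broadly right in spirit but has two issues. First, a notational slip: you write that $f$ is an ``$L^{p'}$-tempered distribution'' lying in $\mathcal S_{p'}(\mathcal X)'$, but in this paper's conventions Schwartz spaces $\mathcal S_r(\mathcal X)$ are only defined for $1\le r<2$; what you need (and what Proposition~\ref{regulardistributions} supplies) is that $f\in L^{p',\infty}(\mathcal X)$ defines an element of $\mathcal S_p(\mathcal X)'$. Second, and more substantively, the duality route you sketch --- extracting $F\in L^{p'}(\Omega)$ from the functional $\varphi\mapsto\langle f,\mathcal P_{-\bar z}\varphi\rangle$ --- runs into exactly the obstacle you flag: $L^{p',\infty}$ is not the dual of $L^{p,1}$, so Riesz representation does not apply directly, and you do not indicate how to circumvent this. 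The argument in \cite{KR} instead proceeds by first obtaining a finitely additive boundary measure representing $f$ (as in the general eigenfunction theory on trees) and then showing, via a direct growth comparison using Lemma~\ref{lemmaweak}, that the weak-$L^{p'}$ bound on $f$ forces this measure to be absolutely continuous with $L^{p'}$ density. Your plan is not wrong, but the gap you yourself identify is real and would need a genuine additional idea to close.
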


\begin{proof}
When $p=1$, the result follows by putting $p=1$ and $r=\infty$ in Theorem \ref{hardypcase} (see Subsection \ref{remark on hardy type functions}), and further noticing that $H^{\infty}_{1}(\mathcal{X})=L^{\infty}(\mathcal{X})$ (since $\phi_{i\delta_{1}}(x)=1$ for all $x\in\mathcal{X}$). For $1<p<2$, we refer to \cite[Theorem B]{KR}.
\end{proof}


\section{Multiplier Operators on Schwartz Spaces}\label{Section 3}
Fix $1\leq p<2$. We denote by $Cv_{p}(\mathcal{X})$ the space of all radial functions $k$ on $\mathcal{X}$ for which the operator $\phi\mapsto \phi\ast k$ is bounded from $\mathcal{S}_p(\mathcal{X})$ to itself in the following sense: Corresponding to every semi-norm $\nu_{p,m_{2}}(\cdot)$ of $\mathcal{S}_p(\mathcal{X})$, there exists a semi-norm $\nu_{p,m_{1}}(\cdot)$ of $\mathcal{S}_p(\mathcal{X})$ and a constant $C_{m_{1},m_{2}}>0$ such that
\begin{equation}\label{multiplierboundedness1}
\nu_{p,m_{2}}(\phi\ast k)\leq C_{m_{1},m_{2}}~\nu_{p,m_{1}}(\phi),\quad\text{for all }\phi\in \mathcal{S}_p(\mathcal{X}).
\end{equation}
Further, we define $Cv_{p}^{\ast}(\mathcal{X})$ as the space of all radial functions $k$ on $\mathcal{X}$ for which the operator $\phi\mapsto \phi\ast k$ is bounded from $\mathcal{S}_p(\mathcal{X})$ to itself in the following sense: Corresponding to every semi-norm $\nu_{p,m}(\cdot)$ of $\mathcal{S}_p(\mathcal{X})$, there exists a constant $C_{m}>0$ such that
\begin{equation}\label{multiplierboundedness}
\nu_{p,m}(\phi\ast k)\leq C_{m}~\nu_{p,m}(\phi),\quad\text{for all }\phi\in \mathcal{S}_p(\mathcal{X}).
\end{equation}
Before getting into a formal definition of multipliers on Schwartz spaces, we investigate the following problems in this section:

\noindent{(a)} Firstly, we try to find a relation between the spaces $Cv_{p}(\mathcal{X})$ and $Cv_{p}^{\ast}(\mathcal{X})$.

\noindent{(b)} Secondly, we derive a necessary and sufficient condition on the function $k$ so that $k$ is in $Cv_{p}(\mathcal{X})$ and $Cv_{p}^{\ast}(\mathcal{X})$. In other words, we shall provide a characterization of the spaces $Cv_{p}(\mathcal{X})$ and $Cv_{p}^{\ast}(\mathcal{X})$ in terms of the behaviour of $k$.

\begin{Proposition}\label{lp_multiplier}
Let $1\leq p< 2$ and suppose that $k$ is a radial function on $\mathcal{X}$. Then the following are equivalent.
\begin{enumerate}
\item $k$ is in $\mathcal{S}_p(\mathcal{X})^{\#}$.
\item $k$ is in $Cv_{p}^{\ast}(\mathcal{X})$.
\item $k$ is in $Cv_{p}(\mathcal{X})$.
\end{enumerate}
\end{Proposition}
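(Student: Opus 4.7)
The plan is to establish the cyclic chain $(2)\Rightarrow(3)\Rightarrow(1)\Rightarrow(2)$; the first two arrows are essentially formal, while the analytic content is concentrated in $(1)\Rightarrow(2)$.

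The implication $(2)\Rightarrow(3)$ is tautological: the $Cv_{p}^{\ast}(\mathcal{X})$-estimate with the same index $m$ on both sides is the $Cv_{p}(\mathcal{X})$-estimate taken with $m_{1}=m_{2}=m$ and $C_{m_{1},m_{2}}=C_{m}$. For $(3)\Rightarrow(1)$, I would apply hypothesis $(3)$ to the test function $\phi=\delta_{o}$, the Kronecker delta at the reference vertex: a direct check gives $\nu_{p,m}(\delta_{o})=1$ for every $m\in\mathbb{Z}_{+}$, and the standard convolution identity $\delta_{o}\ast k=k$ (valid for radial $k$) together with the $Cv_{p}$-estimate yields $\nu_{p,m_{2}}(k)=\nu_{p,m_{2}}(\delta_{o}\ast k)\leq C_{m_{1},m_{2}}$ for every $m_{2}$, so $k\in\mathcal{S}_{p}(\mathcal{X})^{\#}$.

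The main work is $(1)\Rightarrow(2)$, which I would carry out by a direct pointwise estimate. Starting from the pointwise Schwartz bounds on $\phi$ and $k$, one has
$$|(\phi\ast k)(x)|\leq\nu_{p,m}(\phi)\,\nu_{p,M}(k)\sum_{y\in\mathcal{X}}(1+|y|)^{-m}(1+d(y,x))^{-M}q^{-(|y|+d(y,x))/p},$$
with $M$ to be chosen slightly larger than $m$. The plan is to reorganise this double sum by first fixing $n=d(y,x)$ and then parametrising $y\in S(x,n)$ via the tripod center of $\{o,x,y\}$: the tripod decomposition gives $|y|=\bigl||x|-n\bigr|+2s$ with approximately $q^{s}$ choices of $y$ for each $s$, so the inner sum in $s$ becomes a geometric series with ratio $q^{-2\delta_{p}}$, convergent precisely because $\delta_{p}=1/p-1/2>0$ when $p<2$. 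Splitting the remaining $n$-sum at $n=|x|$: the range $n\leq|x|$ is controlled by a standard polynomial convolution sum $\sum_{n=0}^{|x|}(1+n)^{-M}(1+|x|-n)^{-m}\lesssim (1+|x|)^{-m}$ (valid for $M\geq m+2$), while $n>|x|$ contributes another geometric series in $n-|x|$ with ratio $q^{-2\delta_{p}}$; both ranges combine to produce $(1+|x|)^{-m}q^{-|x|/p}$. Multiplying through by $(1+|x|)^{m}q^{|x|/p}$ and taking the supremum yields $\nu_{p,m}(\phi\ast k)\leq C_{m}\nu_{p,m}(\phi)$ with $C_{m}$ a constant multiple of $\nu_{p,m+2}(k)$, which is exactly the $Cv_{p}^{\ast}(\mathcal{X})$-estimate.

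The principal obstacle I anticipate is the spherical-sum estimate above: one must delicately balance the exponential volume growth $\#S(x,n)\asymp q^{n}$ of the tree against the exponential decay $q^{-|y|/p}$ built into the seminorms of $\mathcal{S}_{p}(\mathcal{X})$, and it is exactly the strict inequality $p<2$ that tips the balance through the convergence factor $q^{-2s\delta_{p}}$ emerging from the tripod enumeration. Once this geometric estimate is secured, the rest reduces to routine polynomial bookkeeping with standard convolution-of-polynomial-decays lemmas.
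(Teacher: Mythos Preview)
Your proposal is correct and follows essentially the same route as the paper: the implications $(2)\Rightarrow(3)$ and $(3)\Rightarrow(1)$ are handled identically (the latter via $\delta_{o}$), and for $(1)\Rightarrow(2)$ both arguments reduce to a direct pointwise estimate organised through the confluence/tripod geometry of $\{o,x,y\}$, with the crucial convergence coming from the geometric ratio $q^{1-2/p}=q^{-2\delta_{p}}$. The only cosmetic difference is bookkeeping: the paper pulls out $(1+|x|)^{-m}$ at the outset via the submultiplicativity $1+|x|\leq(1+|y|)(1+d(x,y))$ and then shows the residual sum is uniformly bounded, whereas you carry $(1+|y|)^{-m}$ through and recover $(1+|x|)^{-m}$ at the end via the discrete polynomial-convolution estimate; both choices lead to the same constant $C_{m}\asymp\nu_{p,m+2}(k)$.
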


\begin{proof}
First we shall show that $(1)$ implies $(2)$. Since $k$ is radial, for a finitely supported function $\phi$  on $\mathcal{X}$ we have,
\begin{equation}\label{convolution}
\phi\ast k(x)=\sum\limits_{y\in\mathcal{X}}\phi(y)~k(d(x,y)),\quad\text{for all }x\in\mathcal{X}.
\end{equation}
Taking modulus on both sides of the expression above, using (\ref{schwartzspace}) and the inequality
$$1+d(o,x)\leq (1+d(o,y))(1+d(y,x)),\quad\text{for all }x,y\in\mathcal{X},$$
we get, for all $m\in\mathbb{Z}_{+}$,
\begin{equation}\label{convolution1}
|\phi\ast k(x)|\leq \nu_{p,m}(\phi)~\nu_{p,m+2}(k)~(1+d(o,x))^{-m} \mathlarger{\mathlarger{\mathlarger\sum}}\limits_{y\in\mathcal{X}}\frac{q^{-(d(o,y)+d(y,x))/p}}{(1+d(y,x))^{2}}.
\end{equation}
Let $c(x,y)$ be the confluence point of $x,y\in \mathcal X$, that is, the last point in common between the geodesic paths connecting $o$ to $x$ and $o$ to $y$. 
Since $d(o,y)=d(o,c(x,y))+d(c(x,y),y)$ and $d(x,y)=d(x,c(x,y))+d(c(x,y),y)$,  we have,
\begin{equation}\label{confluenceformula}
d(o,y)+d(x,y)=d(o,x)+2d(c(x,y),y),\quad\text{for all }y\in\mathcal{X}.
\end{equation}
Plugging in the formula above in (\ref{convolution1}), we obtain
\begin{equation} \label{new-1}
|\phi\ast k(x)|\leq \nu_{p,m}(\phi)~\nu_{p,m+2}(k)~q^{-d(o,x)/p}(1+d(o,x))^{-m} \mathlarger{\mathlarger{\mathlarger\sum}}\limits_{y\in\mathcal{X}}\frac{q^{-2d(c(x,y),y)/p}}{(1+d(y,x))^{2}}.
\end{equation}
Since $c(x,y)$ is a point on the geodesic $\{x_{0},x_{1},\ldots,x_{|x|}\}$ connecting $o$ to $x$, we can partition $\mathcal{X}$ as a disjoint union of the sets
\begin{equation}\label{gjx}
G_{j}(x)=\{y\in\mathcal{X}:c(x,y)=x_{j}\},\quad\text{for all }j\in\{0,1,\ldots,|x|\},
\end{equation}
so that
\begin{equation}\label{convolution2}
\mathlarger{\mathlarger{\mathlarger\sum}}\limits_{y\in\mathcal{X}}\frac{q^{-2d(c(x,y),y)/p}}{(1+d(y,x))^{2}}=
\mathlarger{\mathlarger{\mathlarger\sum}}\limits_{j=0}^{d(o,x)} \mathlarger{\mathlarger{\mathlarger\sum}}\limits_{y\in G_{j}(x)}\frac{q^{-2d(x_{j},y)/p}}{(1+d(y,x))^{2}}.
\end{equation}
Our next target is to determine all possible values of $d(x_{j},y)$ and $d(y,x)$ for each $y$ in $G_{j}(x)$. For this purpose we further decompose $G_{j}(x)$ as a disjoint union of the sets
\begin{equation}\label{gjnx}
G_{j,n}(x)=\{y\in G_{j}(x):d(x_{j},y)=n\},\quad\text{for all }n\in\mathbb{Z}_{+}.
\end{equation}
Then observe that $d(o,y)=n+j$, which together with (\ref{confluenceformula}) gives us
\begin{equation}\label{confluence2}
d(x,y)=d(o,x)+n-j,\quad\text{for all }y\in G_{j,n}(x).
\end{equation}
Furthermore, it is easy to see that $\# G_{j,0}(x)=1$ for all $j$, and if $n>0$,
\begin{equation}\label{confluence3}
\# G_{j,n}(x)=\begin{cases}
q^{n}, & \text{if }j=0\text{ and }d(o,x),\\
(q-1)q^{n-1}, & \text{otherwise}.
\end{cases}
\end{equation}
Consequently, decomposing $G_{j}(x)$ as a disjoint union of $G_{j,n}(x)$ and then using the formula (\ref{confluence2}) and the estimate (\ref{confluence3}), we get from (\ref{convolution2}),
$$\mathlarger{\mathlarger{\mathlarger\sum}}\limits_{y\in\mathcal{X}}\frac{q^{-2d(c(x,y),y)/p}}{(1+d(y,x))^{2}}=\mathlarger{\mathlarger{\mathlarger\sum}}\limits_{j=0}^{d(o,x)}\mathlarger{\mathlarger{\mathlarger\sum}}\limits_{n=0}^{\infty}\mathlarger{\mathlarger{\mathlarger\sum}}\limits_{y\in G_{j,n}(x)}\frac{q^{-2d(x_{j},y)/p}}{(1+d(y,x))^{2}}\leq\mathlarger{\mathlarger{\mathlarger\sum}}\limits_{j=0}^{d(o,x)}
\mathlarger{\mathlarger{\mathlarger\sum}}\limits_{n=0}^{\infty}\frac{q^{(1-2/p)n}}{(1+d(o,x)+n-j)^{2}}.$$
Interchanging the summation and substituting $j$ by $d(o,x)-j$ we get,
$$\mathlarger{\mathlarger{\mathlarger\sum}}\limits_{y\in\mathcal{X}}\frac{q^{-2d(c(x,y),y)/p}}{(1+d(y,x))^{2}}\leq\mathlarger{\mathlarger{\mathlarger\sum}}\limits_{n=0}^{\infty}\mathlarger{\mathlarger{\mathlarger\sum}}\limits_{j=0}^{d(o,x)}\frac{q^{(1-2/p)n}}{(1+j+n)^{2}}\leq \mathlarger{\mathlarger{\mathlarger\sum}}\limits_{n=0}^{\infty}q^{(1-2/p)n}\mathlarger{\mathlarger{\mathlarger\sum}}\limits_{j=0}^{\infty}\frac{1}{(1+j)^{2}}\leq C,$$
where we have used the fact that $p<2$. In view of \eqref{new-1}, this shows that $(1)$ implies $(2)$.

It is evident from the definition that $(2)$ implies $(3)$. To prove that $(3)$ implies $(1)$, we consider the Dirac measure $\delta_{o}$ at
the reference point $o$. It is easy to verify that $\delta_{o}\in \mathcal{S}_p(\mathcal{X})^{\#}$, $\nu_{p,m_{1}}(\delta_{o})=1$, for all $1\leq p< 2$, $m_{1}\in\mathbb{Z}_{+}$ and  $\delta_{o}\ast k=k$. Consequently, using (\ref{multiplierboundedness1}) we have $\nu_{p,m_{2}}(k)\leq C_{m_{1},m_{2}}$ for every $m_{2}\in\mathbb{Z}_{+}$, which proves $(1)$.
\end{proof}

For a fixed radial function $k$ on $\mathcal{X}$, the operator $\phi\mapsto \phi\ast k$ defined from $\mathcal{S}_{p}(\mathcal{X})$ to itself and satisfying (\ref{multiplierboundedness}) will henceforth be referred to as a ``multiplier on $\mathcal{S}_{p}(\mathcal{X})$ with symbol $\kappa(z):=\widehat{k}(z)$'', and will be denoted by $\Lambda$. Furthermore,  a multiplier $\Lambda$ with symbol $\kappa(z)$ satisfies the following properties: For any $\phi\in \mathcal{S}_{p}(\mathcal{X})$,

\noindent{(a)} $\widetilde{\Lambda\phi}(z,\omega)=\widetilde{\phi}(z,\omega)\kappa(z)$ for $z\in S_{p}$ and $\omega\in\Omega$ (see (\ref{fourierconvolution})),

\noindent{(b)} $\Lambda$ commutes with the left-translation $\tau_{g}$, i.e., $\tau_{g}(\Lambda\phi)=\Lambda(\tau_{g}\phi)$ for $g\in G$,

\noindent{(c)} $\Lambda$ commutes with the radialization operator $\mathscr{R}$.

Only (c) may need an elaboration. We first observe that if $\phi\in\mathcal{S}_{p}(\mathcal{X})$, by definition $\Lambda\phi=\phi\ast k\in\mathcal{S}_{p}(\mathcal{X})$ and hence $\mathscr{R}(\Lambda \phi)\in\mathcal{S}_{p}(\mathcal{X})^{\#}$. By Theorem \ref{lp isom}, $\widehat{\mathscr{R}(\Lambda\phi)}\in \mathcal{H}(S_{p})^{\#}$. Now using (\ref{radializationproperty}) we have, for all $z\in S_{p}$,
$$\widehat{\mathscr{R}(\Lambda\phi)}(z)=\langle \mathscr{R}(\Lambda\phi),\phi_{z} \rangle=\langle \phi\ast k,\phi_{z} \rangle=\langle \phi,\phi_{z}\ast k \rangle=\langle \mathscr{R}\phi, \phi_{z}\ast k \rangle = \langle \mathscr{R}\phi\ast k, \phi_{z} \rangle.$$
Hence $\widehat{\mathscr{R}(\Lambda\phi)}(z)=\widehat{\Lambda(\mathscr{R}\phi)}(z)$. Since $\Lambda(\mathscr{R}\phi)\in\mathcal{S}_{p}(\mathcal{X})^{\#}$, the assertion follows from Theorem \ref{lp isom}.

The action of a multiplier $\Lambda$ on $\mathcal{S}_{p}(\mathcal{X})$ extends naturally to the $L^{p}$-tempered distributions. If $T\in\mathcal{S}_p(\mathcal{X})^{\prime}$, $\Lambda T$ is an $L^{p}$-tempered distribution  on $\mathcal{X}$ defined by
$$\langle\Lambda T,\phi\rangle=\langle T,\Lambda\phi\rangle,\quad\text{for all }\phi\in\mathcal{S}_p(\mathcal{X}).$$

\begin{example} \label{example-multi}
Here we recall some examples of multipliers on $\mathcal{S}_{p}(\mathcal{X})$.

\noindent{(i)}  The Laplace operator $\mathcal{L}$ on $\mathcal{X}$  is given by right convolution with the function $(\delta_{o}-\mu_{1})\in \mathcal{S}_{p}(\mathcal{X})^{\#}$, where $\delta_{o}$ denotes the Dirac measure at the reference point $o$ and $\mu_{1}$ is the uniformly distributed probability measure supported on the sphere $S(o,1)$. Thus $\mathcal{L}$ is a multiplier on $\mathcal{S}_{p}(\mathcal{X})$ with symbol $\gamma(z)$ (see subsection \ref{laplacianpoissontransform}).

\noindent{(ii)} More generally, if $\Psi$ is a nonconstant holomorphic  function defined on a connected open set containing $\gamma(S_{p})$ then $\Psi\circ\gamma\in \mathcal{H}(S_{p})^{\#}$ for all $1\leq p< 2$. Hence by Proposition \ref{lp_multiplier}, $\Psi\circ\gamma$ corresponds to a  multiplier on $\mathcal{S}_{p}(\mathcal{X})$, which will be denoted by $\Psi(\mathcal{L})$. The complex time heat operator $e^{\xi\mathcal{L}}$, $\xi\in \C^\times$, ball and sphere averages are particular cases of $\Psi(\mathcal L)$ for such functions $\Psi$. We shall deal with them in Section \ref{refined}.

\noindent{(iii)} Since $\mathcal{S}_{r}(\mathcal{X})^{\#}\subseteq \mathcal{S}_{p}(\mathcal{X})^{\#}$ whenever $1\leq r\leq p<2$, for each $k\in \mathcal{S}_{r}(\mathcal{X})^{\#}$, the operator $\Lambda f=f\ast k$ is a multiplier on $\mathcal{S}_{p}(\mathcal{X})$ with symbol $\widehat{k}(z)$.

\noindent{(iv)} If $k\in L^{r}(\mathcal{X})^{\#}$ for $1\leq r<p<2$ then \cite[Theorem 4.1]{KR} states that $\widehat{k}(z)$ exists for all $z\in S^{\circ}_{r}$. Moreover, using Fubini’s theorem, Morera’s theorem and the fact that $z\mapsto\phi_{z}(x)$ is entire, we have $\widehat{k}\in \mathcal{H}(S_{p})^{\#}$. By Theorem \ref{lp isom} and Proposition \ref{lp_multiplier}, $\Lambda f=f\ast k$ is a multiplier on $\mathcal{S}_{p}(\mathcal{X})$ with symbol $\widehat{k}(z)$.
\end{example}

For future use we record here a result confirming that the operator $f\mapsto f\ast k$ is bounded from $L^{p^{\prime},\infty}(\mathcal{X})$ to itself where $k\in\mathcal{S}_p(\mathcal{X})^{\#}$, for a fixed $1\leq p< 2$.

\begin{Proposition}\label{multiplieronfunctionspaces}
For any fixed  $1\leq p< 2$ and $k\in\mathcal{S}_p(\mathcal{X})^{\#}$, there exists a seminorm $\nu_{p,m}(\cdot)$ of $\mathcal{S}_p(\mathcal{X})$ and a constant $C>0$ such that
$$\|f\ast k\|_{L^{p^{\prime}, \infty}(\mathcal{X})}\leq C~\nu_{p,m}(k) \|f\|_{L^{p^{\prime}, \infty}(\mathcal{X})},\quad\text{for all }f\in L^{p^{\prime},\infty}(\mathcal{X}).$$
Consequently, $L^{p^{\prime},\infty}(\mathcal{X})\ast\mathcal{S}_p(\mathcal{X})^{\#}\subseteq L^{p^{\prime},\infty}(\mathcal{X})$.
\end{Proposition}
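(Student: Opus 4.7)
The plan is to combine a continuous embedding of the Schwartz space into a Lorentz space with a weak-type convolution inequality on the tree.

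First, I would establish $\mathcal{S}_p(\mathcal{X})^{\#} \hookrightarrow L^{p,1}(\mathcal{X})$ continuously: there exist $m \in \mathbb{Z}_{+}$ (any $m \geq 2$ suffices) and $C > 0$ such that $\|k\|_{L^{p,1}(\mathcal{X})} \leq C\,\nu_{p,m}(k)$ for every $k \in \mathcal{S}_p(\mathcal{X})^{\#}$. This is a direct computation of the distribution function. Writing $\nu = \nu_{p,m}(k)$, the Schwartz estimate $|k(x)| \leq \nu(1+|x|)^{-m}q^{-|x|/p}$ together with $\#S(o,n)\asymp q^n$ shows that the condition $|k(x)|>t$ forces $(1+|x|)^m q^{|x|/p} < \nu/t$, and hence
\[
d_k(t)\;\lesssim\;(\nu/t)^{p}\,\bigl(\log(\nu/t)\bigr)^{-mp}\qquad(0<t\ll \nu).
\]
The polynomial correction $(1+|x|)^{-m}$ translates into the logarithmic factor, which makes $\int_0^{\nu}d_k(t)^{1/p}\,dt$ convergent as soon as $m\geq 2$, giving the claimed embedding.

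Second, I would invoke the weak-type convolution inequality
\[
\|f\ast g\|_{L^{p',\infty}(\mathcal{X})}\leq C\,\|g\|_{L^{p,1}(\mathcal{X})}\,\|f\|_{L^{p',\infty}(\mathcal{X})},
\]
valid for $f\in L^{p',\infty}(\mathcal{X})$ and radial $g\in L^{p,1}(\mathcal{X})^{\#}$. For $p=1$ this is nothing but the classical Young inequality $L^{\infty}\ast L^{1}\hookrightarrow L^{\infty}$, since $L^{p,1}=L^{1}$ and $L^{p',\infty}=L^{\infty}$. For $1<p<2$ it is the Lorentz-space refinement of the Kunze-Stein phenomenon on homogeneous trees (in the spirit of Nebbia and Cowling--Meda--Setti, and compatible with the analysis of \cite{KR}). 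Applied to $g=k$ and combined with the first step, this yields $\|f\ast k\|_{L^{p',\infty}}\leq C\,\nu_{p,m}(k)\,\|f\|_{L^{p',\infty}}$, as required. The consequence $L^{p',\infty}\ast\mathcal{S}_p^{\#}\subseteq L^{p',\infty}$ then follows.

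The main obstacle is the weak-type convolution inequality for $1<p<2$. A straightforward H\"older--Lorentz estimate readily yields the pointwise bound $|f\ast k(x)|\leq \|f\|_{L^{p',\infty}}\|k\|_{L^{p,1}}$, but this only gives an $L^{\infty}$-control of $f\ast k$; because $\mathcal{X}$ has infinite counting measure and $L^{\infty}\not\hookrightarrow L^{p',\infty}$, such a pointwise bound is not sufficient on its own. Upgrading to a genuine weak-$L^{p'}$ bound forces one to use the tree-specific Kunze-Stein convolution inequality, or an equivalent spectral-theoretic argument exploiting the fact that $\widehat{k}$ is bounded and analytic on the strip $S_p$ (so that the convolver extends to a multiplier on the scale $L^{r}$ for $r\in[p,p']$).
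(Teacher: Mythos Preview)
Your proposal is correct and follows essentially the same route as the paper: embed $\mathcal{S}_p(\mathcal{X})^{\#}$ into $L^{p,1}(\mathcal{X})$ (or $L^1$ when $p=1$), then invoke Young's inequality for $p=1$ and the Lorentz-space Kunze--Stein phenomenon together with duality for $1<p<2$. The only cosmetic difference is that the paper obtains the embedding $\|k\|_{L^{p,1}}\le C\,\nu_{p,2}(k)$ by quoting \cite[Lemma~1]{P}, whereas you sketch it directly via the distribution function; both give the same seminorm $\nu_{p,2}$.
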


\begin{proof}
We first consider the case $p=1$. If $k\in\mathcal{S}_1(\mathcal{X})^{\#}$, using (\ref{schwartzspace}) and the fact $\#S(o,n)\asymp q^{n}$, it follows that $\|k\|_{L^{1}(\mathcal{X})}\leq \nu_{1,2}(k)$. The desired result now follows by using Young's inequality. When $1<p<2$ and $k\in\mathcal{S}_p(\mathcal{X})^{\#}$, using \cite[Lemma 1]{P} we have $\|k\|_{L^{p,1}(\mathcal{X})}\leq C\nu_{p,2}(k)$. The proof now follows from the generalized Kunze-Stein phenomenon (see \cite{CMS}) and standard duality arguments.
\end{proof}

\section{Strichartz's theorem for multipliers}\label{pcaseroestrichartz}
In this section we consider analogues of Theorem \ref{th2} of  Strichartz for multipliers on $\mathcal{S}_{p}(\mathcal{X})$ for $1\leq p<2$. The following proposition is a crucial step towards this goal.

\begin{Proposition}\label{generalizedproposition}
Let $1\leq p< 2$. Let $\Lambda$ be a multiplier on $\mathcal{S}_{p}(\mathcal{X})$. Suppose that $T$ is an $L^{p}$-tempered distribution on $\mathcal{X}$ which is of the form
\begin{equation}\label{generalized1}
T=T_{1}+T_{2}+\cdots+T_{j},
\end{equation}
for some $L^{p}$-tempered distributions $T_{1},T_{2},\ldots,T_{j}$ on $\mathcal{X}$ that satisfy $(\Lambda-A_{i}I)^{N}T_{i}=0$ for all $1\leq i\leq j$ and for some $N\in\mathbb{N}$, where $A_{i}$'s are distinct complex numbers. Then
\begin{equation}\label{generalized2}
(\Lambda-A_{1}I)^{N}(\Lambda-A_{2}I)^{N}\cdots(\Lambda-A_{j}I)^{N}T=0.
\end{equation}
Conversely, if an $L^{p}$-tempered distribution $T$ satisfies (\ref{generalized2}) for some $N\in\mathbb{N}$, where $A_{i}$'s are distinct complex numbers, then there exist $L^{p}$-tempered distributions $T_{1},T_{2},\ldots,T_{j}$ on $\mathcal{X}$ such that $(\Lambda-A_{i}I)^{N}T_{i}=0$, for all $1\leq i\leq j$ and $T$ is of the form (\ref{generalized1}). Moreover, for $N=1$, the representation (\ref{generalized1}) is unique.
\end{Proposition}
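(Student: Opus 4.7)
The plan is to reduce everything to a polynomial identity in the commutative algebra generated by $\Lambda$. The preparatory observations are: (i) $\mathcal{S}_p(\mathcal{X})^{\#}$ is a commutative convolution algebra (radial convolution on homogeneous trees commutes), and Proposition \ref{lp_multiplier} then guarantees that the multipliers on $\mathcal{S}_{p}(\mathcal{X})$ form a commutative algebra in which every polynomial $P(\Lambda)$ is again a multiplier; (ii) each such polynomial operator extends to $\mathcal{S}_p(\mathcal{X})^{\prime}$ by the transpose rule $\langle P(\Lambda)T,\phi\rangle=\langle T,P(\Lambda)\phi\rangle$, and the extended operators still commute. The forward direction follows immediately: if $T=T_{1}+\cdots+T_{j}$ with $(\Lambda-A_{i}I)^{N}T_{i}=0$, then commuting the factors lets $(\Lambda-A_{i}I)^{N}$ act first on the $i$-th summand of $\prod_{k=1}^{j}(\Lambda-A_{k}I)^{N}T=\sum_{i}\prod_{k=1}^{j}(\Lambda-A_{k}I)^{N}T_{i}$, annihilating each term.

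For the converse I would use a Bezout-type partial fraction argument. Since the $A_{i}$ are distinct, the polynomials $(z-A_{i})^{N}$ are pairwise coprime in $\mathbb{C}[z]$, so the Chinese remainder theorem yields polynomials $Q_{1},\ldots,Q_{j}\in\mathbb{C}[z]$ with
$$\sum_{i=1}^{j} Q_{i}(z)\prod_{k\neq i}(z-A_{k})^{N}=1.$$
Substituting $\Lambda$ for $z$ and setting $P_{i}(\Lambda):=\prod_{k\neq i}(\Lambda-A_{k}I)^{N}$ gives the operator identity $\sum_{i=1}^{j}Q_{i}(\Lambda)P_{i}(\Lambda)=I$ on $\mathcal{S}_p(\mathcal{X})^{\prime}$. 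Defining $T_{i}:=Q_{i}(\Lambda)P_{i}(\Lambda)T$, which is automatically an $L^{p}$-tempered distribution by (ii), yields $T=\sum_{i}T_{i}$, and
$$(\Lambda-A_{i}I)^{N}T_{i}=Q_{i}(\Lambda)\prod_{k=1}^{j}(\Lambda-A_{k}I)^{N}T=0$$
by the hypothesis on $T$, which is the required decomposition.

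For uniqueness when $N=1$, suppose $T=\sum_{i}T_{i}=\sum_{i}T_{i}^{\prime}$ are two decompositions with $(\Lambda-A_{i}I)T_{i}=(\Lambda-A_{i}I)T_{i}^{\prime}=0$, and set $U_{i}:=T_{i}-T_{i}^{\prime}$. Then $\Lambda U_{i}=A_{i}U_{i}$ and $\sum_{i}U_{i}=0$. Fix any index $i_{0}$ and apply the operator $\prod_{k\neq i_{0}}(\Lambda-A_{k}I)$ to the identity $\sum_{i}U_{i}=0$. Each $U_{i}$ with $i\neq i_{0}$ is killed by its matching factor $(\Lambda-A_{i}I)$, while $U_{i_{0}}$ is mapped to the nonzero scalar multiple $\bigl(\prod_{k\neq i_{0}}(A_{i_{0}}-A_{k})\bigr)U_{i_{0}}$, forcing $U_{i_{0}}=0$. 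Since $i_{0}$ was arbitrary, $T_{i}=T_{i}^{\prime}$ for all $i$.

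The only genuine obstacle I anticipate is the bookkeeping in (i) and (ii): one must verify that the polynomial operators in $\Lambda$ behave as expected on $L^{p}$-tempered distributions, in particular that $Q_{i}(\Lambda)P_{i}(\Lambda)T$ really lies in $\mathcal{S}_p(\mathcal{X})^{\prime}$ and that the various factors continue to commute after transposition. Both are routine consequences of Proposition \ref{lp_multiplier} together with the commutativity of convolution of radial kernels on $\mathcal{X}$.
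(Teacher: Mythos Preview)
Your proof is correct and follows essentially the same strategy as the paper: both construct polynomial projections $P_i(\Lambda)$ summing to the identity, with $P_i$ divisible by $\prod_{k\neq i}(z-A_k)^N$, and set $T_i=P_i(\Lambda)T$. The differences are in packaging. For the converse, the paper builds the polynomials $P_i$ explicitly via Hermite interpolation conditions $(\mathscr{C}1)$--$(\mathscr{C}2)$ and invokes non-singularity of the confluent Vandermonde matrix to guarantee existence and uniqueness of such $P_i$; your Chinese-remainder/Bezout formulation gives the same polynomials with less bookkeeping. For uniqueness at $N=1$, the paper applies $\Lambda^{k}$ for $k=0,\dots,j-1$ and solves the resulting ordinary Vandermonde system, whereas you apply the single operator $\prod_{k\neq i_0}(\Lambda-A_kI)$ to isolate $U_{i_0}$ directly; your route is a bit shorter. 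Both approaches rest on the same commutative-algebra fact and on Proposition~\ref{lp_multiplier} to ensure each $P_i(\Lambda)$ is again a multiplier acting on $\mathcal{S}_p(\mathcal{X})'$.
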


\begin{proof}
Fix $1\leq p\leq 2$ and let $\Lambda$ be a multiplier on $\mathcal{S}_{p}(\mathcal{X})$. Assume that $T$ is of the form (\ref{generalized1}) for some $L^{p}$-tempered distributions $T_{1},T_{2},\ldots,T_{j}$ which satisfy the hypothesis of this lemma. Then for every $\phi\in \mathcal{S}_{p}(\mathcal{X})$,
\begin{align*}
\langle (\Lambda-A_{1}I)^{N}\cdots(\Lambda-A_{j}I)^{N}T,\phi \rangle&=\sum\limits_{i=1}^{j} \langle (\Lambda-A_{1}I)^{N}\cdots(\Lambda-A_{j}I)^{N}T_{i},\phi \rangle\\
&=\sum\limits_{i=1}^{j}\langle T_{i},(\Lambda-A_{i}I)^{N}\phi_{i}\rangle\\
&=\sum\limits_{i=1}^{j}\langle (\Lambda-A_{i}I)^{N}T_{i},\phi_{i}\rangle=0,
\end{align*}
where, for $1\leq i\leq j$, $\phi_{i}:=(\Lambda-A_{1}I)^{N}\cdots(\Lambda-A_{i-1}I)^{N}(\Lambda-A_{i+1}I)^{N}\cdots(\Lambda-A_{j}I)^{N}\phi$ is in $\mathcal{S}_{p}(\mathcal{X})$, since $\Lambda$ is a multiplier on $\mathcal{S}_{p}(\mathcal{X})$ (see Proposition \ref{lp_multiplier}).

Conversely, we assume that $T$ is an $L^{p}$-tempered distribution on $\mathcal{X}$ which satisfies (\ref{generalized2}) for some $N\in\mathbb{N}$, and for some distinct complex numbers $A_{1},A_{2},\ldots,A_{j}$. Our strategy is to construct, for each $i$, an operator $\mathbf{P_{i}}$ which acts as a projection from the space of all $L^{p}$-tempered distributions on $\mathcal{X}$ satisfying  (\ref{generalized2}) onto the generalized eigenspace $\{T^{\prime}\in\mathcal{S}_{p}(\mathcal{X})^{\prime}:(\Lambda-A_{i}I)^{N}T^{\prime}=0\}$, and satisfies
\begin{equation}\label{generalizedconstruction1}
\mathbf{P_{1}}+\mathbf{P_{2}}+\cdots+\mathbf{P_{j}}=\mathbf{I}.
\end{equation}
For this purpose, for each $i\in\{1,\ldots,j\}$, we seek for a polynomial of the form
$$P_{i}(z)=a_{0}+a_{1}z+\cdots+a_{jN-1}z^{jN-1},\quad\text{where }z\in\mathbb{C},$$
which satisfies the following conditions:
\begin{enumerate}
\item[$(\mathscr{C}1)$] For all $k\in\{1,\ldots,i-1,i+1,\ldots j\}$ and for all $m\in\{0,\ldots,N-1\}$, $P^{(m)}_{i}(A_{k})=0$, where $P^{(0)}_{i}(A_{k})=P_{i}(A_{k})$, and for $m\geq 1$, $P^{(m)}_{i}(A_{k})$ denotes the $m$-th derivative of $P_{i}$ at $A_{k}$.
\item[$(\mathscr{C}2)$] When $k=i$, $P_{i}(A_{i})=1$ and $P^{(m)}_{i}(A_{i})=0$ for all $m\in\{1,\ldots,N-1\}$.
\end{enumerate}
We note that conditions $(\mathscr{C}1)$ and $(\mathscr{C}2)$ give rise to a system of $jN$ linear equations with unknowns $a_{0},a_{1},\ldots,a_{jN-1}$. In fact, we can rewrite this system as
$$\begin{blockarray}{c(cccc)}
P_i(A_1)=0 & 1 & A_{1} & \cdots &A^{jN-1}_{1} \\
\vdots&\vdots&\vdots&\cdots& \vdots \\
P_i(A_i)=1 & 1 & A_{i} & \cdots &A^{jN-1}_{i}\\
\vdots&\vdots&\vdots&\cdots& \vdots \\
P_i(A_j)=0 & 1 & A_{j} & \cdots &A^{jN-1}_{j}\\[4mm]
\cline{2-5}\\
P^{(1)}_i(A_1)=0 & 0 & 1 & \cdots &(jN-1)A^{jN-2}_{1} \\
\vdots&\vdots&\vdots&\cdots& \vdots \\
P^{(1)}_i(A_j)=0 & 0 & 1 & \cdots & (jN-1)A^{jN-2}_{j}\\[4mm]
\cline{2-5}\\
\vdots & \vdots & \vdots &  & \vdots\\
\vdots & \vdots & \vdots & & \vdots\\[4mm]
\cline{2-5}\\
P^{(N-1)}_i(A_1)=0 & 0 & 0 & \cdots &(jN-1)\cdots(N(j-1)+1)A^{jN-N}_{1} \\
\vdots&\vdots&\vdots&\cdots& \vdots \\
P^{(N-1)}_i(A_j)=0 & 0 & 0 & \cdots & (jN-1)\cdots(N(j-1)+1)A^{jN-N}_{j}
\end{blockarray}~\left(\begin{array}{c}
a_{0}\\[18mm]
a_{1}\\[18mm]
\vdots\\[18mm]
\vdots\\[19mm]
a_{jN-1}
\end{array}\right)=\left(\begin{array}{c}
0\\
\vdots\\
1\\
\vdots\\
0\\[10mm]
0\\
\vdots\\
0\\[10mm]
\vdots\\
\vdots\\[10mm]
0\\
\vdots\\
0
\end{array}\right).$$
The above co-efficient matrix is also known as the confluent Vandermonde matrix which is non-singular, since the complex numbers $A_{i}$'s are distinct. See \cite[Section 5]{SA} for details. Consequently, we get a unique polynomial $P_{i}$ of degree less than or equal to $jN-1$ satisfying $(\mathscr{C}1)$ and $(\mathscr{C}2)$. On the other hand, by using conditions $(\mathscr{C}1)$ and $(\mathscr{C}2)$, it follows immediately that deg($P_{i}$)$\geq (j-1)N$ and hence, the polynomial $P_{i}$ can be uniquely written as
\begin{equation}\label{generalizedconstruction2}
P_{i}(z)=(z-A_{1})^{N}\cdots(z-A_{i-1})^{N}(z-A_{i+1})^{N}\cdots(z-A_{j})^{N}Q_{i}(z),
\end{equation}
where $Q_{i}$ is a unique polynomial of degree at most $N-1$ and
\begin{equation}\label{generalizedconstruction3}
Q_{i}(A_{i})=(A_{i}-A_{1})^{-N}\cdots(A_{i}-A_{i-1})^{-N}(A_{i}-A_{i+1})^{-N}\cdots(A_{i}-A_{j})^{-N}\neq 0.
\end{equation}
We claim that $P_{1}+P_{2}+\cdots+P_{j}=1$. To prove this, let us consider 
\begin{equation}\label{generalizedconstruction4}
P(z)=P_{1}(z)+P_{2}(z)+\cdots+P_{j}(z)-1.
\end{equation}
Then $P$ is a polynomial of degree at most $jN-1$. On the other hand, using conditions $(\mathscr{C}1)$ and $(\mathscr{C}2)$, it follows that $P^{(m)}(A_{i})=0$ for all $i\in\{1,\ldots,j\}$ and for all $m\in\{0,\ldots,N-1\}$. In other words, $P$ has $jN$ roots of the form $A_{1},A_{2},\ldots,A_{j}$, each of multiplicity $N$, which is only possible if $P$ is the zero polynomial. Now it is easy to see that the projection operator $\mathbf{P_{i}}$ defined by
\begin{equation}\label{projectionoperators}
\mathbf{P_{i}}=P_{i}(\Lambda)=(\Lambda-A_{1}I)^{N}\cdots(\Lambda-A_{i-1}I)^{N}(\Lambda-A_{i+1}I)^{N}\cdots(\Lambda-A_{j}I)^{N}Q_{i}(\Lambda),
\end{equation}
where $P_{i}$ and $Q_{i}$ is as in (\ref{generalizedconstruction2}) and (\ref{generalizedconstruction3}) respectively, satisfies (\ref{generalizedconstruction1}) (since $P$ defined by (\ref{generalizedconstruction4}) is a zero polynomial). Furthermore, we define
\begin{equation}\label{decompositionoftempered}
\langle T_{i},\phi\rangle=\langle\mathbf{P_{i}}T,\phi\rangle=\langle T,\mathbf{P_{i}}\phi\rangle,\quad\text{for each }i=1,\ldots,j,\text{ where }\phi\in\mathcal{S}_{p}(\mathcal{X}).
\end{equation}
Since $\Lambda$ is a multiplier on $\mathcal{S}_{p}(\mathcal{X})$, using Proposition \ref{lp_multiplier}, we have $\nu_{p,m}(\mathbf{P_{i}}\phi_{n})\rightarrow 0$ whenever $\nu_{p,m}(\phi_{n})\rightarrow 0$ as $n\rightarrow\infty$, for all $m\in\mathbb{Z}_{+}$ and for all $i$. This inturn implies that $T_{i}$ is an $L^{p}$-tempered distribution on $\mathcal{X}$. Combining (\ref{generalized2}), (\ref{projectionoperators}) and (\ref{decompositionoftempered}) it follows that $(\Lambda-A_{i}I)^{N}T_{i}=0$. Moreover, using (\ref{generalizedconstruction1}) and (\ref{decompositionoftempered}) we have $T=T_{1}+T_{2}+\cdots+T_{j}$.

Finally, we shall prove that if $T$ is an $L^{p}$-tempered distribution satisfying (\ref{generalized2}) for $N=1$, then $T$ can be uniquely decomposed in the form (\ref{generalized1}), where for each $i$, $\Lambda T_{i}=A_{i}T_{i}$. Seeking a contradiction, we assume that there exists two different sets of $L^{p}$-tempered distributions, namely $\{T_{i}:1\leq i\leq j\}$ and $\{T^{\prime}_{i}:1\leq i\leq j\}$ that satisfies the above properties. We can certainly assume that $T_{i}\neq T^{\prime}_{i}$ for all $i$. Then one sees immediately that
$$A_{1}^{k}(T_{1}-T^{\prime}_{1})+A_{2}^{k}(T_{2}-T^{\prime}_{2})+\cdots+A_{j}^{k}(T_{j}-T^{\prime}_{j})=0,\quad\text{for all } k=0,1,\ldots,j-1,$$
which results in a system of $j$ linear equations and the associated matrix
$$\left(\begin{array}{cccc}
1 & 1 & \cdots & 1\\
A_{1} & A_{2} & \cdots & A_{j}\\
\vdots & \vdots & \ddots & \vdots\\
A^{j-1}_{1} & A^{j-1}_{2} & \cdots & A^{j-1}_{j}
\end{array}\right)$$
is non-singular, as $A_{i}'$s are all distinct. Therefore $T_{i}-T^{\prime}_{i}=0$ for all $i$, which contradicts our assumption. This completes the proof.
\end{proof}

\begin{Remark}\label{remark-on-weak-lp}
Following are some observations which we record for future use:

\noindent{\textbf 1.} In Proposition \ref{generalizedproposition} if we assume that (\ref{generalized2}) holds for a weak $L^{p}$-function $f$ instead of a $L^{p}$-tempered distribution $T$ on $\mathcal{X}$ then we have the decomposition $f=f_{1}+\cdots+f_{j}$, for some weak $L^{p}$-functions $f_{i}$ satisfying $(\Lambda-A_{i}I)^{N}f_{i}=0$, for all $i=1,\ldots,j$. To see this, we first define
\begin{equation}\label{functiondecomposition}
f_{i}=\mathbf{P_{i}}f,\quad\text{for all }i=1,\ldots,j,
\end{equation}
where $\mathbf{P_{i}}$ is as in (\ref{projectionoperators}). Since $\mathbf{P_{i}}$ satisfies (\ref{generalizedconstruction1}), we have the desired decomposition. The fact that $(\Lambda-A_{i}I)^{N}f_{i}=0$ follows by combining (\ref{generalized2}) (with $f$ in place of $T$), (\ref{projectionoperators}) and (\ref{functiondecomposition}). Finally, $f_{i}\in L^{p^{\prime},\infty}(\mathcal{X})$ is a consequence of (\ref{functiondecomposition}) and Proposition \ref{multiplieronfunctionspaces}. Moreover, if (\ref{generalized2}) holds for $N=1$ then a step by step adaptation of the proof of Proposition \ref{generalizedproposition} implies that the decomposition $f=f_{1}+\cdots+f_{j}$ is unique.

\noindent{\textbf 2.} If in addition, we assume that the $L^{p}$-tempered distribution $T$ (in Proposition \ref{generalizedproposition}) and the weak $L^{p}$-function $f$ (in Remark \ref{remark-on-weak-lp} (1)) are radial, then $T_{i}$ and $f_{i}$ as in (\ref{decompositionoftempered}), (\ref{functiondecomposition}) respectively, are also radial.
\end{Remark}

We shall  also use of the following lemma which can be proved in much the same way as explained in \cite[p. 8]{HR2} and \cite[p. 130]{R3}. We omit its proof for brevity. 
\begin{Lemma}\label{generalizedtoeigenfunction}
Let $1\leq p<2$ and suppose that $\Lambda$ is a multiplier on $\mathcal{S}_{p}(\mathcal{X})$. Let $\{T_{k}\}_{k\in\mathbb{Z}_{+}}$ be an infinite sequence of radial $L^{p}$-tempered distributions on $\mathcal{X}$ satisfying, for all $k\in\mathbb{Z}_{+}$, $\Lambda T_{k}=A~T_{k+1}$ for some $A\in\C^\times$, and $|\langle T_{k},\phi\rangle|\leq M\nu_{p,m}(\phi)$ for a fixed semi-norm $\nu_{p,m}(\cdot)$ on $\mathcal{S}_{p}(\mathcal{X})$, for some $M>0$. If $(\Lambda-BI)^{N+1}T_{0}=0$ for some $N\in\mathbb{N}$ and for some $B\in\mathbb{C}^{\times}$ with $|A|=|B|$, then $(\Lambda-BI)T_{0}=0$.
\end{Lemma}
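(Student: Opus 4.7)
My plan is to iterate the recursion $\Lambda T_k = A T_{k+1}$ to obtain $T_k = A^{-k}\Lambda^k T_0$, and then use the fact that $T_0$ is annihilated by $(\Lambda - BI)^{N+1}$ to expand $\Lambda^k T_0$ as a finite sum via the binomial theorem around $BI$. Concretely, writing $\Lambda^k = \bigl(BI + (\Lambda-BI)\bigr)^k$ and noting that every term with exponent $j\geq N+1$ on $(\Lambda-BI)$ annihilates $T_0$, one gets
$$T_k \;=\; A^{-k}\sum_{j=0}^{N}\binom{k}{j}B^{k-j}(\Lambda-BI)^{j}T_0 \;=\; \Bigl(\tfrac{B}{A}\Bigr)^{k}\sum_{j=0}^{N}\binom{k}{j}B^{-j}(\Lambda-BI)^{j}T_0.$$

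Next, I would test against an arbitrary $\phi\in\mathcal{S}_p(\mathcal{X})$. Since $\Lambda$ is a multiplier, the powers $(\Lambda-BI)^{j}\phi$ belong to $\mathcal{S}_p(\mathcal{X})$, and by definition of the action of $\Lambda$ on distributions,
$$\Bigl(\tfrac{A}{B}\Bigr)^{k}\langle T_k,\phi\rangle \;=\; \sum_{j=0}^{N}\binom{k}{j}\,a_j(\phi),\qquad\text{where } a_j(\phi):=B^{-j}\bigl\langle T_0,(\Lambda-BI)^{j}\phi\bigr\rangle.$$
The hypothesis $|A|=|B|$ makes $(A/B)^{k}$ unimodular, so the uniform seminorm bound $|\langle T_k,\phi\rangle|\leq M\nu_{p,m}(\phi)$ implies that the right-hand side, viewed as a function of $k\in\mathbb{Z}_+$, is bounded by $M\nu_{p,m}(\phi)$.

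The key observation is that the right-hand side is a polynomial in $k$ of degree at most $N$, because $\binom{k}{j}=k(k-1)\cdots(k-j+1)/j!$ has degree $j$ in $k$. A polynomial in $k$ that stays bounded as $k\to\infty$ must be constant; comparing coefficients (starting from the leading one) forces $a_j(\phi)=0$ for every $j=1,\dots,N$. In particular, $a_1(\phi)=B^{-1}\langle T_0,(\Lambda-BI)\phi\rangle=B^{-1}\langle(\Lambda-BI)T_0,\phi\rangle=0$ for every $\phi\in\mathcal{S}_p(\mathcal{X})$, and since $B\neq 0$ we conclude $(\Lambda-BI)T_0=0$.

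The only delicate point is verifying that this polynomial-boundedness argument is indeed the intended route and that the bound $M\nu_{p,m}(\phi)$ is uniform in $k$ with a single seminorm (which is exactly what the hypothesis states). The rest is just the binomial identity and the Vandermonde-type observation that a bounded polynomial is constant; everything else — the fact that $(\Lambda-BI)^{j}\phi\in\mathcal{S}_p(\mathcal{X})$, that distributional duality exchanges $\Lambda$ between sides, and that $B/A$ is unimodular — is immediate from Proposition~\ref{lp_multiplier}, the definition of $\Lambda T$, and the assumption $|A|=|B|$ respectively.
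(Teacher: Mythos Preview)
Your proof is correct and follows precisely the classical argument that the paper cites (Howard--Reese \cite{HR2} and \cite{R3}); the paper itself omits the proof. One small remark: radiality of the $T_k$ plays no role in your argument and indeed is not needed for the lemma---it is stated only because that is the setting in which the lemma is applied.
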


The remainder of this section will be devoted to the proof of our main results. Let $1\leq p<2$. We say that a multiplier $\Lambda$ on $\mathcal{S}_{p}(\mathcal{X})$ with symbol $\kappa$ is nontrivial if there exists some $z\in S_{p}$ such that $\kappa(z)\neq 0$.

\begin{Theorem}\label{maximumcase}
Let $1\leq p<2$. Let $\Lambda$ be a nontrivial multiplier on $\mathcal{S}_{p}(\mathcal{X})$  with symbol $\kappa(z)$. Suppose that  a bi-infinite sequence $\{T_{k}\}_{k\in\mathbb{Z}}$ of $L^{p}$-tempered distributions on $\mathcal{X}$ satisfies the following conditions:
\begin{enumerate}
\item[(i)] $\Lambda T_{k}=A~T_{k+1}$ for some $A\in\mathbb{C}^{\times}$ and for all $k\in \Z$.
\item[(ii)] For a fixed semi-norm $\nu_{p,m}(\cdot)$ of $\mathcal{S}_{p}(\mathcal{X})$ and a constant $M>0$, $|\langle T_{k},\phi\rangle|\leq M\nu_{p,m}(\phi)$ for all $k\in \Z$ and $\phi\in\mathcal{S}_{p}(\mathcal{X})$.
\end{enumerate}
Then the following assertions hold:
\begin{enumerate}
\item[(a)] If $|A|=\max\{|\kappa(z)|:z\in S_{p}\}$ and if the range of $\kappa$ intersects $\{w\in\mathbb{C}:|w|=|A|\}$ at finitely many distinct points $A_{1},\ldots,A_{j}$, then $T_{0}$ can be uniquely written as
$$T_{0}=T_{0,1}+T_{0,2}+\cdots+T_{0,j},$$
for some $L^{p}$-tempered distributions $T_{0,i}$ on $\mathcal{X}$ satisfying $\Lambda T_{0,i}=A_{i}T_{0,i}$ for all $i=1,\ldots,j$.
\item[(b)] If $|A|>\max\{|\kappa(z)|:z\in S_{p}\}$, then $T_{0}=0$.
\end{enumerate}
\end{Theorem}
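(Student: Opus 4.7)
The plan is to establish the polynomial identity $P(\Lambda)^{N}T_0=0$, where $P(z)=\prod_{i=1}^{j}(z-A_i)$ and $N$ is a sufficiently large integer; the converse half of Proposition \ref{generalizedproposition} then yields a decomposition $T_0=T_{0,1}+\cdots+T_{0,j}$ with $(\Lambda-A_iI)^{N}T_{0,i}=0$ and $T_{0,i}=\mathbf{P_i}T_0$, and Lemma \ref{generalizedtoeigenfunction} (after a radialisation-plus-translation reduction) lowers the multiplicity to $1$. Uniqueness is the $N=1$ clause of Proposition \ref{generalizedproposition}. Part (b) is the degenerate case: $|A|>\mathcal{M}$ makes the polynomial unnecessary and forces $T_0=0$ directly.

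To obtain the polynomial identity, iterate $\Lambda T_{-n}=AT_{-n+1}$ to get $\Lambda^{n}T_{-n}=A^{n}T_0$. Then for every $\phi\in\mathcal{S}_p(\mathcal{X})$,
\begin{equation*}
\langle T_0,P(\Lambda)^{N}\phi\rangle \;=\; A^{-n}\,\langle T_{-n},\Lambda^{n}P(\Lambda)^{N}\phi\rangle,
\end{equation*}
and hypothesis (ii) gives $|\langle T_0,P(\Lambda)^{N}\phi\rangle|\le M|A|^{-n}\nu_{p,m}(\Lambda^{n}P(\Lambda)^{N}\phi)$. Writing $\Lambda^{n}P(\Lambda)^{N}\phi=\phi\ast h_n$ with radial $h_n$ having spherical transform $\widehat{h_n}(z)=\kappa(z)^{n}(P\circ\kappa)(z)^{N}$, the convolution estimate in the proof of Proposition \ref{lp_multiplier} yields $\nu_{p,m}(\phi\ast h_n)\le C\,\nu_{p,m}(\phi)\nu_{p,m+2}(h_n)$, and Theorem \ref{lp isom} bounds $\nu_{p,m+2}(h_n)$ in terms of $\max\{\mu_{p,m+4}(\widehat{h_n}),\mu_{p,0}(\widehat{h_n})\}$. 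Thus it suffices to prove that for every $\ell\in\mathbb{Z}_+$ one can choose $N$ so that $\mu_{p,\ell}(\kappa^{n}(P\circ\kappa)^{N})=o(|A|^{n})$ as $n\to\infty$.

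This derivative estimate is the main technical obstacle. Let $\mathcal{M}=\max_{z\in S_p}|\kappa(z)|=|A|$ and $Z=\{z\in S_p:|\kappa(z)|=\mathcal{M}\}$. Since $\kappa$ is analytic and $\tau$-periodic, and meets $\{|w|=|A|\}$ only at $A_1,\ldots,A_j$, the set $Z$ is finite modulo $\tau$; subharmonicity of $\log|\kappa|$ further forces $Z\subset\partial S_p$. Outside a small neighbourhood $U$ of $Z$ one has $|\kappa|\le(1-\delta)\mathcal{M}$ uniformly, so by the Leibniz rule every derivative of order $\le\ell$ of $\kappa^{n}(P\circ\kappa)^{N}$ is $O(n^{\ell}(1-\delta)^{n-\ell}\mathcal{M}^{n})=o(|A|^{n})$ on $S_p\setminus U$. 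Near each $z_0\in Z$ with $\kappa(z_0)=A_{i_0}$, write $(P\circ\kappa)(z)=(z-z_0)^{d_0}\eta(z)$ with $d_0\ge 1$ and $\eta(z_0)\neq 0$; the Hopf boundary lemma, applied at the boundary maximum $z_0$ of $|\kappa|$, gives $|\kappa(z)|\le\mathcal{M}\exp(-c|z-z_0|^{\alpha})$ in a neighbourhood of $z_0$ for some $c,\alpha>0$. Combining this decay with the vanishing factor $(z-z_0)^{Nd_0}$ and optimising in $|z-z_0|\sim n^{-1/\alpha}$ yields, for $Nd_0>\alpha\ell$, an overall bound $O(n^{\ell-Nd_0/\alpha}\mathcal{M}^{n})=o(|A|^{n})$.

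Once $P(\Lambda)^{N}T_0=0$ is established, Proposition \ref{generalizedproposition} gives the decomposition. To lower $N$ to $1$, I observe that the hypotheses are preserved under left translation (since $\Lambda$ commutes with $\tau_g$), so running the construction for $\{\tau_g T_k\}$ produces $\tau_g T_{0,i}=\mathbf{P_i}\tau_g T_0$. The sequence $\{\mathscr{R}(\tau_g\mathbf{P_i}T_k)\}_{k\in\mathbb{Z}_+}$ is then a radial sequence satisfying the hypotheses of Lemma \ref{generalizedtoeigenfunction} with $B=A_i$ (noting $|B|=|A|$), and the lemma gives $(\Lambda-A_iI)\mathscr{R}(\tau_g T_{0,i})=0$ for every $g\in G$. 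Setting $S=(\Lambda-A_iI)T_{0,i}$ and pairing against the radial function $\delta_o\in\mathcal{S}_p(\mathcal{X})^{\#}$ yields $\langle S,\delta_x\rangle=0$ for every $x\in\mathcal{X}$, and the density of finitely supported functions in $\mathcal{S}_p(\mathcal{X})$ then gives $S=0$. Uniqueness is the $N=1$ part of Proposition \ref{generalizedproposition}. For part (b), $|A|>\mathcal{M}$ gives $|A|^{-n}\nu_{p,m}(\Lambda^{n}\phi)\to 0$ for every $\phi\in\mathcal{S}_p(\mathcal{X})$, hence $\langle T_0,\phi\rangle\to 0$ and $T_0=0$.
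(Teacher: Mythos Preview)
Your overall strategy matches the paper's: iterate the relation to get $A^nT_0=\Lambda^nT_{-n}$, use the uniform bound (ii) together with the seminorm estimates of Theorem~\ref{lp isom} to show that $(P\circ\kappa)^N\kappa^n/A^n\to 0$ in the right topology, conclude $P(\Lambda)^NT_0=0$, decompose via Proposition~\ref{generalizedproposition}, and then use Lemma~\ref{generalizedtoeigenfunction} (after a translation--radialisation trick) to reduce $N$ to~$1$. The order in which you interleave the radialisation with the polynomial identity differs slightly from the paper---you prove $P(\Lambda)^NT_0=0$ directly for non-radial $T_0$ by estimating the radial kernel $h_n$, whereas the paper first reduces to radial $T_k$ and works with the spherical transform of $T_0$ itself---but this is harmless.

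The genuine gap is in your treatment of the decisive estimate $\mu_{p,\ell}\bigl(\kappa^n(P\circ\kappa)^N\bigr)=o(|A|^n)$. Three of the ingredients you invoke are not justified for a general symbol $\kappa\in\mathcal{H}(S_p)^{\#}$. First, $\kappa$ is analytic only on $S_p^\circ$ and merely $C^\infty$ up to $\partial S_p$, and the maximising set $Z$ lies on the boundary by the maximum principle; analyticity in the interior does not force $Z$ to be finite modulo $\tau$, so your ``near each $z_0\in Z$'' localisation is not available. Second, the Hopf boundary lemma gives a lower bound on the \emph{normal} derivative of $\log|\kappa|$ at a boundary maximum, not the two-sided decay $|\kappa(z)|\le\mathcal{M}\exp(-c|z-z_0|^\alpha)$ you claim; the tangential behaviour needs a separate argument, and you would in any case need the exponent $\alpha$ and the constant $c$ to be uniform over $Z$. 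Third, the factorisation $(P\circ\kappa)(z)=(z-z_0)^{d_0}\eta(z)$ with $\eta(z_0)\neq 0$ presupposes that $P\circ\kappa$ vanishes to finite order at the boundary point $z_0$, which is not automatic for a function that is only $C^\infty$ there.

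The paper avoids all three obstacles by working on the \emph{image} side rather than the domain. One sets $U_i=\{z\in S_p^+:\kappa(z)\in B(A_i,r)\}$ for small $r$; then $\kappa(U_i)/A_i$ is a compact subset of $\mathbb{D}\cup\{1\}$ (because on $U_i$ the only value of $\kappa$ of modulus $|A|$ is $A_i$), hence is contained in a closed disc $\mathbb{D}_{s_i}$ internally tangent to the unit circle at $1$. On such a disc one has the elementary quadratic inequality $1-|w|^2\ge c_{s_i}|1-w|^2$, and the estimate
\[
\sup_{w\in\mathbb{D}_{s_i}}k^{m}\,|w|^{k-m}\,|w-1|^{4m+2}\longrightarrow 0\qquad(k\to\infty)
\]
follows by splitting $\mathbb{D}_{s_i}$ into the region $|1-w|\lesssim k^{-1/4}$ and its complement. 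This replaces your Hopf/factorisation argument entirely, needs no finiteness of $Z$, and uses nothing about $\kappa$ beyond continuity on $S_p$ and the hypothesis that $\kappa(S_p)\cap\{|w|=|A|\}=\{A_1,\ldots,A_j\}$. Your argument for part~(b) is correct as written.
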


\begin{proof}
{(a)} For convenience we divide the proof in a few steps.

\noindent{\textbf{Step 1:}} In this step we assume that  $T_{k}$ are radial distributions and prove that for a fixed $N\in\mathbb{Z}_{+}$,
\begin{equation}\label{lpgeneralized1}
(\Lambda-A_{1}I)^{N+1}(\Lambda-A_{2}I)^{N+1}\cdots(\Lambda-A_{j}I)^{N+1}T_{0}=0,
\end{equation}
which by Theorem \ref{lp isom} is equivalent  to showing that
\begin{equation}\label{lpgeneralized2}
(\kappa(z)-A_{1})^{N+1}(\kappa(z)-A_{2})^{N+1}\cdots(\kappa(z)-A_{j})^{N+1}\widehat{T}_{0}=0,
\end{equation}
where $A_{1},A_{2},\ldots,A_{j}$ are distinct complex numbers from the range of $\kappa$ such that for all $i$, $|A_{i}|=|A|=\max\{|\kappa(z)|:z\in S_{p}\}$. From hypothesis (i) of this theorem, we obtain $\Lambda^{k}T_{-k}=A^{k}T_{0}$ for all $k\in\mathbb{Z}_{+}$. Taking spherical Fourier transform of both sides, we get
\begin{equation}\label{recurrence1}
\kappa(z)^{k}\widehat{T}_{-k}=A^{k}\widehat{T}_{0},\quad\text{for all }k\in\mathbb{Z}_{+}.
\end{equation}
We recall that $z\mapsto\kappa(z)$ is nontrivial and hence so is $A$. This fact together with (\ref{recurrence1}) and hypothesis (ii) yields, for all $\psi\in\mathcal{H}(S_{p})^{\#}$,
\begin{align*}
|\langle(\kappa(z)-A_{1})^{N+1}\cdots(\kappa(z)-A_{j})^{N+1}\widehat{T}_{0},\psi\rangle|&\\
&\hspace*{-0.8in}=\left|\left\langle\widehat{T}_{-k},\left(\frac{\kappa(z)}{A}\right)^{k}(\kappa(z)-A_{1})^{N+1}\cdots(\kappa(z)-A_{j})^{N+1}\psi\right\rangle\right|\\
&\hspace*{-0.8in}=\left|\left\langle T_{-k},\left(\left(\frac{\kappa(z)}{A}\right)^{k}(\kappa(z)-A_{1})^{N+1}\cdots(\kappa(z)-A_{j})^{N+1}\psi\right)^{\vee}\right\rangle\right|\\
&\hspace*{-0.85in}\leq M\nu_{p,m}\left[\left(\left(\frac{\kappa(z)}{A}\right)^{k}(\kappa(z)-A_{1})^{N+1}\cdots(\kappa(z)-A_{j})^{N+1}\psi\right)^{\vee}\right].
\end{align*}
Thus by Theorem \ref{lp isom}, there exists $m\in\mathbb{Z}_{+}$ such that
\begin{multline*}
|\langle(\kappa(z)-A_{1})^{N+1}\cdots(\kappa(z)-A_{j})^{N+1}\widehat{T}_{0},\psi\rangle|\\
\leq M\max\limits_{n=0,m}\left\{\mu_{p,n}\left[\left(\frac{\kappa(z)}{A}\right)^{k}(\kappa(z)-A_{1})^{N+1}\cdots(\kappa(z)-A_{j})^{N+1}\psi\right]\right\}.
\end{multline*}
We choose $N=5m+1$. To prove (\ref{lpgeneralized2}), it is enough to show that
$$\mu_{p,n}\left[\left(\frac{\kappa(z)}{A}\right)^{k}(\kappa(z)-A_{1})^{5m+2}\cdots(\kappa(z)-A_{j})^{5m+2}\psi\right]\rightarrow 0\quad\text{as }k\rightarrow\infty,\text{ for each }n=0,m.$$
We first consider the case $n=m$. Since $z\mapsto\kappa(z)$ and $z\mapsto\psi(z)$ are even and $\tau$-periodic functions on $S_{p}$, we have
$$\mu_{p,m}\left[\left(\frac{\kappa(z)}{A}\right)^{k}(\kappa(z)-A_{1})^{5m+2}\cdots(\kappa(z)-A_{j})^{5m+2}\psi\right]=\sup\limits_{z\in S^{+}_{p}} F_{k}(z),$$
where $S^{+}_{p}=\{z\in S_{p}: |\Re z|\leq \tau/2\text{ and }\Im z\geq 0\}$ and for $z\in S^{+}_{p}$,
$$F_{k}(z)=\left|\frac{d^{m}}{dz^{m}}\left(\left(\frac{\kappa(z)}{A}\right)^{k}(\kappa(z)-A_{1})^{5m+2}\cdots(\kappa(z)-A_{j})^{5m+2}\psi(z)\right)\right|.$$
Hence, it suffices to prove that
\begin{equation}\label{step1mainresult}
\sup\limits_{z\in S^{+}_{p}} F_{k}(z)\rightarrow 0\quad\text{as}\quad k\rightarrow\infty.
\end{equation}
For this purpose, we shall decompose $S^{+}_{p}$ into finitely many disjoint subsets and compute the supremum of $F_{k}(z)$ over each of them. Before going into the details, let us first observe that for all $z\in S^{+}_{p}$ and for all $i=1,\ldots,j$,
\begin{align}
F_{k}(z)&\leq B_{m}\sum\limits_{a+b+c=m}\left|\frac{d^{a}}{dz^{a}}\left(\left(\frac{\kappa(z)}{A}\right)^{k}\right)\right|\times\left|\frac{d^{b}}{dz^{b}}\left((\kappa(z)-A_{i})^{5m+2}\right)\right|\nonumber\\
&\hspace*{1.7in}\times\left|\frac{d^{c}}{dz^{c}}\left((\kappa(z)-A_{1})^{5m+2}\cdots(\kappa(z)-A_{i-1})^{5m+2}\right.\right.\nonumber\\
&\hspace*{2.65in}\left.\cdot~(\kappa(z)-A_{i+1})^{5m+2}\cdots(\kappa(z)-A_{j})^{5m+2}\psi(z)\right)\Big|\nonumber\\
&\leq C_{m}~k^{m}~\left|\frac{\kappa(z)}{A}\right|^{k-m}~|\kappa(z)-A_{i}|^{4m+2}\label{pcase2ndinequality}\\
&\leq D_{m}~k^{m}~\left|\frac{\kappa(z)}{A}\right|^{k-m}.\label{pcase1stinequality}
\end{align}

Since  $A_{1},\ldots,A_{j}\in \C$ are distinct, we can find a small $r>0$ such that the closed balls $B(A_{i},r)=\{w\in\mathbb{C}:|w-A_{i}|\leq r\}$, $i=1,\ldots,j$ are all pairwise disjoint. We define the sets
\begin{align*}
U_{i}&=\{z\in S^{+}_{p}:\kappa(z)\in B(A_{i},r)\},\quad\text{for }i=1,\ldots,j,\quad\text{and}\\
V&=S^{+}_{p}\setminus\left(~\bigcup\limits_{i=1}^{j}U_{i}~\right).
\end{align*}
Note that the closure  $\overline{V}$ of $V$ is a compact subset of $S^{+}_{p}$ and $\kappa(\overline{V})$ does not contain the points $A_{1},\ldots,A_{j}$. Furthermore, the map $z\mapsto |\kappa(z)/A|$ is continuous on $S^{+}_{p}$. Therefore
$$\sup\limits_{z\in\overline{V}}\left|\frac{\kappa(z)}{A}\right|=\alpha<1.$$
From this and  (\ref{pcase1stinequality}), we finally get
\begin{equation}\label{pcase1estimate}
\sup\limits_{z\in V} F_{k}(z)\leq C_{m} k^{m}\alpha^{k-m}\rightarrow 0\quad\text{as}\quad k\rightarrow\infty.
\end{equation}

We next turn to estimating the supremum of $F_{k}(z)$ over $U_{i}$. Fix $i\in\{1,\ldots,j\}$. The fact $|A_{i}|=|A|\neq 0$ together with the estimate (\ref{pcase2ndinequality}) yields
\begin{equation}\label{p2ndinequality}
F_{k}(z)\leq E_{m}~k^{m}~\left|\frac{\kappa(z)}{A_{i}}\right|^{k-m}\left|\frac{\kappa(z)}{A_{i}}-1\right|^{4m+2},\quad\text{for all }z\in U_{i}.
\end{equation}
It is easy to see that $U_{i}$ is a compact set in $S^{+}_{p}$ whose range under the map $\kappa$ contains $A_{l}$ only when $l=i$. This, together with the continuity of $\kappa$ and the fact $|A_{i}|=\sup\{|\kappa(z)|:z\in S_{p}\}$ implies that the image of $U_{i}$ under the map $z\mapsto \kappa(z)/A_{i}$ is a compact subset of $\mathbb{D}\cup\{1\}$, where $\mathbb{D}=\{w\in\mathbb{C}:|w|<1\}$. Now, consider a one-parameter family of sets of the form $\{\mathbb{D}_{s}:0<s<1\}$, where $\mathbb{D}_{s}=\{w\in\mathbb{C}:(\Re w-(1-s))^{2}+(\Im w)^{2}\leq s^{2}\}$. Clearly, $\{\mathbb{D}_{s}:0<s<1\}$ is an expanding collection of sets that covers $\mathbb{D}\cup\{1\}$. From the preceding argument, there exists a fixed $s_{i}\in(0,1)$ such that $\kappa(U_{i})/A_{i}\subset \mathbb{D}_{s_{i}}$ and $0\in\mathbb{D}_{s_{i}}$. Therefore, in view of (\ref{p2ndinequality}) and (\ref{step1mainresult}) it is only left to prove that
\begin{equation}\label{discinequality}
\sup\limits_{w\in\mathbb{D}_{s_{i}}} k^{m}~|w|^{k-m}~|w-1|^{4m+2}\rightarrow 0\quad\text{as}\quad k\rightarrow\infty.
\end{equation}

To this end, we decompose $\mathbb{D}_{s_{i}}$ into the sets $V_{i,k}$ and $V^{\mathrm{C}}_{i,k}=\mathbb{D}_{s_{i}}\setminus V_{i,k}$, where for all $k\geq 4$,
$$V_{i,k}=\{w\in\mathbb{C}:\Re w=(1-s_{i})+s\cos\theta,\Im w=s\sin\theta,~s_{i}-k^{-1/2}\leq s\leq s_{i},~-k^{-1/4}\leq\theta\leq k^{-1/4}\}.$$
Clearly, $V_{i,k}$ is a compact subset of $\mathbb{D}_{s_{i}}$ that contains the point $1$. Furthermore, for every $w\in V_{i,k}$,
$$|1-w|^{2}=(s_{i}-s\cos\theta)^{2}+s^{2}\sin^{2}\theta\leq \left(s_{i}-\left(s_{i}-\frac{1}{\sqrt{k}}\right)\cos\left(\frac{1}{k^{1/4}}\right)\right)^{2}+s_{i}^{2}\sin^{2}\left(\frac{1}{k^{1/4}}\right),$$
which together with the trivial estimates $(|x|^{2}+|y|^{2})^{1/2}\leq |x|+|y|$ for all $x,y$ real, $\sin\theta\leq \theta$ whenever $\theta\in[0,1]$ and the fact $0<s_{i}<1$ yields
\begin{align*}
|1-w|&\leq\left|\left(s_{i}-\left(s_{i}-\frac{1}{\sqrt{k}}\right)\right)+\left(s_{i}-\frac{1}{\sqrt{k}}\right)\left(1-\cos\left(\frac{1}{k^{1/4}}\right)\right)\right|+\frac{s_{i}}{k^{1/4}}\\
&\leq\frac{1}{\sqrt{k}}+2\left(s_{i}-\frac{1}{\sqrt{k}}\right)\sin^{2}\left(\frac{1}{2k^{1/4}}\right)+\frac{s_{i}}{k^{1/4}}\\
&\leq \frac{4}{k^{1/4}}.
\end{align*}
Consequently, there exists a constant $C_{1}>0$ such that
\begin{equation}\label{disc1stestimate}
k^{m}~|w|^{k-m}~|w-1|^{4m+2}\leq C_{1}~k^{-1/2},\quad\text{for all }k\geq 4\text{ and for all }w\in V_{i,k}.
\end{equation}

Our next aim is to compute the supremum of the quantity on the left hand side of (\ref{disc1stestimate}) over the set $V^{\mathrm{C}}_{i,k}$. We first prove that there exists a constant $C_{2}>0$ such that
\begin{equation}\label{disc2ndestimate}
|w|\leq \left(1+\frac{C_{2}}{\sqrt{k}}\right)^{-1/2},\quad\text{for all }w\in V^{\mathrm{C}}_{i,k}.
\end{equation}
Fix $w\in V^{\mathrm{C}}_{i,k}$ and suppose that $\Re w=(1-s_{i})+s\cos\theta$ and $\Im w=s\sin\theta$. Depending on the possibilities, we consider the following two parts. Firstly, let $0\leq s< s_{i}-1/\sqrt{k}$ and $-\pi\leq\theta\leq\pi$. Then $w$ is non-zero. Moreover, an easy computation shows that
\begin{equation}\label{vcompliment1}
1-|w|\geq 1-\left[(1-s_{i})^{2}+2(1-s_{i})\left(s_{i}-\frac{1}{\sqrt{k}}\right)+\left(s_{i}-\frac{1}{\sqrt{k}}\right)^{2}\right]^{1/2}=\frac{1}{\sqrt{k}}.
\end{equation}
Secondly, if $s_{i}-1/\sqrt{k}\leq s\leq s_{i}$ and $\theta\in[-1/k^{1/4},1/k^{1/4}]^{\mathrm{C}}$, then
\begin{equation}\label{vcompliment2}
1-|w|^{2}\geq 1-\left[(1-s_{i})^{2}+s_{i}^{2}+2s_{i}(1-s_{i})\cos\left(\frac{1}{k^{1/4}}\right)\right]=4s_{i}(1-s_{i})\sin^{2}\left(\frac{1}{2k^{1/4}}\right)\geq\frac{C_{i}}{\sqrt{k}},
\end{equation}
for some constant $C_{i}$ which depends only on $s_{i}$. Now dividing both sides of (\ref{vcompliment1}) and (\ref{vcompliment2}) by $|w|$ and $|w|^{2}$ (for $w\neq 0$) respectively, we get (\ref{disc2ndestimate}) with $C_{2}=\min\{1,C_{i}\}$. Consequently,
\begin{equation}\label{disc3rdestimate}
k^{m}~|w|^{k-m}~|w-1|^{4m+2}\leq C_{3}~k^{m}~\left(1+\frac{C_{2}}{\sqrt{k}}\right)^{(-k+m)/2},\quad\text{for all }w\in V^{\mathrm{C}}_{i,k}.
\end{equation}
Letting $k\rightarrow\infty$ in (\ref{disc1stestimate}) and (\ref{disc3rdestimate}), we get (\ref{discinequality}), which combined with (\ref{pcase1estimate}) implies (\ref{step1mainresult}) and finally proves (\ref{lpgeneralized1}).

\noindent{\textbf{Step 2:}} Keeping the radiality of the sequence $\{T_{k}\}_{k\in\mathbb{Z}}$ intact, here we shall show that  $N=0$ in (\ref{lpgeneralized1}). Let us assume, on the contrary, that
\begin{equation}\label{contradictionexpression}
(\Lambda-A_{1}I)(\Lambda-A_{2}I)\cdots(\Lambda-A_{j}I)T_{0}\neq 0.
\end{equation}
In view of (\ref{lpgeneralized1}) and (\ref{contradictionexpression}), it is possible to choose a natural number $N_{0}$ satisfying $1\leq N_{0}\leq N$ for which
\begin{align}
(\Lambda-A_{1}I)^{N_{0}}(\Lambda-A_{2}I)^{N_{0}}\cdots(\Lambda-A_{j}I)^{N_{0}}T_{0}&\neq 0,\quad\text{and}\label{contradictionexpression2}\\
(\Lambda-A_{1}I)^{N_{0}+1}(\Lambda-A_{2}I)^{N_{0}+1}\cdots(\Lambda-A_{j}I)^{N_{0}+1}T_{0}&=0.\label{usinggeneralisedproposition}
\end{align}
Then Proposition \ref{generalizedproposition} together with (\ref{usinggeneralisedproposition}) gives us the decomposition
\begin{equation}\label{decomposition2}
T_{0}=T_{0,1}+T_{0,2}+\cdots+T_{0,j},
\end{equation}
for some $L^{p}$-tempered distributions $T_{0,1},\ldots,T_{0,j}$ on $\mathcal{X}$ satisfying $(\Lambda-A_{i}I)^{N_{0}+1}T_{0,i}=0$ for all $i=1,\ldots,j$. Indeed, using (\ref{projectionoperators}) and (\ref{decompositionoftempered}), we have the explicit expression $T_{0,i}=\mathbf{P_{i}}T_{0}$ for each $i$, where $\mathbf{P_{i}}$ is a projection operator of the form
\begin{equation}\label{projectionoperators2}
\mathbf{P_{i}}=(\Lambda-A_{1}I)^{N_{0}+1}\cdots(\Lambda-A_{i-1}I)^{N_{0}+1}(\Lambda-A_{i+1}I)^{N_{0}+1}\cdots(\Lambda-A_{j}I)^{N_{0}+1}Q_{i}(\Lambda),
\end{equation}
and $Q_{i}$ is a nonzero polynomial of degree at most $N_{0}$. Furthermore, combining (\ref{contradictionexpression2}) and (\ref{decomposition2}), it follows that for a fixed $i_{0}\in\{1,\ldots,j\}$,
\begin{equation}\label{contradiction}
(\Lambda-A_{i_{0}}I)^{N_{0}}T_{0,i_{0}}\neq 0.
\end{equation}
Now, using (\ref{projectionoperators2}), let us define
$$T_{k,i_{0}}=\mathbf{P_{i_{0}}}T_{k},\quad\text{for all }k\in\mathbb{Z}_{+}.$$
Observe that $\{T_{k,i_{0}}\}_{k\in\mathbb{Z}_{+}}$ is an infinite sequence of radial $L^{p}$-tempered distributions on $\mathcal{X}$. Indeed, the radiality of the sequence $\{T_{k}\}_{k\in\mathbb{Z}_{+}}$ and the fact that $\mathscr{R}(\Lambda\phi)=\Lambda(\mathscr{R}\phi)$ for all $\phi\in\mathcal{S}_{p}(\mathcal{X})$, together yields
$$\langle T_{k,i_{0}},\mathscr{R}\phi\rangle=\langle \mathbf{P_{i_{0}}}T_{k},\mathscr{R}\phi\rangle=\langle T_{k},\mathscr{R}(\mathbf{P_{i_{0}}}\phi)\rangle=\langle \mathscr{R}T_{k},\mathbf{P_{i_{0}}}\phi\rangle=\langle T_{k},\mathbf{P_{i_{0}}}\phi\rangle=\langle T_{k,i_{0}},\phi\rangle.$$
It also follows from hypothesis (i) of this theorem that $\Lambda T_{k,i_{0}}=A~ T_{k+1,i_{0}}$ for all $k\in\mathbb{Z}_{+}$. Furthermore, using Proposition \ref{lp_multiplier} and hypothesis (ii) of this theorem, we get
$$|\langle T_{k,i_{0}},\phi\rangle|=|\langle T_{k},\mathbf{P_{i_{0}}}\phi\rangle|\leq M~\nu_{p,m}(\mathbf{P_{i_{0}}}\phi)\leq M_{1}~\nu_{p,m}(\phi),\text{ for all }\phi\in\mathcal{S}_{p}(\mathcal{X})\text{ and for all }k\in\mathbb{Z}_{+}.$$
Therefore, the infinite sequence $\{T_{k,i_{0}}\}_{k\in\mathbb{Z}_{+}}$ satisfies all the hypothesis of Lemma \ref{generalizedtoeigenfunction} and $(\Lambda-A_{i_{0}}I)^{N_{0}+1}T_{0,i_{0}}=0$ for some $N_{0}\in\mathbb{N}$, with $|A_{i_{0}}|=|A|\neq 0$. Consequently, by Lemma \ref{generalizedtoeigenfunction}, $(\Lambda-A_{i_{0}}I)T_{0,i_{0}}=0$, which is a contradiction to the expression (\ref{contradiction}) and our assumption on $N_{0}$. Hence $N=0$ in (\ref{lpgeneralized1}), that is,
\begin{equation}\label{radiallpdistribution}
(\Lambda-A_{1}I)(\Lambda-A_{2}I)\cdots(\Lambda-A_{j}I)T_{0}=0.
\end{equation}
Applying Proposition \ref{generalizedproposition} to the expression above, we finally get the desired conclusion for radial $L^{p}$-tempered distributions.

\noindent{\textbf{Step 3:}} We shall  withdraw the  assumption of radiality on  $\{T_{k}\}_{k\in\mathbb{Z}}$. For each $g$ in $G$, we define the sequence $\{\mathscr{R}(\tau_{g}T_{k})\}_{k\in\mathbb{Z}}$ of radial $L^{p}$-tempered distributions on $\mathfrak{X}$.
Then using hypothesis (i)  together with the fact that $\Lambda$ commutes with $\tau_g$ and $\mathscr R$, we get
$$\Lambda(\mathscr{R}(\tau_{g}T_{k}))=A~\mathscr{R}(\tau_{g}T_{k+1}),\quad\text{for all }k\in\mathbb{Z}.$$
Furthermore, using hypothesis (ii), we obtain, for all $\phi\in\mathcal{S}_{p}(\mathfrak{X})^{\#}$,
\begin{equation}\label{seminormtranslation}
|\langle\mathscr{R}(\tau_{g}T_{k}),\phi\rangle|=|\langle\tau_{g}T_{k},\phi\rangle|=|\langle T_{k},\tau_{g^{-1}}\phi\rangle|\leq M~\nu(\tau_{g^{-1}}\phi).
\end{equation}
 Since,  $|g^{-1}\cdot y|\leq (|g^{-1}\cdot o|+|y|)$ and $(1+|g^{-1}\cdot y|)\leq (1+|g^{-1}\cdot o|)(1+|y|)$ for all $g\in G$ and  $y \in \mathfrak{X}$, we have (see \eqref{schwartzspace})
\begin{align*}
\nu_{p,m}(\tau_{g^{-1}}\phi)&=\sup\limits_{y\in\mathfrak{X}}(1+|y|)^{m}q^{|y|/p}|\phi(g\cdot y)|\\
&=\sup\limits_{y\in\mathfrak{X}}(1+|g^{-1}\cdot y|)^{m}q^{|g^{-1}\cdot y|/p}|\phi(y)|\\
&\leq(1+|g^{-1}\cdot o|)^{m}q^{|g^{-1}\cdot o|/p}\sup\limits_{y\in\mathfrak{X}}(1+|y|)^{m}q^{|y|/p}|\phi(y)|\\
&=M_{g}~\nu_{p,m}(\phi),
\end{align*}
where $M_{g}$ is a constant depending only on $g$. Combining this  with (\ref{seminormtranslation}), we get
$$|\langle\mathscr{R}(\tau_{g}T_{k}),\phi\rangle|\leq M_{g}~\nu(\phi),\quad\text{for all }k\in\mathbb{Z}\text{ and for all }\phi\in\mathcal{S}_{p}(\mathfrak{X})^{\#}.$$
Thus for each $g \in G$, the sequence $\{\mathscr{R}(\tau_{g}T_{k})\}_{k\in\mathbb{Z}}$  of radial $L^{p}$-tempered distributions satisfies the hypotheses of this theorem. Hence by Step 2, we have $(\Lambda-A_{1}I)(\Lambda-A_{2}I)\cdots(\Lambda-A_{j}I)\mathscr{R}(\tau_{g}T_{0})=0$, for all $g$ in $G$. Using  commutativity of $\Lambda$ with $\tau_g$ and $\mathscr R$ we conclude that
$$\mathscr{R}(\tau_{g}((\Lambda-A_{1}I)(\Lambda-A_{2}I)\cdots(\Lambda-A_{j}I)T_{0}))=0,\quad\text{for all }g\in G.$$
This implies that $(\Lambda-A_{1}I)(\Lambda-A_{2}I)\cdots(\Lambda-A_{j}I)T_{0}=0$.
Indeed, if for an $L^{p}$-tempered distribution $T$, if $\mathscr{R}(\tau_{g}T)=0$ for every $g\in G$, then
$$\langle\tau_{g}T,\delta_{o}\rangle=T\ast\delta_{o}(g^{-1}\cdot o)=0,\quad\text{for all }g\in G,$$
where $\delta_{o}$ denotes the Dirac measure at the reference point $o$. Since $T\ast\delta_{o}=T$ in the sense of distribution, we have $T=0$.

The assertion follows from this and  Proposition \ref{generalizedproposition}.

\noindent{\textit{Proof of Part \rm{(b)}:}} Assuming the radiality of the distributions $T_{k}$ and using the recurrence relation (\ref{recurrence1}), for a fixed $\psi\in\mathcal{H}(S_{p})^{\#}$, we get
$$|\langle\widehat{T}_{0},\psi\rangle|=\left|\left\langle\widehat{T}_{-k},\left(\frac{\kappa(z)}{A}\right)^{k}\psi\right\rangle\right|=\left|\left\langle T_{-k},\left(\left(\frac{\kappa(z)}{A}\right)^{k}\psi\right)^{\vee}\right\rangle\right|,$$
which together with hypothesis (ii) of this theorem and  Theorem \ref{lp isom} yields
$$|\langle\widehat{T}_{0},\psi\rangle|\leq M \max\limits_{n=0,m}\left\{\mu_{p,n}\left[\left(\frac{\kappa(z)}{A}\right)^{k}\psi\right]\right\},\quad\text{for a fixed }m\in\mathbb{N}.$$
Clearly, the given condition $|A|>\max\{|\kappa(z)|:z\in S_{p}\}$ implies that
$$\mu_{p,n}\left[\left(\frac{\kappa(z)}{A}\right)^{k}\psi\right]\rightarrow 0\quad\text{as}\quad k\rightarrow\infty,\quad\text{for each }n=0,m.$$
Consequently, $\widehat{T}_{0}=0$ and hence so is $T_{0}$. The general case now follows by arguing in a similar way as in part (a) of this theorem.
\end{proof}

\begin{Theorem}\label{minimumcase}
Let $1\leq p<2$. Let $\Lambda$ be a multiplier on $\mathcal{S}_{p}(\mathcal{X})$ associated with symbol $\kappa(z)$ satisfying $\kappa(z)\neq 0$ for all $z\in S_{p}$. Suppose that the sequence $\{T_k\}$ in Theorem \ref{maximumcase} is infinite (instead of bi-infinite) and parametrized by $\Z_+$ and they satisfy conditions (i) and (ii) of  Theorem \ref{maximumcase} for all $k\in \Z_+$. Then the following assertions hold:
\begin{enumerate}
\item[(a)] If $|A|=\min\{|\kappa(z)|:z\in S_{p}\}$ and that the range of $\kappa$ intersects $\{w\in\mathbb{C}:|w|=|A|\}$ at finitely many distinct points $A_{1},\ldots,A_{j}$, then $T_{0}$ can be uniquely written as
$$T_{0}=T_{0,1}+T_{0,2}+\cdots+T_{0,j},$$
for some $L^{p}$-tempered distributions $T_{0,i}$ on $\mathcal{X}$ satisfying $\Lambda T_{0,i}=A_{i}T_{0,i}$ for all $i=1,\ldots,j$.
\item[(b)] If $|A|<\min\{|\kappa(z)|:z\in S_{p}\}$, then $T_{0}=0$.
\end{enumerate}
\end{Theorem}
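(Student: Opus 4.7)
The plan is to recycle the architecture of the proof of Theorem~\ref{maximumcase}, exploiting the new hypothesis $\kappa(z)\neq 0$ on $S_p$ to invert the recurrence. Since $\kappa\in\mathcal{H}(S_p)^\#$ is even and $\tau$-periodic, it lives effectively on the compact domain $S_p^+$, and nonvanishing forces $|\kappa|$ to be bounded below by a positive constant on $S_p$; consequently $1/\kappa\in\mathcal{H}(S_p)^\#$, so the symbol $(A/\kappa(z))^k$ is admissible at every step. The reduction to the radial case is identical to Step~3 of the proof of Theorem~\ref{maximumcase} (replace $T_k$ by $\mathscr{R}(\tau_g T_k)$ and invoke the commutation of $\Lambda$ with $\tau_g$ and $\mathscr{R}$), so I may assume throughout that each $T_k$ is radial.

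For part~(a), iterating $T_{k+1}=A^{-1}\Lambda T_k$ and taking spherical Fourier transforms yields the inverted recurrence
\begin{equation*}
\widehat{T}_0 \;=\; (A/\kappa(z))^k\,\widehat{T}_k\qquad(k\in\mathbb{Z}_{+}),
\end{equation*}
which replaces the identity $\kappa^k\widehat{T}_{-k}=A^k\widehat{T}_0$ used in the maximum case. Pairing this with $(\kappa-A_1)^{N+1}\cdots(\kappa-A_j)^{N+1}\psi$, invoking the uniform seminorm bound (ii), and applying Theorem~\ref{lp isom} reduces the task (with $N:=5m+1$, as in Step~1 of Theorem~\ref{maximumcase}) to showing
\begin{equation*}
\mu_{p,n}\bigl[(A/\kappa(z))^k(\kappa(z)-A_1)^{5m+2}\cdots(\kappa(z)-A_j)^{5m+2}\psi(z)\bigr]\;\longrightarrow\;0\qquad(k\to\infty)
\end{equation*}
for $n\in\{0,m\}$. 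Following the same splitting $S_p^+ = V\cup\bigcup_i U_i$ with $U_i=\{z\in S_p^+:\kappa(z)\in B(A_i,r)\}$, on $V$ one has $|A/\kappa|\leq\alpha<1$ and the decay is immediate. On each $U_i$ the natural substitution now becomes $w=A_i/\kappa(z)$ (in place of $\kappa(z)/A_i$ in the maximum case): since $|\kappa(z)|\geq|A|=|A_i|$ on $U_i$, we get $|w|\leq 1$ with $w=1$ precisely where $\kappa=A_i$, so $A_i/\kappa(U_i)$ is a compact subset of $\overline{\mathbb{D}}$ contained in an arbitrarily small neighbourhood of $1$ and therefore fits inside one of the tangent discs $\mathbb{D}_{s_i}$ of Step~1 of Theorem~\ref{maximumcase}. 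A direct computation yields
\begin{equation*}
\bigl|(A/\kappa)^k(\kappa-A_i)^{4m+2}\bigr|\;=\;|w|^{k-4m-2}\,|A_i|^{4m+2}\,|1-w|^{4m+2},
\end{equation*}
and the required decay reduces to exactly the $V_{i,k}/V_{i,k}^{\mathrm{C}}$ disc estimate already proved there; the extra factor $|w|^{-(4m+2)}$ is harmless because $|w|$ is bounded below on $U_i$ by $|A|/\sup_{S_p}|\kappa|>0$.

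This establishes $(\Lambda-A_1I)^{N+1}\cdots(\Lambda-A_jI)^{N+1}T_0=0$. The reduction of the exponent to~$1$ follows the contradiction scheme of Step~2 of Theorem~\ref{maximumcase}: define $T_{k,i_0}=\mathbf{P}_{i_0}T_k$ for $k\in\mathbb{Z}_{+}$, observe that this sequence is radial, satisfies $\Lambda T_{k,i_0}=A\,T_{k+1,i_0}$, and inherits the uniform seminorm bound because $\mathbf{P}_{i_0}$ is a polynomial in the multiplier $\Lambda$; Lemma~\ref{generalizedtoeigenfunction}, which is stated precisely for $\mathbb{Z}_{+}$-indexed sequences, then applies verbatim and yields the contradiction. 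Proposition~\ref{generalizedproposition} with $N=1$ finally produces the unique decomposition claimed in~(a). Part~(b) is a short byproduct: when $|A|<\min_{S_p}|\kappa|$, the symbol $A/\kappa\in\mathcal{H}(S_p)^\#$ has sup-norm strictly below~$1$, so $\mu_{p,n}((A/\kappa)^k\psi)\to 0$ for every $n$ and every $\psi\in\mathcal{H}(S_p)^\#$, and the identity $\widehat{T}_0=(A/\kappa)^k\widehat{T}_k$ combined with (ii) forces $T_0=0$. The main obstacle I anticipate is the substitution step in part~(a): one must verify carefully that $w=A_i/\kappa(z)$ produces a setup structurally equivalent to Step~1 of Theorem~\ref{maximumcase}, and that the negative power of $|w|$ introduced by re-expressing $|\kappa-A_i|$ in terms of $w$ is indeed controlled by the uniform positive lower bound on $|\kappa|$ supplied by the nonvanishing hypothesis.
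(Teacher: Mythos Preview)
Your proposal is correct and follows essentially the same approach as the paper, which simply remarks that the proof of Theorem~\ref{maximumcase} carries over once the recurrence $\kappa(z)^{k}\widehat{T}_{-k}=A^{k}\widehat{T}_{0}$ is replaced by $\kappa(z)^{k}\widehat{T}_{0}=A^{k}\widehat{T}_{k}$ (equivalently, $\widehat{T}_0=(A/\kappa)^k\widehat{T}_k$). Your explicit handling of the substitution $w=A_i/\kappa(z)$ and the observation that the resulting negative power $|w|^{-(4m+2)}$ is controlled by the uniform lower bound $|w|\geq |A|/\sup_{S_p}|\kappa|>0$ supply exactly the details the paper leaves implicit in the phrase ``following steps similar to that of the previous theorem.''
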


Both of the parts of this theorem can be proved following steps similar to that of previous theorem. The only difference is  instead of (\ref{recurrence1}), here we use the recurrence relation $\kappa(z)^{k}\widehat{T}_{0}=A^{k}\widehat{T}_{k}$ for all $k\in\mathbb{Z}_{+}$, in all intermediate steps. This clarifies why an infinite sequence of $T_k$ is enough for this case.

The next result shows that if we assume  additionally that  the symbol $\kappa(z)$ of the multiplier $\Lambda$ is nonvanishing, then hypothesis of Theorem \ref{maximumcase} can be cut-down to one-sided sequence.

\begin{Theorem}\label{maximumcasezero}
Let $1\leq p<2$. Let $\Lambda$ be a multiplier on $\mathcal{S}_{p}(\mathcal{X})$ with symbol $\kappa(z)$ satisfying $\kappa(z)\neq 0$ for all $z\in S_{p}$. Suppose that $\{T_{-k}\}_{k\in\mathbb{Z}_{+}}$ is an infinite sequence in $\mathcal{S}_{p}(\mathcal X)^{\prime}$ satisfying hypothesis (i) of Theorem \ref{maximumcase} for all $-k$, $k\in \mathbb{N}$ and hypothesis (ii) of Theorem \ref{maximumcase} for all $-k$, $k\in \mathbb{Z}_{+}$. Then both the assertions (a) and (b) of Theorem \ref{maximumcase} hold.
\end{Theorem}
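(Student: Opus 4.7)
The strategy is to reuse the proof of Theorem \ref{maximumcase} and isolate the single place where the bi-infinite nature of the sequence was essential. Part (b) of Theorem \ref{maximumcase} uses only the recurrence $\kappa(z)^k\widehat{T}_{-k}=A^k\widehat{T}_0$ coming from $\Lambda^k T_{-k}=A^k T_0$; that argument never involves positive-indexed $T_k$ (and does not even require the nonvanishing of $\kappa$), so it transfers verbatim to our setting. Similarly, for Part (a), Step 1 and Step 3 in the original proof rely only on the negative-indexed part, so they go through unchanged and yield, in the radial case after reduction, the identity $(\Lambda-A_1I)^{N+1}\cdots(\Lambda-A_jI)^{N+1}T_0=0$ for some large $N$.

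The only ingredient that genuinely uses the positive half of the sequence is Lemma \ref{generalizedtoeigenfunction}, invoked in Step 2 to drop the exponent from $N+1$ down to $1$. The key task is therefore to prove a one-sided analogue of that lemma: if $\{T_{-k}\}_{k\in\mathbb{Z}_+}$ are radial $L^p$-tempered distributions satisfying $\Lambda T_{-k-1}=AT_{-k}$ and a uniform seminorm bound, and if $(\Lambda-BI)^{N+1}T_0=0$ for some $B$ with $|A|=|B|\neq 0$, then $(\Lambda-BI)T_0=0$. The nonvanishing hypothesis enters at precisely this point: $\kappa$ continuous, $\tau$-periodic and nonvanishing on $S_p$ forces $|\kappa|\geq c>0$ by compactness of $S_p/\tau\mathbb{Z}$, so $1/\kappa$ lies in $\mathcal{H}(S_p)^{\#}$ (the derivative bounds follow from Leibniz applied to $(1/\kappa)^{(m)}$) and corresponds, via Proposition \ref{lp_multiplier}, to an inverse multiplier $\Lambda^{-1}$ on $\mathcal{S}_p(\mathcal{X})$ with $\Lambda\Lambda^{-1}=I$, extending by duality to $\mathcal{S}_p(\mathcal{X})'$.

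From $\Lambda^kT_{-k}=A^kT_0$ we then obtain $T_{-k}=A^k\Lambda^{-k}T_0$. Setting $X=(\Lambda-BI)/B$, one has $\Lambda^{-k}=B^{-k}(I+X)^{-k}$, and since $X^{N+1}T_0=0$ the formal binomial series $(I+X)^{-k}=\sum_{l\geq 0}(-1)^l\binom{k+l-1}{l}X^l$ terminates at $l=N$ when applied to $T_0$, giving
\begin{equation*}
T_{-k}=(A/B)^k\sum_{l=0}^N(-1)^l\binom{k+l-1}{l}B^{-l}(\Lambda-BI)^lT_0.
\end{equation*}
Pairing with $\phi\in\mathcal{S}_p(\mathcal{X})$, noting $|A/B|=1$, and invoking the uniform seminorm bound shows that the resulting expression is a polynomial in $k$ of degree at most $N$ that is bounded on $\mathbb{Z}_+$; forcing the coefficients of $k^N,k^{N-1},\ldots,k$ to vanish in succession yields $\langle(\Lambda-BI)^lT_0,\phi\rangle=0$ for every $1\leq l\leq N$ and every $\phi$, hence $(\Lambda-BI)T_0=0$. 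Plugging this one-sided lemma into Step 2 of the original proof with $T_{-k,i_0}:=\mathbf{P_{i_0}}T_{-k}$ (which inherits all required hypotheses since $\mathbf{P_{i_0}}$ commutes with $\Lambda$ and with the radialization operator, and is continuous on $\mathcal{S}_p(\mathcal{X})$) completes the argument. I expect the main technical obstacle to be checking that $1/\kappa\in\mathcal{H}(S_p)^{\#}$ with uniform control on all derivative seminorms $\mu_{p,m}$; once that is secured, the combinatorial polynomial-boundedness step is routine and the remainder of the proof of Theorem \ref{maximumcase} transfers without change.
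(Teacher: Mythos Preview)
Your argument is correct, but the paper's proof is considerably shorter: it simply observes that since $\kappa$ is nonvanishing on $S_p$, the inverse $\Lambda^{-1}$ (with symbol $1/\kappa\in\mathcal{H}(S_p)^{\#}$) is again a multiplier by Proposition~\ref{lp_multiplier}, and then sets $T'_k=T_{-k}$, $A'=1/A$, $\Lambda'=\Lambda^{-1}$. The relation $\Lambda T_{-k}=AT_{-k+1}$ becomes $\Lambda' T'_j=A' T'_{j+1}$ for $j\in\mathbb{Z}_+$, and the condition $|A|=\max_{S_p}|\kappa|$ (resp.\ $|A|>\max_{S_p}|\kappa|$) translates to $|A'|=\min_{S_p}|\kappa'|$ (resp.\ $|A'|<\min_{S_p}|\kappa'|$). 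Both assertions then follow from Theorem~\ref{minimumcase}, whose statement already only requires a forward one-sided sequence.

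Your route is more hands-on: you leave the proof of Theorem~\ref{maximumcase} in place, note that Part~(b) and Steps~1 and~3 of Part~(a) use only the backward recurrence, and replace the one genuinely two-sided ingredient---Lemma~\ref{generalizedtoeigenfunction}---by an explicit one-sided version proved via the terminating binomial expansion of $\Lambda^{-k}=B^{-k}(I+X)^{-k}$ acting on a distribution annihilated by $X^{N+1}$. This is a valid and self-contained argument; in effect you are reproving Lemma~\ref{generalizedtoeigenfunction} for the inverse operator rather than invoking Theorem~\ref{minimumcase} wholesale. The trade-off is length versus modularity: the paper's reduction is a two-line proof that reuses a theorem already on the shelf, while your approach yields an independent derivation of the key ``drop the exponent'' step that does not pass through the minimum-modulus formulation. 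Both rely on exactly the same use of the nonvanishing hypothesis, namely that $1/\kappa\in\mathcal{H}(S_p)^{\#}$, which, as you note, follows routinely from compactness of $S_p/\tau\mathbb{Z}$ and Leibniz.
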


\begin{proof}
Since  $\kappa(z)$ does not vanish for any $z\in S_{p}$, $\Lambda^{-1}$ defined by the symbol $\kappa(z)^{-1}$ is also a multiplier on $\mathcal{S}_{p}(\mathcal{X})$ by Proposition \ref{lp_multiplier}. Setting $T^{\prime}_{k}=T_{-k}$ for all $k\in\mathbb{Z}_{+}$ and $A^{\prime}=1/A$,  we can reduce the hypothesis to that of Theorem \ref{minimumcase} and the assertion follows from there.
\end{proof}

\section{Strichartz's theorem for a restricted subclass of multipliers}\label{refined}
In this section we shall apply all previously proven results (from Section \ref{pcaseroestrichartz}) to a restricted subclass of multipliers on $\mathcal{S}_{p}(\mathcal{X})$, where the sequence of $L^{p}$-tempered distributions will be replaced by functions having appropriate growth. This will allow us to further decompose the eigenfunctions of the multipliers to eigenfunctions of the Laplacian, which can be realized as Poisson transform of functions on the boundary of $\mathcal{X}$. The following propositions are steps towards this goal.

\begin{Proposition}\label{regulardistributions}
Let $1\leq p_{1}\leq p_{2}< 2$. If $f\in L^{p_{2}^\prime,\infty}(\mathcal{X})$  then there exists a semi-norm $\nu_{p_{1},m}(\cdot)$ of $\mathcal{S}_{p_{1}}(\mathcal{X})$ and a constant $C>0$ such that
$$|\langle f,\phi \rangle|\leq C\|f\|_{p_{2}^\prime,\infty}~\nu_{p_{1},m}(\phi),\quad\text{for all }\phi\in\mathcal{S}_{p_{1}}(\mathcal{X}).$$
\end{Proposition}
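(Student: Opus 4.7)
The plan is to estimate the pairing $\langle f,\phi\rangle=\sum_{x\in\mathcal{X}} f(x)\phi(x)$ by combining the pointwise decay of $\phi$ built into the Schwartz seminorms with the size control of $f$ coming from its weak $L^{p_2'}$ norm. The two analytic inputs I expect to need are Hölder's inequality in Lorentz spaces and the exponential volume growth $\#S(o,n)\asymp q^n$ of the tree.

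From the definition \eqref{schwartzspace} of $\nu_{p_1,m}$, every $\phi\in\mathcal{S}_{p_1}(\mathcal{X})$ satisfies the pointwise bound
$$|\phi(x)|\le \nu_{p_1,m}(\phi)\,(1+|x|)^{-m}\,q^{-|x|/p_1},\qquad x\in\mathcal{X}.$$
Partitioning $\mathcal{X}$ into the spheres $S(o,n)$, I would therefore write
$$|\langle f,\phi\rangle|\le \nu_{p_1,m}(\phi)\sum_{n=0}^{\infty}(1+n)^{-m}\,q^{-n/p_1}\sum_{x\in S(o,n)}|f(x)|.$$

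To control the inner sum I would use Hölder's inequality for the Lorentz pair $L^{p_2',\infty}$, $L^{p_2,1}$. Since $\|\mathbf{1}_E\|_{L^{p_2,1}(\mathcal X)}\asymp (\#E)^{1/p_2}$ for any finite $E\subset\mathcal X$, and $\#S(o,n)\asymp q^n$, this gives
$$\sum_{x\in S(o,n)}|f(x)|\le C\,\|f\|_{p_2',\infty}\,q^{n/p_2}.$$
Substituting back produces the geometric-type estimate
$$|\langle f,\phi\rangle|\le C\,\|f\|_{p_2',\infty}\,\nu_{p_1,m}(\phi)\sum_{n=0}^{\infty}(1+n)^{-m}\,q^{n(1/p_2-1/p_1)}.$$

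Because $p_1\le p_2$, the exponent $1/p_2-1/p_1\le 0$. When $p_1<p_2$ the series converges geometrically for any $m\ge 0$; when $p_1=p_2$ it collapses to $\sum(1+n)^{-m}$, which is finite as soon as $m\ge 2$. Taking $m=2$ therefore works uniformly in the range $1\le p_1\le p_2<2$, with a constant $C$ depending only on $p_1$ and $p_2$, and the proposition follows. The only real subtlety is this borderline case $p_1=p_2$, where the geometric factor degenerates and one must rely on the polynomial tail furnished by choosing $m$ slightly larger than the index the statement would naïvely suggest.
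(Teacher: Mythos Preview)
Your argument is correct. The Lorentz--H\"older step $\sum_{x\in S(o,n)}|f(x)|\le C\|f\|_{p_2',\infty}\,(\#S(o,n))^{1/p_2}$ is valid for $1<p_2<2$, and when $p_2=1$ it reduces to the trivial $L^\infty$ bound; the remaining geometric series is handled exactly as you say.

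The paper takes a shorter but less self-contained route: for the diagonal case $p_1=p_2$ it simply cites \cite[Lemma~4.1]{R3}, and then for $p_1<p_2$ it observes that $\nu_{p_2,m}(\phi)\le\nu_{p_1,m}(\phi)$ (since $q^{|x|/p_2}\le q^{|x|/p_1}$), which immediately transfers the diagonal estimate. Your approach bypasses the external reference and treats both cases uniformly in one computation; the trade-off is that the paper's reduction makes the dependence on the earlier literature explicit, while your version makes the mechanism (sphere decomposition plus Lorentz duality) fully visible within the proof.
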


\begin{proof}
When $p_{1}=p_{2}$, we refer to \cite[Lemma 4.1]{R3} for the proof. Now using this result and noting that $\nu_{p_{2},m}(\phi)\leq \nu_{p_{1},m}(\phi)$ for all $\phi\in\mathcal{S}_{p_{1}}(\mathcal{X})$, the desired result follows whenever $p_{1}<p_{2}$.
\end{proof}

Our next aim is to derive certain size estimates of the derivatives of $z\mapsto\phi_{z}$. Below,  $D^{m} = d^{m}/dz^{m}$ and $D^{0}=I$. Differentiating the explicit expression (\ref{eqsf}) of the elementary spherical function $m$ times and using the Leibniz rule, we get, for $z\in\mathbb{C}\setminus(\tau/2)\mathbb{Z}$,
\begin{equation}\label{derivativeexpression2}
D^{m}\phi_{z}(x)=\sum\limits_{j=0}^{m}\binom{m}{j}(i|x|\log q)^{j}q^{-|x|/2} \left(q^{iz|x|}D^{(m-j)}\mathbf{c}(z)+(-1)^{j}q^{-iz|x|}D^{(m-j)}\mathbf{c}(-z)\right).
\end{equation}

We need the following lemma for the next proposition (see \cite{KR} for details).
\begin{Lemma}[{{\cite[Lemma 2.5]{KR}}}]\label{lemmaweak}
If $1<p<2$ then for every $f\in L^{p^{\prime},\infty}(\mathcal{X})$, there exists a positive constant $C$ independent of $f$ such that
$$\frac{1}{n}\sum\limits_{x\in B(o,n)}|f(x)|^{p^{\prime}}\leq C\|f\|^{p^{\prime}}_{L^{p^{\prime},\infty}},\quad\text{for all }n\in\mathbb{N}.$$
\end{Lemma}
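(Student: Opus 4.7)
The plan is to rewrite the sum via the layer-cake representation and then split the resulting integral at a threshold balancing the trivial bound on level sets against the weak-type bound coming from $\|f\|_{p',\infty}$. A preliminary discreteness observation is useful: since the counting measure assigns mass one to every singleton, applying $t\,d_f(t)^{1/p'}\le\|f\|_{p',\infty}$ to $t$ slightly less than $|f(x_0)|$ forces $|f(x)|\le\|f\|_{p',\infty}$ for every $x\in\mathcal{X}$. Hence the layer cake formula
$$\sum_{x\in B(o,n)}|f(x)|^{p'} \;=\; p'\int_0^{\|f\|_{p',\infty}} t^{p'-1}\,\#\{x\in B(o,n):|f(x)|>t\}\,dt$$
has a finite upper endpoint of integration, which is what makes the argument below converge.

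The main step is the splitting. I would estimate the level-set cardinality by the trivial bound $\#B(o,n)$ on a small interval $[0,M]$ and by the weak-type bound $d_f(t)\le\|f\|_{p',\infty}^{p'}t^{-p'}$ on $[M,\|f\|_{p',\infty}]$. Choosing $M:=\|f\|_{p',\infty}/(\#B(o,n))^{1/p'}$ so that the two bounds coincide at $t=M$, a direct computation gives
$$\sum_{x\in B(o,n)}|f(x)|^{p'} \;\le\; \#B(o,n)\cdot M^{p'} \,+\, p'\,\|f\|_{p',\infty}^{p'}\int_M^{\|f\|_{p',\infty}}\frac{dt}{t} \;=\; \|f\|_{p',\infty}^{p'}\bigl(1+\log\#B(o,n)\bigr),$$
since the first summand simplifies to $\|f\|_{p',\infty}^{p'}$ and the logarithmic integral evaluates to $(1/p')\log\#B(o,n)$.

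To conclude, the exponential volume growth $\#B(o,n)\asymp q^n$ recorded in Section \ref{Section 2} yields $\log\#B(o,n)\le Cn$ for some constant $C$ depending only on $q$, and dividing through by $n$ produces the claimed inequality. There is no genuine analytical obstacle; the only subtle choice is the split point $M$, and the extra factor $n$ on the left-hand side is precisely the logarithmic loss one expects, reflecting the well-known failure of the embedding $L^{p',\infty}\hookrightarrow L^{p'}$ on sets of positive measure. The hypothesis $1<p<2$ (so $p'>2$) is not actually needed for this estimate; the argument works for any $p'\in(0,\infty)$ because the integrations are all finite once the truncation at $\|f\|_{p',\infty}$ is exploited.
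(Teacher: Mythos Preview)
Your proof is correct. The paper does not actually prove this lemma; it simply quotes it from \cite{KR} and refers the reader there for details. Your argument is the standard layer-cake decomposition for weak Lebesgue spaces: the pointwise bound $|f|\le\|f\|_{p',\infty}$ forced by the discreteness of the counting measure truncates the layer-cake integral, the split at $M=\|f\|_{p',\infty}/(\#B(o,n))^{1/p'}$ balances the trivial volume bound against the weak-type bound, and the exponential growth $\#B(o,n)\asymp q^n$ converts the resulting $\log\#B(o,n)$ into the linear factor $n$. Your closing remark is also accurate: the restriction $1<p<2$ plays no role in this particular estimate and is present only because that is the range relevant to the surrounding discussion.
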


\begin{Proposition}\label{derivativeestimates}
Let $1\leq p<2$. Suppose that $z_{0}=\alpha+i\delta_{p^{\prime}}$, $\alpha\in\mathbb{R}$. If for a polynomial $P$ with complex coefficients, $P(D)\phi_{z}|_{z=z_{0}}\in L^{p^{\prime},\infty}(\mathcal{X})^{\#}$ then $P$ is a constant polynomial.
\end{Proposition}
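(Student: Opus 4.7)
The plan is to argue by contradiction: assume $\deg P = N \ge 1$, write $P(D) = \sum_{m=0}^{N} c_m D^m$ with $c_N \neq 0$, and set $f(x) := P(D)\phi_z(x)|_{z=z_0}$. The first step is to check that $\mathbf{c}$ is holomorphic and nonvanishing at $z_0$: since $p \ne 2$, we have $\Im z_0 = \delta_{p'} \in [-1/2,0)$, so $z_0$ avoids both the poles of $\mathbf{c}$ (which from \eqref{harishchandra} lie in $(\tau/2)\Z \subset \R$) and its zeros (which lie on the line $\Im z = 1/2$). In particular $z_0 \notin (\tau/2)\Z$, so the third branch of \eqref{eqsf} and the derivative expansion \eqref{derivativeexpression2} are valid at $z_0$, and $\mathbf{c}(z_0)\neq 0$.

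Next I would substitute $z = z_0 = \alpha + i \delta_{p'}$ into \eqref{derivativeexpression2} and group the two exponentials. Using the identities $\delta_{p'} - \tfrac12 = -\tfrac{1}{p'}$ and $-\delta_{p'} - \tfrac12 = -\tfrac{1}{p}$, $f$ rewrites as
\[
f(x) \;=\; q^{i\alpha |x|}\, q^{-|x|/p'}\, \mathcal{A}(|x|) \;+\; q^{-i\alpha|x|}\, q^{-|x|/p}\, \mathcal{B}(|x|),
\]
where $\mathcal{A}, \mathcal{B}$ are polynomials in $r = |x|$. Interchanging the summations in \eqref{derivativeexpression2} shows that the coefficient of $r^N$ in $\mathcal{A}$ equals $(i\log q)^N c_N \mathbf{c}(z_0)$, which is nonzero by the previous paragraph; hence $\deg \mathcal{A} = N$ and $|\mathcal{A}(|x|)| \ge C|x|^N$ for $|x|$ sufficiently large. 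Because $p < p'$, the second summand decays strictly faster (by a factor $q^{|x|(1/p'-1/p)} \to 0$), so one obtains the pointwise lower bound
\[
|f(x)| \;\ge\; \tfrac{C}{2}\,|x|^N\, q^{-|x|/p'} \qquad \text{for } |x| \ge R,
\]
for some threshold $R$.

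To derive a contradiction I would split on $p$. For $1 < p < 2$, Lemma \ref{lemmaweak} applied to $f \in L^{p',\infty}(\mathcal{X})^{\#}$ forces $\tfrac{1}{n}\sum_{x \in B(o,n)} |f(x)|^{p'}$ to stay bounded in $n$; but combining the previous lower bound with $\# S(o,k) \asymp q^k$ yields
\[
\sum_{x \in B(o,n)} |f(x)|^{p'} \;\gtrsim\; \sum_{k=R}^{n} q^k \cdot k^{Np'}\, q^{-k} \;\asymp\; n^{Np'+1},
\]
so the average grows like $n^{Np'} \to \infty$, contradicting the lemma when $N \ge 1$. For $p = 1$ we have $L^{p',\infty} = L^{\infty}$ and $q^{-|x|/p'} = 1$, so the pointwise bound $|f(x)| \gtrsim |x|^N$ directly contradicts boundedness. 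In both cases $N = 0$, i.e.\ $P$ is constant. The step I expect to be the most delicate is the combinatorial bookkeeping that identifies the top coefficient of $\mathcal{A}$ through the double sum, together with the argument that $\mathbf{c}(z_0) \ne 0$ — the latter relies crucially on $z_0$ sitting on the lower boundary $\Im z = \delta_{p'} < 0$ of $S_p$, separated from the zero locus $\{\Im z = 1/2\}$ of $\mathbf{c}$.
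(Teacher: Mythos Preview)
Your proof is correct and follows essentially the same route as the paper's: both insert $z_0$ into the derivative expansion \eqref{derivativeexpression2}, isolate the dominant term $q^{-|x|/p'}$ times a polynomial of exact degree $N$ in $|x|$ (using $\mathbf{c}(z_0)\neq 0$), and then contradict membership in $L^{p',\infty}$ via Lemma~\ref{lemmaweak} for $1<p<2$ and via unboundedness for $p=1$. The only cosmetic difference is that the paper factors $f(x)=(i|x|\log q)^{N}q^{(i\alpha-1/p')|x|}g(x)$ and shows $|g(x)|\to|\mathbf{c}(z_0)||c_N|$, whereas you split $f$ additively into the $q^{-|x|/p'}$ and $q^{-|x|/p}$ pieces; these are equivalent rearrangements of the same computation.
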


\begin{proof}
Let $P(D)=c_0I+c_1D+\cdots+c_ND^N$,  where $N\in\mathbb{N}$ and $c_{1},\cdots,c_{N}\in \C$ are not all zero. Let $k$ be the  largest among $\{1, \ldots, N\}$
such that $c_{k}\neq 0$. We consider the function
$$f(x)=\sum\limits_{m=0}^{k}c_{m}D^{m}\phi_{z}(x)|_{z=z_{0}},\quad\text{for all }x\in\mathcal{X}.$$
Substituting $z_{0}=\alpha+i\delta_{p^{\prime}}$ in (\ref{derivativeexpression2}), it follows that $f(x)=(i|x|\log q)^{k}q^{(i\alpha-1/p^{\prime})|x|}g(x)$, where
\begin{multline*}
g(x)=\sum\limits_{m=0}^{k}c_{m}\left[\sum\limits_{j=0}^{m}\binom{m}{j}(i|x|\log q)^{j-k}\left.\left(D^{(m-j)}\mathbf{c}(z_{0})\right.\right.\right.\\
\left.\left.+(-1)^{j}q^{-2i\alpha|x|}q^{(1-2/p)|x|}D^{(m-j)}\mathbf{c}(-z_{0})\right)\right].
\end{multline*}
Clearly, $|g(x)|\rightarrow|\mathbf{c}(z_{0})||c_{k}|$ as $|x|\rightarrow\infty$. Furthermore, our assumption on $c_{k}$ and the choice of $z_{0}$ (see (\ref{harishchandra})) implies that the limit is non-zero. Therefore there exist a constant $C>0$ and $M\in\mathbb{N}$ such that
\begin{equation}\label{derivativepointwise}
|f(x)|\geq C|\mathbf{c}(z_{0})||c_{k}| |x|^{k}q^{-|x|/p^{\prime}}, \quad\text{for all }x\in\mathcal{X}\text{ such that }|x|\geq M.
\end{equation}
When $p=1$, the expression above  contradicts the fact that $f\in L^{\infty}(\mathcal{X})^{\#}$. For $1<p<2$, from (\ref{derivativepointwise}) and Lemma \ref{lemmaweak}, we get $n^{kp^{\prime}-1}\leq C\|f\|^{p^{\prime}}_{L^{p^{\prime},\infty}(\mathcal{X})}$ for all $n\geq M$. This contradicts that $f\in L^{p^{\prime},\infty}(\mathcal{X})^{\#}$. Hence $c_{1},\cdots,c_{N}$ are all zero and $P$ is a constant polynomial.
\end{proof}

To proceed further, we need a boundary representation of the generalized eigenfunctions of the Laplacian $\mathcal{L}$ on $\mathcal{X}$. If a function $f$ on $\mathcal{X}$ satisfies $\mathcal{L}^{N}f=0$, Cohen et al. \cite[Theorem 4.1]{CCGS} proved that $f$ can be written as sum of the derivatives of $\mathcal{P}_{i\delta_{\infty}}\lambda$, where $\lambda$ is a finitely additive measure on $\Omega$, that is, a complex-valued set function on the Borel $\sigma$-algebra generated by the subsets $E(x)$ (see (\ref{sectors})) of $\Omega$ satisfying finite additivity, and
\begin{equation}\label{poissonmeasure}
\mathcal{P}_{z}\lambda(x)=\int\limits_{\Omega}p^{1/2+iz}(x,\omega)d\lambda(\omega),\quad\text{for all }x\in\mathcal{X}.
\end{equation}
Thereafter, Picardello et al. \cite{PW} extended this result for the generalized eigenfunctions of $\mathcal{L}$ corresponding to arbitrary complex numbers $\gamma(z)$. Before stating the result precisely, we would like to point out certain important differences between our notation and those in \cite{PW}: the Laplacian $\mathcal{L}$ defined by (\ref{laplaciandefinition}) corresponds to the operator $(I-P)$ in \cite[p. 557]{PW}, the function $\gamma(z)$ defined by (\ref{gammaz}) corresponds to $1-\lambda$ in \cite[p. 557]{PW} and the height function $h_{\omega}(x)$ (in Subsection \ref{laplacianpoissontransform}) corresponds to $-\mathfrak{h}(x,\xi)$ in \cite[p. 557]{PW}. Further, the function $K(x,\xi|\lambda)$ from \cite[p. 557]{PW} corresponds to $p^{1/2+iz}(x,\omega)$ in (\ref{poissonmeasure}), where $1-\lambda=\gamma(z)$ and $\Im z<0$. In view of this,  the integrand in \cite[Corollary 5.4]{PW} can be written in our notation as:
$$K(x,\xi|\lambda)~\mathfrak{h}(x,\xi)^{k}=\frac{(-1)^{k}}{(i\log q)^{k}}~D^{k}p^{1/2+iz}(x,\omega),\quad\text{for all }k\in\mathbb{Z}_{+}.$$
Finally, applying the dominated convergence theorem to the integrals in \cite[Corollary 5.4]{PW} and using the expression above, we get the following variant:

\begin{Proposition}[{{\cite[Theorem 5.3 and Corollary 5.4]{PW}}}]\label{generalizedeigenfunction}
Let $z_{0}=\alpha+i\delta_{p^{\prime}}$ for some $\alpha\in\mathbb{R}$ and for some $1\leq p<2$. A complex-valued function $f$ on $\mathcal{X}$ satisfies $(\mathcal{L}-\gamma(z_{0})I)^{N}f=0$ for some $N\in\mathbb{N}$ if and only if there exist finitely additive measures $\lambda_{0},\ldots,\lambda_{N-1}$ on $\Omega$ such that
\begin{equation}\label{generalizedeigenfunctions2}
f(x)=\sum\limits_{m=0}^{N-1}D^{m}(\mathcal{P}_{z}\lambda_{m})(x)|_{z=z_{0}},\quad\text{for all }x\in\mathcal{X}.
\end{equation}
\end{Proposition}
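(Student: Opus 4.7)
The plan is to derive this as a direct translation of \cite[Theorem 5.3 and Corollary 5.4]{PW} into our notation, together with one justification of passing a $z$-derivative inside the boundary integral. The starting point is the formula $p^{1/2+iz}(x,\omega) = q^{(1/2+iz)h_{\omega}(x)}$, which upon differentiating $k$ times in $z$ yields
\begin{equation*}
D^{k} p^{1/2+iz}(x,\omega) = (i\log q)^{k}\, h_{\omega}(x)^{k}\, p^{1/2+iz}(x,\omega).
\end{equation*}
Matching this against the correspondences $\mathcal{L} \leftrightarrow I - P$, $\gamma(z) \leftrightarrow 1 - \lambda$, $h_{\omega}(x) \leftrightarrow -\mathfrak{h}(x,\xi)$, and $p^{1/2+iz}(x,\omega) \leftrightarrow K(x,\xi|\lambda)$, the integrand $K(x,\xi|\lambda)\mathfrak{h}(x,\xi)^{k}$ appearing in \cite[Corollary 5.4]{PW} becomes $(-1)^{k}(i\log q)^{-k} D^{k} p^{1/2+iz}(x,\omega)$.

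For the forward implication, I would invoke \cite[Theorem 5.3, Corollary 5.4]{PW} directly to write $f$ as a finite sum of integrals of the form $\int_{\Omega} h_{\omega}(x)^{m} p^{1/2+iz_{0}}(x,\omega)\, d\lambda_{m}(\omega)$ for $m = 0, \ldots, N-1$, where $\lambda_{m}$ are finitely additive measures on $\Omega$. To rewrite each such integral as $D^{m}(\mathcal{P}_{z}\lambda_{m})(x)|_{z=z_{0}}$, one needs to interchange $D^{m}$ with the integral against a finitely additive measure. Since $x$ is fixed, the set of confluence levels $|c(x,\omega)|$ takes only finitely many values as $\omega$ ranges over $\Omega$, so $\omega \mapsto p^{1/2+iz}(x,\omega)$ is a finite simple function on $\Omega$; the integral against $\lambda_{m}$ is thus a finite linear combination of values of $p^{1/2+iz}(x,\omega_{j})$, and differentiation under the integral sign is automatic with no analytic subtlety.

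For the converse, I would verify that every summand $D^{m}(\mathcal{P}_{z}\lambda_{m})(x)|_{z=z_{0}}$ lies in the kernel of $(\mathcal{L}-\gamma(z_{0})I)^{N}$. Differentiate the eigenfunction identity $\mathcal{L} p^{1/2+iz}(x,\omega) = \gamma(z) p^{1/2+iz}(x,\omega)$ $m$ times in $z$ and apply the Leibniz rule to get
\begin{equation*}
(\mathcal{L} - \gamma(z)I)^{N} D^{m} p^{1/2+iz}(x,\omega) = \sum_{\ell} c_{\ell}(z)\, D^{\ell}p^{1/2+iz}(x,\omega),
\end{equation*}
where each coefficient $c_{\ell}(z)$ is a polynomial in derivatives of $(\gamma(z)-\gamma(z_{0}))$ of total order at least $N-m+1 > 0$ at $z=z_{0}$ whenever $m \leq N-1$; hence $c_{\ell}(z_{0}) = 0$ for all $\ell$, so $(\mathcal{L}-\gamma(z_{0})I)^{N} D^{m}p^{1/2+iz}|_{z=z_{0}} = 0$ pointwise, and integrating against $\lambda_{m}$ preserves this.

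The main obstacle is the bookkeeping in the Leibniz expansion for the converse direction, and the careful sign and normalization check in going between the $(-1)^{k}(i\log q)^{-k}$ factor and the form of the $\lambda_{m}$ emerging from \cite{PW}; these are absorbed into the (arbitrary) finitely additive measures $\lambda_{m}$ and so do not affect the statement.
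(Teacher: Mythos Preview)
Your proposal is correct and follows essentially the same route as the paper: the proposition is not proved from scratch but obtained by translating \cite[Theorem 5.3, Corollary 5.4]{PW} via the notational dictionary and then justifying the interchange of $D^{m}$ with the boundary integral. The paper invokes the dominated convergence theorem for that interchange; your observation that for fixed $x$ the height $h_{\omega}(x)=2|c(x,\omega)|-|x|$ takes only finitely many values, so that $\omega\mapsto p^{1/2+iz}(x,\omega)$ is a simple function, is a more elementary and arguably cleaner justification of the same step.

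One small correction in your converse computation: the operator should be $(\mathcal{L}-\gamma(z_{0})I)^{N}$, not $(\mathcal{L}-\gamma(z)I)^{N}$. The clean way to run the argument is to note that $(\mathcal{L}-\gamma(z_{0})I)^{N}p^{1/2+iz}(x,\omega)=(\gamma(z)-\gamma(z_{0}))^{N}p^{1/2+iz}(x,\omega)$, differentiate $m$ times in $z$, and observe via Leibniz that every term contains a factor $D^{j}\big|_{z=z_{0}}(\gamma(z)-\gamma(z_{0}))^{N}$ with $j\leq m\leq N-1<N$, which vanishes. Your phrasing in terms of ``total order at least $N-m+1$'' obscures this, but the conclusion is the same.
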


An immediate corollary of Proposition \ref{generalizedeigenfunction} is the following:
\begin{Corollary}\label{radialgeneralizedeigenfunction}
Let $z_{0}=\alpha+i\delta_{p^{\prime}}$ for some $\alpha\in\mathbb{R}$ and for some $1\leq p<2$. A radial function $f$ on $\mathcal{X}$ satisfies $(\mathcal{L}-\gamma(z_{0})I)^{N}f=0$ for some $N\in\mathbb{N}$ if and only if there exist complex constants $c_{0},c_{1},\ldots,c_{N-1}$ such that
\begin{equation}\label{radialgeneralizedeigenfunctions2}
f(x)=\sum\limits_{m=0}^{N-1}c_{m}D^{m}\phi_{z}(x)|_{z=z_{0}},\quad\text{for all }x\in\mathcal{X}.
\end{equation}
\end{Corollary}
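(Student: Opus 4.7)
The ``if'' direction is essentially calculus. Starting from $\mathcal{L}\phi_z = \gamma(z)\phi_z$ (which is an identity in $z$), I would differentiate $m$ times with respect to $z$ and apply Leibniz's rule to get
\[
\mathcal{L}(D^m \phi_z)(x) = \sum_{k=0}^{m}\binom{m}{k}\gamma^{(k)}(z)\,D^{m-k}\phi_z(x).
\]
Rearranging, $(\mathcal{L} - \gamma(z_0)I) D^m \phi_z\big|_{z=z_0}$ is a linear combination of $D^{\ell}\phi_z\big|_{z=z_0}$ for $0 \leq \ell \leq m-1$. A straightforward induction on $m$ then shows $(\mathcal{L}-\gamma(z_0)I)^N D^m \phi_z\big|_{z=z_0} = 0$ for all $0 \leq m \leq N-1$, which handles linear combinations \eqref{radialgeneralizedeigenfunctions2}.

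For the converse, the plan is to invoke Proposition \ref{generalizedeigenfunction} and then radialize. Since $f$ satisfies $(\mathcal{L}-\gamma(z_0)I)^N f = 0$, the proposition yields finitely additive measures $\lambda_0, \ldots, \lambda_{N-1}$ on $\Omega$ with
\[
f(x) = \sum_{m=0}^{N-1} D^m(\mathcal{P}_z \lambda_m)(x)\big|_{z=z_0}.
\]
Because $f$ is radial, $f = \mathscr{R}f$. Applying $\mathscr{R}$ to both sides and commuting it past $D^m$ (justified since $z\mapsto p^{1/2+iz}(x,\omega)$ is entire, $|x|$ is fixed, and $\mathscr{R}$ is a finite average over the compact group $K$), the problem reduces to computing $\mathscr{R}(\mathcal{P}_z \lambda_m)$.

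The key computation is identifying this radialization with a multiple of $\phi_z$. Using the $K$-invariance $p(k\cdot x, k\cdot \omega) = p(x,\omega)$, which holds because the height function $h_\omega(x)=2|c(x,\omega)|-|x|$ is defined from the $K$-fixed basepoint $o$, and then applying Fubini (legitimate since $K$ is compact and $\lambda_m$ has finite total variation on the compact space $\Omega$), I would rewrite
\[
\mathscr{R}(\mathcal{P}_z \lambda_m)(x) = \int_\Omega \left(\int_K p^{1/2+iz}(x, k\cdot \omega)\, dk\right) d\lambda_m(\omega).
\]
Since $K$ acts transitively on $\Omega$ and $\nu$ is the unique $K$-invariant probability measure on $\Omega$, the pushforward of Haar measure on $K$ under $k\mapsto k\cdot \omega$ equals $\nu$ for every fixed $\omega$, so the inner integral equals $\mathcal{P}_z\mathbf{1}(x) = \phi_z(x)$, independently of $\omega$. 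Consequently $\mathscr{R}(\mathcal{P}_z\lambda_m)(x) = \lambda_m(\Omega)\,\phi_z(x)$, and setting $c_m := \lambda_m(\Omega)$ produces the required representation \eqref{radialgeneralizedeigenfunctions2}.

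The main potential obstacle is verifying that the radialization of the Poisson integral of a finitely additive (rather than countably additive) measure does indeed collapse to $\lambda_m(\Omega)\phi_z$; but since $\int_K p^{1/2+iz}(x, k\cdot\omega)\,dk$ is literally constant in $\omega$, the ``integral'' against $\lambda_m$ needs nothing beyond finite additivity and a uniform bound of $p^{1/2+iz}(x,\cdot)$ on $\Omega$, so no additional regularity of $\lambda_m$ is required. Everything else is routine once Proposition \ref{generalizedeigenfunction} is in hand.
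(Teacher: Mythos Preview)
Your argument is correct and follows the same overall architecture as the paper: invoke Proposition~\ref{generalizedeigenfunction}, radialize, commute $\mathscr{R}$ with $D^m$, and reduce to the identity $\mathscr{R}(\mathcal{P}_z\lambda_m)=\lambda_m(\Omega)\,\phi_z$. The one substantive difference is how that last identity is established. You compute it directly via $K$-invariance of the Poisson kernel and a Fubini-type swap, arguing that the pushforward of Haar measure under $k\mapsto k\cdot\omega$ is $\nu$ so that the inner integral collapses to $\phi_z(x)$ independently of $\omega$. The paper instead observes that $\mathscr{R}(\mathcal{P}_{z}\lambda_m)$ is a radial eigenfunction of $\mathcal{L}$ with eigenvalue $\gamma(z)$ (since $\mathcal{L}$ commutes with $\mathscr{R}$), hence a scalar multiple of $\phi_z$ by the uniqueness of radial eigenfunctions, and then evaluates at $o$ to pin down the constant as $\lambda_m(\Omega)$. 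The paper's route is slightly cleaner in that it sidesteps any concern about interchanging integrals against a merely finitely additive $\lambda_m$; your route also works, but the honest justification is not ``Fubini'' per se but rather that for fixed $x$ the $K$-average reduces to a finite average over the sphere $S(o,|x|)$ and the Poisson kernel is a simple function of $\omega$, so both integrals are finite sums and linearity suffices. Your ``if'' direction via Leibniz and induction is a fine alternative to the paper's one-line appeal to Proposition~\ref{generalizedeigenfunction} with $\lambda_m=c_m\nu$.
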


\begin{proof}
If a complex-valued function $f$ is of the form (\ref{radialgeneralizedeigenfunctions2}) then using Proposition \ref{generalizedeigenfunction} with $\lambda_{m}=c_{m}\nu$, where $c_{m}\in\mathbb{C}$ and $m=0,\ldots,N-1$, we get $(\mathcal{L}-\gamma(z_{0})I)^{N}f=0$. The fact that $f$ is radial is an easy consequence of the explicit expression (\ref{derivativeexpression2}).

Conversely, if we assume that $f$ satisfies $(\mathcal{L}-\gamma(z_{0})I)^{N}f=0$ for some $N\in\mathbb{N}$ then according to Proposition \ref{generalizedeigenfunction}, there exist finitely additive measures $\lambda_{0},\ldots,\lambda_{N-1}$ on $\Omega$ such that (\ref{generalizedeigenfunctions2}) holds. Now using the hypothesis that $f$ is radial and hence $f(x)=\mathscr{R}f(x)$ in (\ref{generalizedeigenfunctions2}), we get
\begin{equation}\label{generalizedeigenfunctions3}
f(x)=\sum\limits_{m=0}^{N-1}\mathscr{R}(D^{m}(\mathcal{P}_{z}\lambda_{m}))(x)|_{z=z_{0}},\quad\text{for all }x\in\mathcal{X}.
\end{equation}

Since $\mathcal{L}$ commutes with the radialization operator $\mathscr{R}$ (see (\ref{radializationoperator})) and $\mathcal{L}(\mathcal{P}_{z_{0}}\lambda)= \gamma(z_{0})\mathcal{P}_{z_{0}}\lambda$ for every finitely additive measure $\lambda$ on $\Omega$, it follows that $\mathscr{R}(\mathcal{P}_{z_{0}}\lambda))$ is a radial eigenfunction of $\mathcal{L}$ with eigenvalue $\gamma(z_{0})$. Consequently, $\mathscr{R}(\mathcal{P}_{z_{0}}\lambda)=c\phi_{z_{0}}$. Putting $x=o$ in the previous expression and using the fact that $\phi_{z_{0}}(o)=1$, we have
\begin{equation}\label{generalizedeigenfunctions4}
\mathscr{R}(\mathcal{P}_{z_{0}}\lambda)(x)=\lambda(\Omega)~\phi_{z_{0}}(x),\quad\text{for all }x\in\mathcal{X}.
\end{equation}
Furthermore, by a standard use of the dominated convergence theorem we obtain $\mathscr{R}(D^{m}(\mathcal{P}_{z}\lambda))|_{z=z_{0}}=D^{m}(\mathscr{R}(\mathcal{P}_{z}\lambda))|_{z=z_{0}}$ for all $m\in\mathbb{Z}_{+}$. Combining this with (\ref{generalizedeigenfunctions4}), we get
\begin{equation}\label{generalizedeigenfunctions5}
\mathscr{R}(D^{m}(\mathcal{P}_{z}\lambda))(x)|_{z=z_{0}}=\lambda(\Omega)~D^{m}\phi_{z}(x)|_{z=z_{0}},\quad\text{for all }x\in\mathcal{X},\text{ and for all }m\in\mathbb{Z}_{+}.
\end{equation}
Finally, the desired result follows by using (\ref{generalizedeigenfunctions5}) on the right hand side of (\ref{generalizedeigenfunctions3}).
\end{proof}

Let $\Psi$ be a nonconstant holomorphic  function defined on a connected open set $O\subseteq\mathbb{C}$ containing $\gamma(S_{p})$. Then it can be verified  that $\Psi\circ\gamma$ is in $\mathcal{H}(S_{p})^{\#}$ for all $1\leq p< 2$. Hence by Proposition \ref{lp_multiplier}, $\Psi\circ\gamma$ corresponds to a multiplier on $\mathcal{S}_{p}(\mathcal{X})$, which will be denoted by $\Psi(\mathcal{L})$. With this preparation, we now offer the following main results of this section. 

\begin{Theorem}\label{pcaseforfunctionsoflaplacianmaximumzero}
For $1\leq p<2$, let $\Psi(\mathcal{L})$ be a multiplier on $\mathcal{S}_{p}(\mathcal{X})$ associated with the symbol $\Psi\circ\gamma$. Suppose that $\{f_{k}\}_{k\in\mathbb{Z}}$ is a bi-infinite sequence of functions on $\mathcal{X}$ satisfying for all $k\in\mathbb{Z}$:
\begin{enumerate}
\item[(i)] $\Psi(\mathcal{L}) f_{k}=A~f_{k+1}$ for some $A\in\mathbb{C}^{\times}$.
\item[(ii)] $\|f_{k}\|_{L^{p^{\prime},\infty}(\mathcal{X})}\leq M$ for some constant $M>0$.
\end{enumerate}
If $|A|=\max\{|\Psi\circ\gamma(z)|:z\in S_{p}\}$ and if the range of $\Psi\circ\gamma$ intersects $\{w\in\mathbb{C}:|w|=|A|\}$ at finitely many distinct points, then $f_{0}$ can be uniquely written as $f_{0}=g_{1}+\cdots+g_{m}$ for some $g_{l}\in L^{p^{\prime},\infty}(\mathcal{X})$, $l=1,\ldots,m$, satisfying $\mathcal{L}g_{l}=\gamma(\alpha_{l}+i\delta_{p^{\prime}})g_{l}$, where $-\tau/2< \alpha_{l}\leq\tau/2$ are distinct and $|\Psi\circ\gamma(\alpha_{l}+i\delta_{p^{\prime}})|=|A|$. In particular, for all $l=1,\ldots,m$, $g_{l}=\mathcal{P}_{\alpha_{l}+i\delta_{p^{\prime}}}F_{l}$ for some $F_{l}\in L^{p^{\prime}}(\Omega)$.
\end{Theorem}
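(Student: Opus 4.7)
\textbf{Step 1 (lift to tempered distributions and invoke Theorem \ref{maximumcase}).} Since $f_k\in L^{p',\infty}(\mathcal{X})$ for every $k$, Proposition \ref{regulardistributions} realises each $f_k$ as an $L^p$-tempered distribution with $|\langle f_k,\phi\rangle|\le CM\,\nu_{p,m}(\phi)$ for a single seminorm, uniformly in $k$. Hypothesis (i) is the recurrence $\Psi(\mathcal{L})f_k=A f_{k+1}$ in the distributional sense, so the sequence satisfies the hypotheses of Theorem \ref{maximumcase}(a) with $\Lambda=\Psi(\mathcal{L})$. Combining its conclusion with Remark \ref{remark-on-weak-lp}(1) (which upgrades the decomposition of a weak $L^{p'}$-function to weak $L^{p'}$-pieces) yields the unique decomposition
\[
f_0 = f_{0,1}+\cdots+f_{0,j},\qquad f_{0,i}\in L^{p',\infty}(\mathcal{X}),\quad \Psi(\mathcal{L})f_{0,i}=A_i f_{0,i},
\]
where $A_1,\ldots,A_j$ are the distinct points of $(\Psi\circ\gamma)(S_p)$ with $|A_i|=|A|$.

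\textbf{Step 2 (functional-calculus factorisation).} Fix $i$. Since $\Psi\circ\gamma$ is non-constant, holomorphic on $S_p^{\circ}$ and continuous on $S_p$, and attains the value $A_i$ of maximum modulus, the maximum modulus principle puts every preimage of $A_i$ in $S_p$ on $\partial S_p$; by $\tau$-periodicity only finitely many occur modulo period. Let $\mu_{i,1},\ldots,\mu_{i,L_i}$ be the distinct values taken by $\gamma$ on this preimage set, and $N_{i,l}$ the order of $\mu_{i,l}$ as a zero of $\Psi(w)-A_i$. Shrinking the open set $O$ on which $\Psi$ is defined to a neighbourhood $O'$ of the compact set $\gamma(S_p)$ whose only zeros of $\Psi-A_i$ are the $\mu_{i,l}$, one obtains
\[
\Psi(w)-A_i \;=\; Q_i(w)\,H_i(w),\qquad Q_i(w)=\prod_{l=1}^{L_i}(w-\mu_{i,l})^{N_{i,l}},
\]
with $H_i$ holomorphic and nowhere zero on $O'$. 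By Example \ref{example-multi}(ii), $H_i(\mathcal{L})$ and $(1/H_i)(\mathcal{L})$ are both multipliers on $\mathcal{S}_p(\mathcal{X})$, they commute with $Q_i(\mathcal{L})$, and $\Psi(\mathcal{L})-A_i I=Q_i(\mathcal{L})H_i(\mathcal{L})$. Applying $H_i(\mathcal{L})^{-1}$ to $(\Psi(\mathcal{L})-A_iI)f_{0,i}=0$ yields
\[
\prod_{l=1}^{L_i}(\mathcal{L}-\mu_{i,l}I)^{N_{i,l}}\,f_{0,i}=0.
\]

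\textbf{Step 3 (from generalised to genuine eigenfunctions).} Applying Remark \ref{remark-on-weak-lp}(1) with $\Lambda=\mathcal{L}$ to the displayed identity gives $f_{0,i}=g_{i,1}+\cdots+g_{i,L_i}$ with $g_{i,l}\in L^{p',\infty}(\mathcal{X})$ and $(\mathcal{L}-\mu_{i,l}I)^{N_{i,l}}g_{i,l}=0$. To sharpen this to $(\mathcal{L}-\mu_{i,l}I)g_{i,l}=0$, I use the radialisation trick of Step 3 of Theorem \ref{maximumcase}: for every $g\in G$, $\mathscr{R}(\tau_g g_{i,l})$ is a radial weak $L^{p'}$-function satisfying the same generalised eigenvalue equation, hence by Corollary \ref{radialgeneralizedeigenfunction} it is a $\mathbb{C}$-linear combination of $D^m\phi_z|_{z=z_{i,l}}$, $0\le m<N_{i,l}$, where $z_{i,l}=\alpha_{i,l}+i\delta_{p'}$ is the chosen preimage with $\gamma(z_{i,l})=\mu_{i,l}$. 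Proposition \ref{derivativeestimates} forces this polynomial in $D$ to be constant, so $\mathscr{R}(\tau_g g_{i,l})$ is a scalar multiple of $\phi_{z_{i,l}}$, whence $(\mathcal{L}-\mu_{i,l}I)\mathscr{R}(\tau_g g_{i,l})=0$ for all $g$. Since $\mathcal{L}$ commutes with $\tau_g$ and $\mathscr{R}$, and $\mathscr{R}(\tau_g u)(o)=u(g^{-1}\cdot o)$, transitivity of $G$ on $\mathcal{X}$ gives $(\mathcal{L}-\mu_{i,l}I)g_{i,l}=0$.

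\textbf{Step 4 (final assembly).} Writing $\mu_{i,l}=\gamma(\alpha_{i,l}+i\delta_{p'})$ with $\alpha_{i,l}\in(-\tau/2,\tau/2]$, a direct computation (using $p\ne 2$, i.e.\ $q^{1/p}\ne q^{1/p'}$) shows that $\alpha\mapsto\gamma(\alpha+i\delta_{p'})$ is injective on $(-\tau/2,\tau/2]$, so the parameters $\alpha_{i,l}$ are mutually distinct across all pairs $(i,l)$. Relabelling yields $f_0=g_1+\cdots+g_m$ with distinct $\alpha_l\in(-\tau/2,\tau/2]$ satisfying $|\Psi(\gamma(\alpha_l+i\delta_{p'}))|=|A|$, with $g_l\in L^{p',\infty}(\mathcal{X})$ and $\mathcal{L}g_l=\gamma(\alpha_l+i\delta_{p'})g_l$. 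Theorem \ref{weaklpchar} delivers the Poisson representation $g_l=\mathcal{P}_{\alpha_l+i\delta_{p'}}F_l$ with $F_l\in L^{p'}(\Omega)$. Uniqueness is obtained by chaining the uniqueness assertions of Theorem \ref{maximumcase}(a) and the $N=1$ case of Remark \ref{remark-on-weak-lp}(1), which is now applicable because Step 3 has collapsed every generalised eigenspace into a genuine one. The main obstacle is precisely Step 2: producing the clean factorisation $\Psi(\mathcal{L})-A_iI=Q_i(\mathcal{L})H_i(\mathcal{L})$ with an invertible $H_i(\mathcal{L})$, which is what enables passage from a $\Psi(\mathcal{L})$-eigendistribution to a finite sum of $\mathcal{L}$-eigenfunctions.
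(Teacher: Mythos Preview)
Your proof is correct, and the key ingredients are the same as the paper's (Theorem \ref{maximumcase}, Remark \ref{remark-on-weak-lp}, the factorisation of $\Psi-A_i$, Corollary \ref{radialgeneralizedeigenfunction}, Proposition \ref{derivativeestimates}, and Theorem \ref{weaklpchar}), but the organisation is genuinely different. The paper first assumes the entire sequence $\{f_k\}$ is radial, applies Theorem \ref{maximumcase} there, and then passes from $(\Psi(\mathcal{L})-A_iI)f_{0,i}=0$ to $\prod_l(\mathcal{L}-w_lI)^{N_l}f_{0,i}=0$ \emph{via the spherical Fourier transform of the radial distribution} $f_{0,i}$; only at the very end is the radiality hypothesis removed by the $\mathscr{R}\tau_g$-trick applied to the whole sequence. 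You instead apply Theorem \ref{maximumcase} directly to the non-radial sequence, and replace the Fourier-side division by $\Phi\circ\gamma$ with the operator-level inversion of $H_i(\mathcal{L})$ using Example \ref{example-multi}(ii); radialisation enters only at the level of the individual pieces $g_{i,l}$ in order to invoke Corollary \ref{radialgeneralizedeigenfunction} and Proposition \ref{derivativeestimates}. Your route is slightly more economical (it avoids replaying the radial case already contained in Theorem \ref{maximumcase}), while the paper's route makes the spherical-Fourier content more explicit. Two small points worth tightening: when you invoke Remark \ref{remark-on-weak-lp}(1) with the mixed exponents $N_{i,l}$, strictly speaking that remark is stated for a common exponent $N$, so you should pass to $N=\max_l N_{i,l}$ first; and the claim that $\mathscr{R}(\tau_g g_{i,l})\in L^{p',\infty}(\mathcal{X})$ deserves one line (use that $\tau_g$ is measure-preserving and that $L^{p',\infty}$ is normable for $p'>1$, so averaging over the compact group $K$ does not increase the norm).
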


\begin{proof}
As done in the proof of Theorem \ref{maximumcase}, we divide the proof into the following steps:

\noindent{\textbf{Step 1:}} We first assume that the sequence $\{f_{k}\}_{k\in\mathbb{Z}}$ consists of only radial functions. Then the hypotheses of this theorem together with Proposition \ref{regulardistributions} (with $p_{1}=p_{2}=p$) implies that $\{f_{k}\}_{k\in\mathbb{Z}}$ is a bi-infinite sequence of radial $L^{p}$-tempered distributions on $\mathcal{X}$ which satisfies  the hypotheses of Theorem \ref{maximumcase} with $\Lambda=\Psi(\mathcal{L})$. Moreover, $\Psi(\mathcal{L})$ is nontrivial as $\Psi$ is a nonconstant. Therefore (\ref{radiallpdistribution}) holds with $f_{0}$ in place of $T_{0}$, that is,
$$(\Psi(\mathcal{L})-A_{1}I)(\Psi(\mathcal{L})-A_{2}I)\cdots(\Psi(\mathcal{L})-A_{j}I)f_{0}=0,$$
where $\{\Psi\circ\gamma(z):z\in S_{p}\}\cap\{w\in\mathbb{C}:|w|=|A|\}=\{A_{1},A_{2},\ldots,A_{j}\}$. By Remark \ref{remark-on-weak-lp}, we have a unique decomposition $f_{0}=f_{0,1}+f_{0,2}+\cdots+f_{0,j}$, for some functions $f_{0,i}\in L^{p^{\prime},\infty}(\mathcal{X})^{\#}$ satisfying
\begin{equation}\label{eigenfunctionmultiplier}
(\Psi(\mathcal{L})-A_{i}I) f_{0,i}=0,\quad\text{for all }i=1,\ldots,j.
\end{equation}

Fix $i\in\{1,\ldots,j\}$. By the maximum modulus principle it follows that the zeros of $\Psi-A_{i}$ with $\gamma(S_{p})$ as its domain, lie on the boundary $\gamma(\partial S_{p})$. On the other hand, since $\Psi-A_{i}$ is analytic on a connected open set $O$ containing $\gamma(S_{p})$ and $\gamma(\partial S_{p})$ is compact, using the identity theorem we infer that $\Psi-A_{i}$ has finitely many zeros in $\gamma(S_{p})$. Let $\{w_{1},\ldots,w_{m_{i}}\}\subset \gamma(\partial S_{p})$ be the zeros of $\Psi-A_{i}$ with multiplicities $N_{1},\ldots,N_{m_{i}}$ respectively. Using the analyticity of $\Psi$, for all $w\in O$, we obtain
$$\Psi(w)-A_{i}=(w-w_{1})^{N_{1}}\cdots(w-w_{m_{i}})^{N_{m_{i}}}\Phi(w),$$
for some analytic function $\Phi$ on $O$ satisfying $\Phi(w)\neq 0$ for all $w\in\gamma(S_{p})$. Combining this fact with (\ref{eigenfunctionmultiplier}), we get
$$\langle (\gamma(z)-w_{1})^{N_{1}}\cdots(\gamma(z)-w_{m_{i}})^{N_{m_{i}}}\widehat{f}_{0,i},\psi\cdot \Phi\circ\gamma\rangle=0,\quad\text{for all }\psi\in\mathcal{H}(S_{p})^{\#}.$$
Since $\Phi(w)\neq 0$ for all $w\in\gamma(S_{p})$, hence so is $\Phi\circ\gamma(z)$ for all $z\in S_{p}$. Therefore $(\Phi\circ\gamma)^{-1}\in\mathcal{H}(S_{p})^{\#}$, which together with the expression above and  Theorem \ref{lp isom} implies that
$$(\gamma(z)-w_{1})^{N_{1}}\cdots(\gamma(z)-w_{m_{i}})^{N_{m_{i}}}\widehat{f}_{0,i}=0.$$
Taking inverse Fourier transform using Theorem \ref{lp isom} we finally have
\begin{equation}\label{generalizedeigenfunctionlaplacian}
(\mathcal{L}-w_{1}I)^{N_{i}}\cdots(\mathcal{L}-w_{m_{i}}I)^{N_{i}}f_{0,i}=0,\quad\text{where }N_{i}=\max\{N_{1},\ldots,N_{m_{i}}\}.
\end{equation}

\noindent{\textbf{Step 2:}} In this step, we shall prove that $N_{i}=1$ in (\ref{generalizedeigenfunctionlaplacian}). Seeking a contradiction, we suppose that $(\mathcal{L}-w_{1}I)\cdots(\mathcal{L}-w_{m_{i}}I)f_{0,i}\neq 0$ and that (\ref{generalizedeigenfunctionlaplacian}) holds for some $N_{i}\geq 2$. Then there exists $1\leq N_{0,i}\leq N_{i}-1$ such that
\begin{align}
(\mathcal{L}-w_{1}I)^{N_{0,i}}\cdots(\mathcal{L}-w_{m_{i}}I)^{N_{0,i}}f_{0,i}&\neq 0,\quad\text{and}\label{contradictionfunction}\\
(\mathcal{L}-w_{1}I)^{N_{0,i}+1}\cdots(\mathcal{L}-w_{m_{i}}I)^{N_{0,i}+1}f_{0,i}&=0.\label{generalisedpropositionfunction}
\end{align}
Then Remark \ref{remark-on-weak-lp} combined with (\ref{generalisedpropositionfunction}) and the fact that $f_{0,i}\in L^{p^{\prime},\infty}(\mathcal{X})^{\#}$ (see Step 1) gives us the decomposition $f_{0,i}=g_{1}+\cdots+g_{m_{i}}$, for some functions $g_{k}\in L^{p^{\prime},\infty}(\mathcal{X})^{\#}$ satisfying
\begin{equation}\label{contradictionfunction3}
(\mathcal{L}-w_{k}I)^{N_{0,i}+1}g_{k}=0,\quad\text{for all }k=1,\ldots,m_{i}.
\end{equation}
Furthermore, from (\ref{contradictionfunction}), there exists $k_{0}\in\{1,\ldots,m_{i}\}$ such that
\begin{equation}\label{contradictionfunction2}
(\mathcal{L}-w_{k_{0}}I)^{N_{0,i}}g_{k_{0}}\neq 0.
\end{equation}
On the other hand, since $w_{k_{0}}=\gamma(\alpha_{k_{0}}+i\delta_{p^{\prime}})$ for a unique $\alpha_{k_{0}}\in(-\tau/2,\tau/2]$, using (\ref{contradictionfunction3}) and Corollary \ref{radialgeneralizedeigenfunction}, we obtain the following boundary representation of $g_{k_{0}}$:
$$g_{k_{0}}(x)=\sum\limits_{r=0}^{N_{0,i}}c_{r}D^{r}\phi_{z}(x)|_{z=\alpha_{k_{0}}+i\delta_{p^{\prime}}},\quad\text{where }c_{0},\ldots,c_{N_{0,i}}\text{ are complex constants}.$$
Now Proposition \ref{derivativeestimates} together with the fact $g_{k_{0}}\in L^{p^{\prime},\infty}(\mathcal{X})^{\#}$ implies that $c_{r}=0$ for all $r=1,\ldots,c_{N_{0,i}}$. Consequently, $(\mathcal{L}-w_{k_{0}}I)g_{k_{0}}=0$ which contradicts (\ref{contradictionfunction2}). Hence $N_{i}=1$ in (\ref{generalizedeigenfunctionlaplacian}). Now using this fact together with Remark \ref{remark-on-weak-lp} we get a unique decomposition $f_{0,i}=g_{1}+\cdots+g_{m_{i}}$, for some functions $g_{k}\in L^{p^{\prime},\infty}(\mathcal{X})^{\#}$ satisfying $(\mathcal{L}-w_{k}I)g_{k}=0$, where $w_{k}=\gamma(\alpha_{k}+i\delta_{p^{\prime}})$ and $\alpha_{k}\in(-\tau/2,\tau/2]$ are all distinct, $k=1\ldots,m_{i}$. Finally, $g_{k}=c_{k}\phi_{\alpha_{k}+i\delta_{p^{\prime}}}$ follows trivially (see Subsection \ref{laplacianpoissontransform}).

\noindent\textbf{Step 3:} The rest of the argument is similar to Step 3 of the proof of Theorem \ref{maximumcase}. For the sake of completeness, we provide a quick sketch.  Let us assume that $\{f_{k}\}_{k\in\mathbb{Z}}$ does not consist of only radial functions. Then for every $g$ in $G$, $\{\mathscr{R}(\tau_{g}f_{k})\}_{k\in\mathbb{Z}}$ is a bi-infinite sequence of radial functions on $\mathcal{X}$  which satisfy both the hypothesis of this theorem. Hence by using the result for radial functions, we obtain $(\mathcal{L}-\gamma(\alpha_{1}+i\delta_{p^{\prime}}))\cdots(\mathcal{L}-\gamma(\alpha_{m}+i\delta_{p^{\prime}}))\mathscr{R}(\tau_{g}f_{0})=0$. Consequently, $\mathscr{R}(\tau_{g}((\mathcal{L}-\gamma(\alpha_{1}+i\delta_{p^{\prime}}))\cdots(\mathcal{L}-\gamma(\alpha_{m}+i\delta_{p^{\prime}}))f_{0}))=0$ for all $g$ in $G$ and hence so is $(\mathcal{L}-\gamma(\alpha_{1}+i\delta_{p^{\prime}}))\cdots(\mathcal{L}-\gamma(\alpha_{m}+i\delta_{p^{\prime}}))f_{0}$. Applying Proposition \ref{generalizedproposition} and Theorem \ref{weaklpchar}, we get the desired conclusion.
\end{proof}

 We also have the following variations of Theorem \ref{pcaseforfunctionsoflaplacianmaximumzero}, which will be used.
We recall that $\Z_+$ is the set of nonnegative integers.

\begin{Theorem}\label{pcaseforfunctionsoflaplacianmaximumnonzero}
For $1\leq p<2$, let $\Psi(\mathcal{L})$ be a multiplier on $\mathcal{S}_{p}(\mathcal{X})$ with symbol $\Psi\circ\gamma$ such that $\Psi\circ\gamma(z)\neq 0$ for all $z\in S_{p}$. Let $\{f_{k}\}_{k\in\mathbb{Z}}$ be a bi-infinite sequence of functions on $\mathcal{X}$ satisfying hypothesis (ii) of Theorem \ref{pcaseforfunctionsoflaplacianmaximumzero}.
\begin{enumerate}
\item[(a)] Suppose that the sequence  $\{f_{k}\}_{k\in\mathbb{Z}_{+}}$ satisfies condition (i) of Theorem \ref{pcaseforfunctionsoflaplacianmaximumzero} for all $k\in\mathbb{Z}_{+}$. If  $|A|= \min\{|\Psi\circ\gamma(z)|:z\in S_{p}\}$ and if the range of $\Psi\circ\gamma$ intersects $\{w\in\mathbb{C}:|w|=|A|\}$ at finitely many distinct points, then the same conclusion holds for $f_0$.

\item[(b)] Suppose that the sequence  $\{f_{-k}\}_{k\in\mathbb{Z}_{+}}$ satisfies condition (i) of Theorem \ref{pcaseforfunctionsoflaplacianmaximumzero} for all $-k$, $k\in\mathbb{N}$. If $|A|= \max\{|\Psi\circ\gamma(z)|:z\in S_{p}\}$ and if the range of $\Psi\circ\gamma$ intersects $\{w\in\mathbb{C}:|w|=|A|\}$ at finitely many distinct points, then the same conclusion holds for $f_0$.
\end{enumerate}
\end{Theorem}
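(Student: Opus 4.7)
The plan is to reduce each part to the corresponding distributional theorem from Section \ref{pcaseroestrichartz} and then to reproduce, essentially verbatim, Steps 1--3 of the proof of Theorem \ref{pcaseforfunctionsoflaplacianmaximumzero}. By Proposition \ref{regulardistributions}, the weak-$L^{p'}$ bound $\|f_k\|_{p',\infty}\leq M$ furnishes the uniform seminorm estimate $|\langle f_k,\phi\rangle|\leq CM\,\nu_{p,m}(\phi)$ required by Theorems \ref{minimumcase} and \ref{maximumcasezero}. In part (a) the forward singly-infinite sequence $\{f_k\}_{k\in\mathbb{Z}_+}$ with $|A|=\min\{|\Psi\circ\gamma(z)|:z\in S_p\}$ satisfies the hypotheses of Theorem \ref{minimumcase} (the required nonvanishing of $\Psi\circ\gamma$ is part of our standing assumption), while in part (b) the backward singly-infinite sequence $\{f_{-k}\}_{k\in\mathbb{Z}_+}$ with $|A|=\max\{|\Psi\circ\gamma(z)|:z\in S_p\}$ satisfies the hypotheses of Theorem \ref{maximumcasezero}. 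Either way, one obtains a unique decomposition $f_0=T_{0,1}+\cdots+T_{0,j}$ into $L^p$-tempered distributions with $\Psi(\mathcal{L})T_{0,i}=A_i T_{0,i}$, where $\{A_1,\ldots,A_j\}$ is the intersection of the range of $\Psi\circ\gamma$ on $S_p$ with $\{w\in\mathbb{C}:|w|=|A|\}$.

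The refinement from an eigendistribution of $\Psi(\mathcal{L})$ to an eigenfunction of $\mathcal{L}$ is independent of how the eigendistribution was produced; it depends only on the local analytic structure of $\Psi$ near the points $A_i$, on the boundary representation (Proposition \ref{generalizedeigenfunction}, Corollary \ref{radialgeneralizedeigenfunction}) and on the derivative estimate (Proposition \ref{derivativeestimates}). Specializing first to radial sequences, I identify each $T_{0,i}$ with a function $f_{0,i}\in L^{p',\infty}(\mathcal{X})^{\#}$ through Remark \ref{remark-on-weak-lp}(1) applied to the projection $\mathbf{P_{i}}$ of \eqref{projectionoperators}. Fixing $i$, I factor $\Psi(w)-A_i=(w-w_1)^{N_1}\cdots(w-w_{m_i})^{N_{m_i}}\Phi(w)$ on a neighborhood of $\gamma(S_p)$, where $\Phi$ is analytic and nowhere zero on $\gamma(S_p)$ (so $(\Phi\circ\gamma)^{-1}\in\mathcal{H}(S_p)^{\#}$) and the zeros $w_k$ lie in $\gamma(\partial S_p)$. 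Transferring to the Helgason--Fourier side, dividing by $\Phi\circ\gamma$, and inverting by Theorem \ref{lp isom} yields $(\mathcal{L}-w_1 I)^{N_i}\cdots(\mathcal{L}-w_{m_i}I)^{N_i}f_{0,i}=0$ with $N_i=\max\{N_1,\ldots,N_{m_i}\}$. Reducing $N_i$ to $1$ proceeds exactly as in Step 2 of the proof of Theorem \ref{pcaseforfunctionsoflaplacianmaximumzero}: a minimal $N_{0,i}\geq 1$ for which the assertion fails yields, via Proposition \ref{generalizedproposition}, a radial weak-$L^{p'}$ generalized eigenfunction with a nontrivial derivative coefficient in the expansion \eqref{radialgeneralizedeigenfunctions2}, in contradiction with Proposition \ref{derivativeestimates}.

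The non-radial case then follows by translating and radializing exactly as in Step 3 of Theorem \ref{pcaseforfunctionsoflaplacianmaximumzero}; afterwards Theorem \ref{weaklpchar} upgrades each simple eigenfunction $g_l$ to a Poisson transform $\mathcal{P}_{\alpha_l+i\delta_{p'}}F_l$ with $F_l\in L^{p'}(\Omega)$. The only delicate point---the main obstacle---is the justification that the zeros $w_k$ of $\Psi-A_i$ lie on $\gamma(\partial S_p)$. In part (b) this is the maximum modulus principle, precisely as in Theorem \ref{pcaseforfunctionsoflaplacianmaximumzero}; in part (a), however, $|A_i|$ is the \emph{minimum} modulus of $\Psi$ on $\gamma(S_p)$, and one must instead invoke the minimum modulus principle, which is available because $\Psi$ is nonconstant \emph{and} nowhere zero on $\gamma(S_p)$. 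This is exactly where the hypothesis $\Psi\circ\gamma\neq 0$ is used essentially, and is the only place where the proof deviates from the already-established template.
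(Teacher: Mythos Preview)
Your proposal is correct and follows essentially the same approach as the paper: reduce to Theorems \ref{minimumcase} and \ref{maximumcasezero} via Proposition \ref{regulardistributions} and Remark \ref{remark-on-weak-lp}, then repeat Steps 1--3 of Theorem \ref{pcaseforfunctionsoflaplacianmaximumzero}. Your explicit flagging of the minimum modulus principle in part (a) --- needed to force the zeros of $\Psi-A_i$ onto $\gamma(\partial S_p)$ when $|A_i|$ is the minimum of $|\Psi\circ\gamma|$ --- is a genuine point that the paper's terse proof leaves implicit in the phrase ``similar to'', and you are right that this is precisely where the nonvanishing hypothesis on $\Psi\circ\gamma$ enters the refinement argument (as opposed to merely enabling the invocation of Theorems \ref{minimumcase} and \ref{maximumcasezero}).
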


\begin{proof}
The proof of (a) is similar to that of Theorem \ref{pcaseforfunctionsoflaplacianmaximumzero} with the only difference that instead of using Theorem \ref{maximumcase}, here we combine Remark \ref{remark-on-weak-lp} and Theorem \ref{minimumcase} to get a decomposition of the form $f_{0}=f_{0,1}+f_{0,2}+\cdots+f_{0,j}$, for some weak $L^{p}$-functions $f_{0,i}$ on $\mathcal{X}$ satisfying (\ref{eigenfunctionmultiplier}).

For (b), once again we use Remark \ref{remark-on-weak-lp} and Theorem \ref{maximumcasezero} to get a unique decomposition of $f_{0}$ as the sum of eigenfunctions of $\Psi(\mathcal{L})$. Finally, the desired result follows by imitating the proof of Theorem \ref{pcaseforfunctionsoflaplacianmaximumzero}.
\end{proof}


We shall further narrow down our focus to some particular examples of multipliers, namely the Laplacian itself, sphere and ball averaging operators and complex time heat operators. In a similar context these multipliers were considered in various other settings (see for example \cite{BKS,KCC,NS,S} and the references therein). 

\subsection{The Laplacian as a multiplier}\label{laplacian}
We begin by considering the Laplacian $\mathcal{L}$ on $\mathcal{X}$ as a multiplier on $\mathcal{S}_{p}(\mathcal{X})$ with symbol $\gamma(z)$ (see (\ref{gammaz})). The result we obtain here will compliment those in \cite{R3}.

Substituting $z=\alpha+i\delta_{p^{\prime}}$ in (\ref{gammaz}) we get  real and imaginary parts of the symbol $\gamma(z)$ of $\mathcal{L}$ as:
\begin{equation}\label{realimaginarygamma}
\Re\gamma(z)=1-\frac{q^{1/p^{\prime}}+q^{1/p}}{q+1}\cos(\alpha\log q)\quad\text{and}\quad\Im\gamma(z)=\frac{q^{1/p^{\prime}}-q^{1/p}}{q+1}\sin(\alpha\log q).
\end{equation}
Thus $\gamma$ maps the lines $\{z\in\mathbb{C}:z=\alpha\pm i\delta_{p},\alpha\in\mathbb{R}\}$ onto concentric ellipses in the complex plane, which degenerates into a line segment when $p=2$. See \cite[Section 3]{R3} for details. More precisely, for $1\leq p\leq r\leq 2$,
$$\gamma(S_{2})\subseteq\gamma(S_{r})\subseteq\gamma(S_{p})\subseteq\gamma(S_{1}).$$
We have following consequences of the theorems proved above.

\begin{Corollary}\label{maxzerolaplacian}
Suppose that $\{f_{k}\}_{k\in\mathbb{Z}}$ is a bi-infinite sequence of functions on $\mathcal{X}$ such that $\|f_{k}\|_{L^{\infty}(\mathcal{X})}\leq M$ for all $k\in\mathbb{Z}$ and for some $M>0$. If $\mathcal{L}f_{k}=A~f_{k+1}$ for all $k\in\mathbb{Z}$, where $A\in\mathbb{C}$ satisfies $|A|=\gamma(\tau/2+i\delta_{\infty})$, then $\mathcal{L}f_{0}=\gamma(\tau/2+i\delta_{\infty})f_{0}$. In particular, $f_{0}=\mathcal{P}_{\tau/2+i\delta_{\infty}}F$ for a unique $F\in L^{\infty}(\Omega)$.
\end{Corollary}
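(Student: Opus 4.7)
The plan is to invoke Theorem \ref{pcaseforfunctionsoflaplacianmaximumzero} with $p=1$ and $\Psi$ the identity function. Under this choice, $\Psi(\mathcal{L}) = \mathcal{L}$ is a multiplier on $\mathcal{S}_{1}(\mathcal{X})$ with symbol $\Psi\circ\gamma = \gamma$ (see Example \ref{example-multi}(i)). Since $p'=\infty$ and $L^{p',\infty}(\mathcal{X}) = L^{\infty}(\mathcal{X})$, the assumption $\|f_k\|_{L^\infty(\mathcal{X})} \leq M$ is exactly hypothesis (ii) of Theorem \ref{pcaseforfunctionsoflaplacianmaximumzero}, and $\mathcal{L}f_k = Af_{k+1}$ is hypothesis (i).

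The main verification is that $|A| = \max\{|\gamma(z)| : z \in S_1\}$ and that this level set hits $\gamma(S_1)$ at only finitely many distinct points. For $z = \alpha + i\delta \in S_1$ with $|\delta|\leq 1/2$, formulas \eqref{realimaginarygamma} give
\[
|\gamma(z)|^{2} = \bigl(1 - a(\delta)\cos(\alpha\log q)\bigr)^{2} + b(\delta)^{2}\sin^{2}(\alpha\log q),
\]
where $a(\delta) = (q^{1/2-\delta}+q^{1/2+\delta})/(q+1)$ and $b(\delta) = (q^{1/2-\delta}-q^{1/2+\delta})/(q+1)$. Viewed as a quadratic in $\cos(\alpha\log q) \in [-1,1]$, it is convex (its leading coefficient is $a^2-b^2 = 4q/(q+1)^2 > 0$) and its vertex lies outside $[-1,1]$, so the maximum over $\alpha$ is attained at $\cos(\alpha\log q) = -1$ and equals $(1+a(\delta))^{2}$. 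A direct differentiation shows $a(\delta)$ is minimized at $\delta=0$ and maximized at $\delta = \pm 1/2$, where $a(\pm 1/2) = 1$. Hence $\max_{z\in S_1}|\gamma(z)| = 2$, and the computation $\gamma(\tau/2 + i\delta_\infty) = 1 - \cos\pi = 2$ confirms that this maximum equals $|A|$. Moreover, by the evenness and $\tau$-periodicity of $\gamma$, all maximizing points in $S_1$ map to the single value $w_1 = 2$; thus the range of $\gamma$ meets $\{|w|=2\}$ at exactly one point.

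Theorem \ref{pcaseforfunctionsoflaplacianmaximumzero} (with $m=1$ and $\alpha_1 = \tau/2$) then gives $f_0 = g_1$ for a unique $g_1 \in L^{\infty}(\mathcal{X})$ with $\mathcal{L} g_1 = \gamma(\tau/2 + i\delta_\infty) g_1$, and the theorem further produces $F \in L^{\infty}(\Omega)$ with $f_0 = \mathcal{P}_{\tau/2+i\delta_\infty} F$. Uniqueness of $F$ follows from Theorem \ref{weaklpchar} together with the injectivity of the Poisson transform on the boundary of $S_1$.

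The only nontrivial step is the maximum-modulus computation: it is concrete but requires care because the symmetries $\gamma(z)=\gamma(-z)=\gamma(z+\tau)$ generate infinitely many maximizers in $S_1$ that must all be recognized as producing a single point in $\gamma(S_1)$, so that the hypothesis of finitely many distinct values on the critical circle is verified.
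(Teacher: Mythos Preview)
Your proof is correct and follows essentially the same approach as the paper: apply Theorem~\ref{pcaseforfunctionsoflaplacianmaximumzero} with $p=1$ and $\Psi(w)=w$, and verify via \eqref{realimaginarygamma} that $\max\{|\gamma(z)|:z\in S_1\}=\gamma(\tau/2+i\delta_\infty)=2$ with this value attained at a single point of the range. The paper's proof simply asserts the computation, whereas you spell out the quadratic-in-$\cos(\alpha\log q)$ argument and the location of its vertex; this is extra detail rather than a different method.
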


\begin{proof}
The proof of this corollary is a direct consequence of Theorem \ref{pcaseforfunctionsoflaplacianmaximumzero} with $p=1$ and $\Psi(w)=w$. Indeed, a simple computation using (\ref{realimaginarygamma}) shows that $\max\{|\gamma(z)|:z\in S_{1}\}=\gamma(\tau/2+i\delta_{\infty})$ and $|\gamma(\alpha+i\delta_{\infty})|<\gamma(\tau/2+i\delta_{\infty})$ for all $-\tau/2< \alpha<\tau/2$.
\end{proof}

\begin{Corollary}
Let $1<p<2$. Suppose that $\{f_{k}\}_{k\in\mathbb{Z}}$ is a bi-infinite sequence of functions on $\mathcal{X}$ such that $\|f_{k}\|_{L^{p^{\prime},\infty}(\mathcal{X})}\leq M$ for all $k\in\mathbb{Z}$ and for some $M>0$.
\begin{enumerate}
\item[(a)] If $z_{1}=i\delta_{p^{\prime}}$ and if $\mathcal{L}f_{k}=A~f_{k+1}$ for all $k\in\mathbb{Z}_{+}$, where $A\in\mathbb{C}$ satisfies $|A|=\gamma(z_{1})$, then $\mathcal{L}f_{0}=\gamma(z_{1})f_{0}$.
\item[(b)] If $z_{2}=\tau/2+i\delta_{p^{\prime}}$ and if $\mathcal{L}f_{-k}=A~f_{-k+1}$ for all $k\in\mathbb{N}$, where $A\in\mathbb{C}$ satisfies $|A|=\gamma(z_{2})$, then $\mathcal{L}f_{0}=\gamma(z_{2})f_{0}$.
\end{enumerate}
In both cases, $f_{0}=\mathcal{P}_{z_{j}}F_{j}$ for a unique $F_{j}\in L^{p^{\prime}}(\Omega)$, where $j=1,2$.
\end{Corollary}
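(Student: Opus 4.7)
The plan is to apply Theorem~\ref{pcaseforfunctionsoflaplacianmaximumnonzero} with $\Psi=\mathrm{id}$, so that $\Psi(\mathcal L)=\mathcal L$ and the symbol $\Psi\circ\gamma=\gamma$, and then combine the resulting eigenfunction statement with Theorem~\ref{weaklpchar} to produce the Poisson transform. The work will be to verify three facts about $\gamma$ on $S_p$ for $1<p<2$: (i) $\gamma$ is nowhere zero on $S_p$ (so Theorem~\ref{pcaseforfunctionsoflaplacianmaximumnonzero} is applicable); (ii) $\min\{|\gamma(z)|:z\in S_p\}=\gamma(i\delta_{p'})=1-a$, attained on $\partial S_p$ only at $\alpha=0$; and (iii) $\max\{|\gamma(z)|:z\in S_p\}=\gamma(\tau/2+i\delta_{p'})=1+a$, attained on $\partial S_p$ only at $\alpha=\tau/2$, where $a=(q^{1/p'}+q^{1/p})/(q+1)\in(0,1)$ for $1<p<2$.

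For (i), I will substitute $w=q^{iz}$ in $\gamma(z)=0$ and see that it forces $w+w^{-1}=(q+1)/\sqrt q$, whose roots are $w=q^{\pm 1/2}$; this gives $\Im z=\pm 1/2$, which lies outside $S_p$ because $|\delta_{p'}|<1/2$. Once (i) is in hand, $1/\gamma$ is holomorphic on $S_p$, so the maximum modulus principle applied to $\gamma$ and to $1/\gamma$ on a fundamental strip of period $\tau$ shows that both $\sup_{S_p}|\gamma|$ and $\inf_{S_p}|\gamma|$ are attained on $\partial S_p$ and equal the corresponding extrema over $\partial S_p$.

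The main quantitative step is the analysis of $|\gamma|$ on $\partial S_p$. Using evenness of $\gamma$ and $\gamma(\bar z)=\overline{\gamma(z)}$, it is enough to look at $z=\alpha+i\delta_{p'}$ with $\alpha\in(-\tau/2,\tau/2]$. Substituting into (\ref{realimaginarygamma}) and writing $u=\cos(\alpha\log q)$ and $b=(q^{1/p'}-q^{1/p})/(q+1)$ gives
\begin{equation*}
|\gamma(\alpha+i\delta_{p'})|^{2}=1+b^{2}-2au+(a^{2}-b^{2})u^{2}.
\end{equation*}
Since $a^{2}-b^{2}=4q/(q+1)^{2}>0$, this is a convex quadratic in $u$; its vertex $u^{*}=a/(a^{2}-b^{2})$ satisfies $u^{*}\ge 1$, equivalent to $a(1-a)\ge -b^{2}$, which is immediate as $a\in(0,1)$. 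Hence on $u\in[-1,1]$ the quadratic is strictly decreasing, so its unique minimum $(1-a)^{2}$ occurs at $u=1$ (i.e.\ $\alpha=0$) and its unique maximum $(1+a)^{2}$ at $u=-1$ (i.e.\ $\alpha=\tau/2$). Plugging $\alpha=0$ and $\alpha=\tau/2$ back into (\ref{realimaginarygamma}) confirms that these extremal values are precisely $\gamma(i\delta_{p'})$ and $\gamma(\tau/2+i\delta_{p'})$, respectively, establishing (ii) and (iii).

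With those facts in place, part (a) becomes a direct invocation of Theorem~\ref{pcaseforfunctionsoflaplacianmaximumnonzero}(a): $|A|=\gamma(i\delta_{p'})$ is the minimum of $|\gamma|$ on $S_p$, and the range of $\gamma$ meets $\{|w|=|A|\}$ at the single point $\gamma(z_1)$, yielding $\mathcal L f_0=\gamma(z_1)f_0$. Part (b) is analogous via Theorem~\ref{pcaseforfunctionsoflaplacianmaximumnonzero}(b) applied to the backward sequence $\{f_{-k}\}$ at the maximum $1+a$, whose only preimage on $\partial S_p$ is $\alpha=\tau/2$, so $\mathcal L f_0=\gamma(z_2)f_0$. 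In both cases, $f_0\in L^{p',\infty}(\mathcal X)$ is an eigenfunction of $\mathcal L$ with eigenvalue $\gamma(z_j)$, and Theorem~\ref{weaklpchar} finally represents $f_0$ as $\mathcal P_{z_j}F_j$ with a unique $F_j\in L^{p'}(\Omega)$. The only slightly delicate point is the convexity/vertex argument that pins down the extremal $\alpha$; everything else is bookkeeping that follows the mold of Theorem~\ref{pcaseforfunctionsoflaplacianmaximumzero}.
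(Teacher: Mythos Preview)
Your proof is correct and follows the same route as the paper: apply Theorem~\ref{pcaseforfunctionsoflaplacianmaximumnonzero} with $\Psi(w)=w$, after checking that $\gamma$ is nonvanishing on $S_p$ for $1<p<2$ and that the extrema of $|\gamma|$ on $S_p$ are $\gamma(i\delta_{p'})$ and $\gamma(\tau/2+i\delta_{p'})$, each attained at a unique boundary point modulo periodicity. The paper's own proof simply asserts the inequality $\gamma(i\delta_{p'})<|\gamma(\alpha+i\delta_{p'})|<\gamma(\tau/2+i\delta_{p'})$ as a consequence of \eqref{realimaginarygamma}, whereas you supply the explicit quadratic computation in $u=\cos(\alpha\log q)$ together with the vertex location $u^*=a/(a^2-b^2)>1$; this is a welcome elaboration of the same idea, not a different argument. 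Your separate invocation of Theorem~\ref{weaklpchar} at the end is harmless but redundant, since the Poisson representation is already part of the conclusion of Theorem~\ref{pcaseforfunctionsoflaplacianmaximumnonzero}.
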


\begin{proof}
A crucial point to note here is that when $1<p<2$, $\gamma(z)\neq 0$ for all $z\in S_{p}$. Therefore unlike the case $p=1$ (see Corollary \ref{maxzerolaplacian}), here we use Theorem \ref{pcaseforfunctionsoflaplacianmaximumnonzero} with $\Psi(w)=w$. Using (\ref{realimaginarygamma}), it follows that $\min\{|\gamma(z)|:z\in S_{p}\}=\gamma(i\delta_{p^{\prime}})$ and $\max\{|\gamma(z)|:z\in S_{p}\}=\gamma(\tau/2+i\delta_{p^{\prime}})$. Furthermore,
$$\gamma(i\delta_{p^{\prime}})<|\gamma(\alpha+i\delta_{p^{\prime}})|<\gamma(\tau/2+i\delta_{p^{\prime}}),\quad\text{for all }-\tau/2< \alpha<\tau/2,$$
which gives us the desired conclusion.
\end{proof}


\subsection{Averaging operators}
Let  $\chi_{S(o,n)}$ and $\chi_{B(o,n)}$  respectively be the indicator functions of the sphere $S(o,n)$ and the ball $B(o, n)$. The average  of a  function $f$ over $S(x, n)$ is given by
$$\mathscr{S}_{n}f(x)=\frac{1}{\# S(o,n)}f\ast \chi_{S(o,n)}(x)=\frac{1}{\# S(o,n)}\sum\limits_{y\in S(x,n)}f(y).$$
Clearly, $\mathscr{S}_{0}f=f$ and $\mathscr{S}_{1}f=f-\mathcal{L}f$ and for  $n\geq 2$ (see \cite[Lemma 1]{FTP1}),
\begin{equation}\label{sphericalaverage}
\mathscr{S}_{n}f=\frac{q+1}{q}\mathscr{S}_{n-1}(\mathscr{S}_{1}f)-\frac{1}{q}\mathscr{S}_{n-2}f.
\end{equation}
Consequently, for each $n$, $\mathscr{S}_{n}=p_{n}(\mathcal{L})$, where $p_{n}$ is a polynomial of degree $n$. On the other hand, the  average of $f$ over the geodesic ball $B(x,n)$ is defined as
$$\mathscr{B}_{n}f(x)= \frac{1}{\# B(o,n)} f\ast \chi_{B(o, n)}(x)=\frac{1}{\# B(o,n)}\sum\limits_{j=0}^{n} (\# S(o,j))\mathscr{S}_{j}f(x),\quad\text{for all }n\in\mathbb{Z}_{+}.$$
From  (\ref{sphericalaverage}) we infer that $\mathscr{B}_{n}$ is also an $n$-degree polynomial of $\mathcal{L}$. Hence both $\mathscr{S}_{n}$ and $\mathscr{B}_{n}$ are multipliers on $\mathcal{S}_{p}(\mathcal{X})$, $1\leq p< 2$, with symbols $\phi_{z}(n)$ and $\psi_{z}(n)$ respectively, where $\phi_{z}(n)$ denotes the value of $\phi_{z}$ (see (\ref{eqsf})) on $S(o,n)$ and
\begin{equation}\label{ballsymbol}
\psi_{z}(n)=\frac{1}{\# B(o,n)}\sum\limits_{j=0}^{n}(\# S(o,j))\phi_{z}(j),\quad\text{for all }n\in\mathbb{Z}_{+}.
\end{equation}
We now consider the analogues of Strichartz's theorem for these averaging operators.

\begin{Corollary}\label{pcaseaveragingoperators}
Fix $n\in\mathbb{N}$. For $1\leq p<2$, let $\{f_{k}\}_{k\in\mathbb{Z}}$ be a bi-infinite sequence of functions on $\mathcal{X}$ such that for all $k\in\mathbb{Z}$, $\Lambda f_{k}=Af_{k+1}$ for some constant $A\in\mathbb{C}^{\times}$ and $\|f_{k}\|_{L^{p^{\prime},\infty}(\mathcal{X})}\leq M$ for some $M>0$.
\begin{enumerate}
\item[(i)] If $\Lambda=\mathscr{S}_{n}$ and if $|A|=\phi_{i\delta_{p^{\prime}}}(n)$ then $f_{0}$ can be uniquely written as $f_{0}=g_{1}+g_{2}$ for some $g_1, g_2\in L^{p^{\prime},\infty}(\mathcal{X})$ satisfying $\mathcal{L}g_{1}=\gamma(i\delta_{p^{\prime}})g_{1}$ and $\mathcal{L}g_{2}=\gamma(\tau/2+i\delta_{p^{\prime}})g_{2}$.
\item[(ii)] If $\Lambda =\mathscr{B}_{n}$ and if $|A|=\psi_{i\delta_{p^{\prime}}}(n)$ then $\mathcal{L}f_{0}=\gamma(i\delta_{p^{\prime}})f_{0}$.
\end{enumerate}
In particular, $g_{j}=\mathcal{P}_{z_{j}}G_{j}$ and $f_{0}=\mathcal{P}_{z_{1}}F$  for some unique functions $G_{j}, F\in L^{p^{\prime}}(\Omega)$, where $z_{1}=i\delta_{p^{\prime}}$ and $z_{2}=\tau/2+i\delta_{p^{\prime}}$.
\end{Corollary}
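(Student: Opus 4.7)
Both operators $\mathscr{S}_n$ and $\mathscr{B}_n$ are polynomials in $\mathcal{L}$ (via the recursion \eqref{sphericalaverage} for the former and the explicit formula for the latter), hence they are multipliers of the form $\Psi(\mathcal{L})$ as in Example \ref{example-multi}(ii), with symbols $\kappa_1(z) := \phi_z(n)$ and $\kappa_2(z) := \psi_z(n)$ respectively. The plan is to apply Theorem \ref{pcaseforfunctionsoflaplacianmaximumzero} in each case, which reduces the task to computing $\max_{z \in S_p}|\kappa_i(z)|$ and identifying, modulo the symmetries $\phi_z = \phi_{-z} = \phi_{z+\tau}$, all $z \in S_p$ at which this maximum is attained.

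For part (i), starting from $\phi_z(x) = \int_{\Omega} p^{1/2+iz}(x,\omega)\,d\nu(\omega)$ and using $p(x,\omega) > 0$, the triangle inequality inside the integral yields $|\phi_z(n)| \le \phi_{i\,\Im z}(n)$. Since $t \mapsto \phi_{it}(n)$ is a strictly convex sum of real positive exponentials in $t$ for $n \ge 1$, its maximum on $|t| \le |\delta_p|$ is attained precisely at $t = \pm \delta_p$, and $\phi_{it} = \phi_{-it}$ identifies both endpoints with the common value $\phi_{i\delta_{p'}}(n)$. Equality in $|\phi_z(n)| \le \phi_{i\,\Im z}(n)$ further forces the phase $e^{i(\Re z)(\log q)\,h_\omega(x)}$ of the integrand to be $\nu$-a.e.\ constant; since $h_\omega(x)$ assumes at least two distinct integer values of the same parity as $\omega$ varies over disjoint sectors $E(\cdot)$, this constancy requires $(\Re z)\log q \in \pi\mathbb{Z}$, i.e., $\Re z \in (\tau/2)\mathbb{Z}$. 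Modulo the symmetries, this leaves exactly the two extremal points $z_1 := i\delta_{p'}$ and $z_2 := \tau/2 + i\delta_{p'}$ in $S_p$, and Theorem \ref{pcaseforfunctionsoflaplacianmaximumzero} then gives the claimed decomposition $f_0 = g_1 + g_2$ with $\mathcal{L} g_\ell = \gamma(z_\ell) g_\ell$.

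For part (ii), by \eqref{ballsymbol} the symbol $\psi_z(n)$ is a nonnegative weighted sum of $\phi_z(j)$'s, so the same argument gives $\max_{z \in S_p}|\psi_z(n)| = \psi_{i\delta_{p'}}(n)$. The crucial additional step is to rule out the candidate $z = \tau/2 + i\delta_{p'}$: a direct computation from \eqref{eqsf} using $q^{i\tau/2} = -1$ together with the $\tau$-periodicity of $\mathbf{c}$ yields $\phi_{\tau/2 + i\delta_{p'}}(j) = (-1)^j \phi_{i\delta_{p'}}(j)$ for all $j \ge 0$, while $\phi_{i\delta_{p'}}(j) > 0$ (being a positive Poisson integral). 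Since $n \ge 1$, the alternation produces strict cancellation:
\begin{equation*}
\left|\sum_{j=0}^{n}(-1)^{j}\,\#S(o,j)\,\phi_{i\delta_{p'}}(j)\right| < \sum_{j=0}^{n}\#S(o,j)\,\phi_{i\delta_{p'}}(j),
\end{equation*}
hence $|\psi_{\tau/2 + i\delta_{p'}}(n)| < \psi_{i\delta_{p'}}(n)$. Consequently $z = i\delta_{p'}$ is the unique extremal point on $S_p$ modulo symmetries, and Theorem \ref{pcaseforfunctionsoflaplacianmaximumzero} delivers $\mathcal{L} f_0 = \gamma(i\delta_{p'}) f_0$.

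The Poisson transform representations $g_\ell = \mathcal{P}_{z_\ell} G_\ell$ and $f_0 = \mathcal{P}_{z_1} F$ then follow from Theorem \ref{weaklpchar}. The main technical point is the phase/parity analysis in (ii) that rules out the secondary boundary extremum at $\tau/2 + i\delta_{p'}$; once this strict modulus loss is secured, everything else is a direct application of the general Strichartz-type theorem proved in the previous section.
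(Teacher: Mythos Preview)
Your proof is correct and reaches the same conclusion via Theorem~\ref{pcaseforfunctionsoflaplacianmaximumzero}, but the route you take to verify the required properties of the symbols is genuinely different from the paper's. The paper proceeds by brute-force computation: it expands $\phi_z(n)$ using \eqref{eqsf} and \eqref{harishchandra} into the auxiliary functions $\gamma_j(z)=q^{izj}+q^{-izj}$, splits into the cases $n$ odd and $n$ even (formulas \eqref{phizodd} and \eqref{phizeven}), and then tracks moduli of these explicit expressions along the boundary line $\alpha+i\delta_{p'}$. Your argument instead exploits the positivity of the Poisson kernel: the single inequality $|\phi_z(n)|\le\phi_{i\,\Im z}(n)$ together with strict convexity of $t\mapsto\phi_{it}(n)$ replaces all of that case analysis, and the equality discussion via the phase of the integrand pins down the extremal boundary points. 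This is considerably cleaner and conceptually more transparent; the paper's computation, on the other hand, yields explicit closed formulas that might be of independent use.

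One small point deserves tightening. In your equality analysis you write that ``$h_\omega(x)$ assumes at least two distinct integer values of the same parity'' and deduce $(\Re z)\log q\in\pi\mathbb{Z}$. Two values of the same parity may differ by $2k$ with $k>1$, which would only give $(\Re z)\log q\in(\pi/k)\mathbb{Z}$. What actually makes the argument work is that for $|x|=n\ge1$ the height $h_\omega(x)$ takes \emph{every} value in $\{-n,-n+2,\ldots,n\}$ on sets of positive $\nu$-measure, so in particular two values differing by exactly $2$ occur (e.g.\ $n$ and $n-2$ for $n\ge2$, or $\pm1$ for $n=1$); comparing these forces $2(\Re z)\log q\in2\pi\mathbb{Z}$. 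With this clarification your argument goes through. The same remark applies implicitly in part~(ii), where ``the same argument'' for $\psi_z(n)$ relies on equality forcing $|\phi_z(j)|=\phi_{i\delta_{p'}}(j)$ for each $j\le n$, and taking $j=1$ already yields $\Re z\in(\tau/2)\mathbb{Z}$.
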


\begin{proof}
\rm{(i):} Fix $n\in\mathbb{N}$ and let $1\leq p<2$. Since $z\mapsto\phi_{z}(n)$ is in $\mathcal{H}(S_{p})^{\#}$, appealing to the maximum modulus principle and Theorem \ref{pcaseforfunctionsoflaplacianmaximumzero}, it is enough to prove the following assertions:
\begin{enumerate}
\item[(a)] $\max\{|\phi_{\alpha+i\delta_{p^{\prime}}}(n)|:-\tau/2<\alpha\leq\tau/2\}=\phi_{i\delta_{p^{\prime}}}(n)$,
\item[(b)] $\{\phi_{\alpha+i\delta_{p^{\prime}}}(n):-\tau/2<\alpha\leq\tau/2~\text{and}~|\phi_{z}(n)|=|A|\}=\{\phi_{i\delta_{p^{\prime}}}(n),\phi_{\tau/2+i\delta_{p^{\prime}}}(n)\},$
\item[(c)] The maximum modulus is attained at only two points on the boundary $\{\alpha+i\delta_{p^{\prime}}:-\tau/2<\alpha\leq \tau/2\}$, namely $z_{1}=i\delta_{p^{\prime}}$ and $z_{2}=\tau/2+i\delta_{p^{\prime}}$.
\end{enumerate}

Depending on whether $n\in \N$ is odd or even, we shall subdivide the proof into two cases. Substituting the formula for the c-function (\ref{harishchandra}) into (\ref{eqsf}), we first obtain, for all $z\in\mathbb{C}\setminus(\tau/2)\mathbb{Z}$,
\begin{align} \label{generalphiz}
\phi_{z}(n)&=\frac{1}{q+1}\left[q^{1-n/2}~\frac{q^{iz(n+1)}-q^{-iz(n+1)}}{q^{iz}-q^{-iz}}
-q^{-n/2}~\frac{q^{iz(n-1)}-q^{-iz(n-1)}}{q^{iz}-q^{-iz}}\right].
\end{align}

\noindent\textbf{Case 1:} We assume that $n\in\mathbb{N}$ is odd. From (\ref{generalphiz}), it follows that
\begin{equation}\label{phizodd}
\phi_{z}(2m+1)=\begin{cases}
\frac{q^{1/2}}{q+1} \gamma_{1}(z), & \text{ if }m=0,\\
 & \\
\frac{q^{-m-1/2}}{q+1}[~q~\gamma_{2m+1}(z)+(q-1)\sum\limits_{j=0}^{m-1}\gamma_{2j+1}(z)~], & \text{ if }m\in\mathbb{N},
\end{cases}
\end{equation}
where, for all $j\in\mathbb{Z}_{+}$, we define
\begin{equation}\label{gammajexplicit}
\gamma_{j}(z)=q^{izj}+q^{-izj}.
\end{equation}
We now fix $j$. Substituting $z=\alpha+i\delta_{p^{\prime}}$ into the expression of $\gamma_{j}(z)$ we obtain
\begin{equation}\label{gammaj}
\Re\gamma_{j}(z)=\left(q^{-j\delta_{p^{\prime}}}+q^{j\delta_{p^{\prime}}}\right)\cos(j\alpha\log q)\quad\text{and}\quad\Im\gamma_{j}(z)=\left(q^{-j\delta_{p^{\prime}}}-q^{j\delta_{p^{\prime}}}\right)\sin(j\alpha\log q).
\end{equation}
Since $1\leq p<2$, from (\ref{gammaj}) it follows that $\gamma_{j}$ maps the line $\{z=\alpha+ i\delta_{p^{\prime}}:-\tau/2<\alpha\leq\tau/2\}$ onto a horizontal ellipse around the origin in the complex plane. In fact, from (\ref{gammaj}) it is clear that $|\gamma_{1}(\alpha+i\delta_{p^{\prime}})|<\gamma_{1}(i\delta_{p^{\prime}})$ for all $\alpha\in(-\tau/2,\tau/2)\setminus\{0\}$ and for these values of $\alpha$, $|\gamma_{j}(\alpha+i\delta_{p^{\prime}})|\leq\gamma_{j}(i\delta_{p^{\prime}})$ for all $j\in\{3,\ldots,2m+1\}$. Implementing these facts in (\ref{phizodd}), we get
\begin{equation}\label{philessodd}
|\phi_{\alpha+i\delta_{p^{\prime}}}(2m+1)|<\phi_{i\delta_{p^{\prime}}}(2m+1),\quad\text{for all }\alpha\in(-\tau/2,\tau/2)\setminus\{0\}\text{ and for all }m\in\mathbb{Z}_{+}.
\end{equation}
Furthermore, from (\ref{gammaj}) it is easy to see that $\gamma_{j}(i\delta_{p^{\prime}})=-\gamma_{j}(\tau/2+i\delta_{p^{\prime}})$ for all $j=1,\ldots,2m+1$. Hence by (\ref{phizodd}), $\phi_{i\delta_{p^{\prime}}}(2m+1)=-\phi_{\tau/2+i\delta_{p^{\prime}}}(2m+1)$. Combining these facts, the assertions (a), (b) and (c) follow for all $n$ odd.

\noindent\textbf{Case 2:} Here we assume that $n$ is even. Once again, we simplify (\ref{generalphiz}) to get
\begin{equation}\label{phizeven}
\phi_{z}(2m)=\begin{cases}
\frac{1}{q+1}\left[\left(1-\frac{1}{q}\right)+\gamma_{2}(z)\right], & \text{ if }m=1,\\
 & \\
\frac{q^{-m}}{q+1}[~(q-1)+q~\gamma_{2m}(z)+(q-1)\sum\limits_{j=1}^{m-1}\gamma_{2j}(z)~], & \text{ if }m\geq 2,
\end{cases}
\end{equation}
where $\gamma_{j}$ is as in (\ref{gammajexplicit}). Observe that unlike Case 1, the first term in $\phi_{z}(2m)$ (see (\ref{phizeven})) is a positive number independent of $z$. Furthermore, using (\ref{gammaj}) we have $|\gamma_{j}(\alpha+i\delta_{p^{\prime}})|\leq\gamma_{j}(i\delta_{p^{\prime}})$ for all $j=4,\ldots,2m$ and $|\gamma_{2}(\alpha+i\delta_{p^{\prime}})|<\gamma_{2}(i\delta_{p^{\prime}})$ for all $\alpha\in(-\tau/2,\tau/2)\setminus\{0,\pm\tau/4\}$. Hence from (\ref{phizeven}), we obtain
\begin{equation}\label{philesseven1}
|\phi_{\alpha+i\delta_{p^{\prime}}}(2m)|<\phi_{i\delta_{p^{\prime}}}(2m),\quad\text{for all }\alpha\in(-\tau/2,\tau/2)\setminus\{0,\pm\tau/4\}\text{ and for all }m\in\mathbb{N}.
\end{equation}
  Using $\gamma_{j}(\pm\tau/4+i\delta_{p^{\prime}})=(-1)^{j/2}\gamma_{j}(i\delta_{p^{\prime}})$ (by (\ref{gammaj})) in (\ref{phizeven}), it is easy to check that $\phi_{\pm\tau/4+i\delta_{p^{\prime}}}(2m)\neq\pm\phi_{i\delta_{p^{\prime}}}(2m)$. Combining this with the fact that $|\phi_{\pm\tau/4+i\delta_{p^{\prime}}}(2m)|\leq\phi_{i\delta_{p^{\prime}}}(2m)$, we get
\begin{equation}\label{philesseven2}
|\phi_{\pm\tau/4+i\delta_{p^{\prime}}}(2m)|<\phi_{i\delta_{p^{\prime}}}(2m),\quad\text{for all }m\in\mathbb{N}.
\end{equation}
Finally, the assertions (a), (b) and (c) follow by using (\ref{phizeven}), (\ref{philesseven1}), (\ref{philesseven2}) and the fact that $\gamma_{j}(i\delta_{p^{\prime}})=\gamma_{j}(\tau/2+i\delta_{p^{\prime}})$ for all $j=2,\ldots,2m$. This completes the proof.

\noindent{\textit{Proof of \rm{(ii)}:}} The proof is similar to that of part (i). Taking modulus on both sides of (\ref{ballsymbol}) and using (\ref{philessodd}), (\ref{philesseven1}) and (\ref{philesseven2}), we obtain
\begin{equation}\label{psiless1}
|\psi_{\alpha+i\delta_{p^{\prime}}}(n)|<\psi_{i\delta_{p^{\prime}}}(n),\quad\text{for all }\alpha\in(-\tau/2,\tau/2)\setminus\{0\}\text{ and for all }n\in\mathbb{N}.
\end{equation}
From (\ref{phizodd}) and (\ref{phizeven}) we also observe that $\phi_{\tau/2+i\delta_{p^{\prime}}}(j)=(-1)^{j}\phi_{i\delta_{p^{\prime}}}(j)$ for all $j\in\mathbb{N}$ and therefore $\psi_{\tau/2+i\delta_{p^{\prime}}}(n)\neq\pm\psi_{i\delta_{p^{\prime}}}(n)$. Implementing this fact in (\ref{ballsymbol}) and using the trivial estimate $|\psi_{\tau/2+i\delta_{p^{\prime}}}(n)|\leq\psi_{i\delta_{p^{\prime}}}(n)$, it follows that
\begin{equation}\label{psiless2}
|\psi_{\tau/2+i\delta_{p^{\prime}}}(n)|<\psi_{i\delta_{p^{\prime}}}(n),\quad\text{for all }n\in\mathbb{N}.
\end{equation}
Finally, combining (\ref{psiless1}) with (\ref{psiless2}), we get the desired result.
\end{proof}

\begin{Remark}
From  (\ref{eqsf}) and (\ref{harishchandra}), we have the following alternative expression for $\phi_z(n), z\in(-\tau/2,\tau/2)\setminus\{0\}, n\in\mathbb{N}$:
\begin{equation}\label{alternativephiexpression}
\phi_{z}(n)=\frac{q^{-n/2}}{q+1}\left[\frac{q\sin((n+1)z\log q)-\sin((n-1)z\log q)}{\sin (z\log q)}\right].
\end{equation}
From (\ref{eqsf}) and the expressions above it is easy to see that the $\tau$-periodic, continuous function $z\mapsto\phi_{z}(n)$ is real-valued for all $z\in S_{2}$. Moreover, if $n\in \N$ is odd then from (\ref{eqsf}) we obtain $\phi_{0}(n)>0$ and $\phi_{\tau/2}(n)<0$. On the other hand, when $n\in\mathbb{N}$ is even, from (\ref{alternativephiexpression}) we have $\phi_{\tau/(4n+4)}(n)>0$ and $\phi_{\tau/(2n+2)}(n)<0$. Therefore by the intermediate value theorem it follows that $\phi_{z}(n)=0$ for some $z\in S_{2}\subseteq S_{p}$. Applying a similar reasoning to the function
$$\psi_{z}(n)=\frac{q^{-n/2}}{q+1}\left[\frac{q^{1/2}\sin(nz\log q)+q\sin((n+1)z\log q)}{\sin (z\log q)}\right],$$
we conclude that the minimum moduli of $z\mapsto\phi_{z}(n)$ and $z\mapsto\psi_{z}(n)$ on $S_{p}$ ($1\leq p<2$) are zero. This  violates the hypothesis of Theorem \ref{minimumcase}. Thus it is not possible to formulate an analogue of Theorem \ref{minimumcase} for spherical and ball averages. This also justifies the use of a bi-infinite sequence in Corollary \ref{pcaseaveragingoperators} instead of a backward infinite sequence.

\end{Remark}

\subsection{The heat operator}
Our next aim is to prove a version of Strichartz's theorem for the complex-time heat operator $\mathscr{H}_{\xi}$, $\xi\in \C^\times$, defined by $\mathscr{H}_{\xi}f=f\ast h_{\xi}$,  where  $\widehat{h}_{\xi}(z)=e^{\xi\gamma(z)}$ (see \cite{AS} for the case $\Re\xi\leq 0$). Clearly, $\mathscr{H}_{\xi}$ includes both the heat semigroup $e^{-t\mathcal{L}}$ (see \cite{CMS1}) and the Schr{\"o}dinger semigroup $e^{it\mathcal{L}}$. We refer to \cite[Theorem 4.0.5]{NS} where in a similar vein for the Riemannian symmetric spaces, only the heat semigroup is considered. We need the following  notation to state our result: For $1\leq p<2$ and  $\xi\in \C^\times$, we define
$$\Phi_{p}(\xi)=(1-\gamma(i\delta_{p^{\prime}}))\cdot((\Re \xi)^{2}+\tanh^{2}(\delta_{p^{\prime}}\log q)(\Im \xi)^{2})^{1/2}.$$
Furthermore, let $\beta_{j}$, $j=1,2$, denote the unique points in $(-\tau/2,\tau/2]$ that satisfy the relation:
\begin{equation}\label{beta}
\Phi_{p}(\xi)\cos\beta_{j}=(-1)^{j} \Re\xi\cdot(1-\gamma(i\delta_{p^{\prime}})),~~\Phi_{p}(\xi)\sin\beta_{j}
=(-1)^{j}\Im\xi\cdot\gamma(\tau/4+i\delta_{p^{\prime}}),\quad\text{for }j=1,2.
\end{equation}

\begin{Corollary}
Fix $\xi\in \C^\times$. For $1\leq p<2$, let $\{f_{k}\}_{k\in\mathbb{Z}}$ be a bi-infinite sequence of functions on $\mathcal{X}$ such that $\|f_{k}\|_{L^{p^{\prime},\infty}(\mathcal{X})}\leq M$ for all $k\in\mathbb{Z}$ and for some $M>0$.
\begin{enumerate}
\item[(i)] If $\mathscr{H}_{\xi}f_{-k}=A~f_{-k+1}$ for all $k\in\mathbb{N}$ and for some $A\in\mathbb{C}$ satisfying $|A|=\exp\{\Re\xi+\Phi_{p}(\xi)\}$, then $\mathcal{L}f_{0}=\gamma(z_{1})f_{0}$, where $z_{1}=\beta_{1}+i\delta_{p^{\prime}}$ and $\beta_{1}$ is as in (\ref{beta}).
\item[(ii)] If $\mathscr{H}_{\xi}f_{k}=A~f_{k+1}$ for all $k\in\mathbb{Z}_{+}$ and for some $A\in\mathbb{C}$ satisfying $|A|=\exp\{\Re\xi-\Phi_{p}(\xi)\}$, then $\mathcal{L}f_{0}=\gamma(z_{2})f_{0}$, where $z_{2}=\beta_{2}+i\delta_{p^{\prime}}$ and $\beta_{2}$ is as in (\ref{beta}).
\end{enumerate}
In both cases, $f_{0}=\mathcal{P}_{z_{j}}F_{j}$ for a unique $F_{j}\in L^{p^{\prime}}(\Omega)$, where $j=1,2$.
\end{Corollary}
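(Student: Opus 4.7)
The plan is to invoke Theorem \ref{pcaseforfunctionsoflaplacianmaximumnonzero} with $\Psi(w)=e^{\xi w}$, so that the multiplier in question is $\mathscr{H}_{\xi} = \Psi(\mathcal{L})$ with symbol $\Psi\circ\gamma(z)=e^{\xi\gamma(z)}$. This symbol never vanishes, so the nonvanishing hypothesis of Theorem \ref{pcaseforfunctionsoflaplacianmaximumnonzero} is automatic. Case (i) of the corollary (backward sequence, $|A|$ equal to the maximum) fits part (b) of that theorem, while case (ii) (forward sequence, $|A|$ equal to the minimum) fits part (a). Everything thus reduces to computing $\max_{S_p}|e^{\xi\gamma(z)}|$ and $\min_{S_p}|e^{\xi\gamma(z)}|$ and verifying that each extremum is attained at a single value of $\Psi\circ\gamma$, so that the decomposition delivered by the theorem collapses to a single eigenfunction of $\mathcal{L}$.

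By the maximum and minimum modulus principles (the latter because $\Psi\circ\gamma$ is nowhere zero and holomorphic on $S_p^\circ$), together with the evenness $\gamma(-z)=\gamma(z)$ and the $\tau$-periodicity of $\gamma$, it suffices to extremize $\Re(\xi\gamma(z))$ over $z=\alpha+i\delta_{p'}$, $\alpha\in(-\tau/2,\tau/2]$. Writing $\xi=a+ib$ and $\theta=\alpha\log q$, I would use (\ref{realimaginarygamma}) to obtain
$$\Re(\xi\gamma(z)) \;=\; a - \bigl(ac\cos\theta + bs\sin\theta\bigr),$$
where $c=(q^{1/p}+q^{1/p'})/(q+1)$ and $s=(q^{1/p'}-q^{1/p})/(q+1)$. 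A short verification yields $c=1-\gamma(i\delta_{p'})$ and $s=c\tanh(\delta_{p'}\log q)$, so that $\sqrt{(ac)^2+(bs)^2}=c\sqrt{(\Re\xi)^2+\tanh^2(\delta_{p'}\log q)(\Im\xi)^2}=\Phi_p(\xi)$.

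Rewriting $ac\cos\theta+bs\sin\theta=\Phi_p(\xi)\cos(\theta-\psi)$ with $\cos\psi=ac/\Phi_p(\xi)$ and $\sin\psi=bs/\Phi_p(\xi)$, and noting that $\Phi_p(\xi)>0$ (since $\xi\neq 0$ and $\tanh(\delta_{p'}\log q)\neq 0$ for $p\neq 2$), both extrema are attained at unique points of $(-\pi,\pi]$: the minimum $-\Phi_p(\xi)$ at $\theta=\psi$ and the maximum $\Phi_p(\xi)$ at $\theta=\psi+\pi$. Consequently,
$$\max_{S_p}|e^{\xi\gamma(z)}| = e^{\Re\xi+\Phi_p(\xi)}, \qquad \min_{S_p}|e^{\xi\gamma(z)}| = e^{\Re\xi-\Phi_p(\xi)},$$
and matching $\theta$ back to $\alpha=\theta/\log q$ identifies the unique maximizer with $\beta_1$ and the unique minimizer with $\beta_2$ as specified by (\ref{beta}) (the sign $(-1)^j$ in (\ref{beta}) accounts precisely for the shift by $\pi$ between $\theta_{\max}$ and $\theta_{\min}$, and the factor $\gamma(\tau/4+i\delta_{p'})$ there plays the role of $s$).

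With this in hand, case (i) satisfies the hypotheses of Theorem \ref{pcaseforfunctionsoflaplacianmaximumnonzero}(b) with the range of $\Psi\circ\gamma$ meeting $\{|w|=|A|\}$ at a single value $e^{\xi\gamma(z_1)}$, whence $f_0$ is a single summand $g_1=\mathcal{P}_{z_1}F_1$ with $\mathcal{L}f_0=\gamma(z_1)f_0$; case (ii) follows identically from part (a). The main obstacle is purely algebraic: carrying out the trigonometric optimization cleanly and verifying that the unique extremizers are precisely the $\beta_j$ defined through (\ref{beta}); once this algebra is in place, the corollary is an immediate consequence of Theorem \ref{pcaseforfunctionsoflaplacianmaximumnonzero}.
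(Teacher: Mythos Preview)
Your proposal is correct and follows essentially the same route as the paper's proof: both apply Theorem \ref{pcaseforfunctionsoflaplacianmaximumnonzero} with $\Psi(w)=e^{\xi w}$, use the maximum/minimum modulus principle together with evenness and $\tau$-periodicity to reduce to the boundary $\{\alpha+i\delta_{p'}\}$, and then optimize $\Re(\xi\gamma(\alpha+i\delta_{p'}))$ via (\ref{realimaginarygamma}). In fact you spell out the trigonometric optimization (the $\Phi_p(\xi)\cos(\theta-\psi)$ rewriting and the identities $c=1-\gamma(i\delta_{p'})$, $s=c\tanh(\delta_{p'}\log q)$) more explicitly than the paper, which simply asserts that ``a straightforward computation'' gives the stated extrema and extremizers.
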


\begin{proof}
Since $\widehat{h}_{\xi}(z)\neq 0$ for all $z\in S_{p}$ and for all $1\leq p< 2$, we shall use Theorem \ref{pcaseforfunctionsoflaplacianmaximumnonzero} to get the desired conclusions. Let us define
$$h(\alpha)=\Re\xi\cdot\Re\gamma(\alpha+i\delta_{p^{\prime}})-\Im\xi\cdot\Im\gamma(\alpha+i\delta_{p^{\prime}}),\quad\text{for all }\alpha\in(-\tau/2,\tau/2].$$
A straightforward computation using (\ref{realimaginarygamma}) shows  that the maximum and the minimum values of $h$ on the interval $(-\tau/2,\tau/2]$ are $\Re\xi+\Phi_{p}(\xi)$ and $\Re\xi-\Phi_{p}(\xi)$ respectively. Since $z\mapsto\widehat{h}_{\xi}(z)$ is an even, $\tau$-periodic and entire function, using maximum modulus principle, it follows that
\begin{align*}
\max\limits_{z\in S_{p}}|e^{\xi\gamma(z)}|&=\max\limits_{-\tau/2<\alpha\leq\tau/2}e^{h(\alpha)}=\exp\{\Re\xi+\Phi_{p}(\xi)\},\quad\text{and}\\
\min\limits_{z\in S_{p}}|e^{\xi\gamma(z)}|&=\min\limits_{-\tau/2<\alpha\leq\tau/2}e^{h(\alpha)}=\exp\{\Re\xi-\Phi_{p}(\xi)\}.
\end{align*}
Furthermore, the maximum and minimum is attained at the points $z_{1}$ and $z_{2}$ respectively, for $z_1, z_2$ as in the hypothesis of this corollary. This proves both (i) and (ii).
\end{proof}

\subsection{Functions satisfying  Hardy-type norm-estimates}\label{remark on hardy type functions}
So far in this section we have taken weak $L^p$-function on $\mathcal X$ as an example of $L^p$-tempered distribution and proved all possible variants of Strichartz's theorem for such functions. In fact, while proving the main results of this section, that is, Theorems \ref{pcaseforfunctionsoflaplacianmaximumzero} and \ref{pcaseforfunctionsoflaplacianmaximumnonzero}, we have used Propositions \ref{multiplieronfunctionspaces}, \ref{regulardistributions}, \ref{derivativeestimates}, Remark \ref{remark-on-weak-lp} and Theorem \ref{weaklpchar}, whose proofs rely heavily on the properties of the weak $L^{p}$ spaces. We shall now see that in all these preparatory results one can substitute weak $L^p$-functions by functions which satisfy  Hardy-type norm estimates, which we shall define below. Finally, a meticulous step by step adaptation of the arguments of the proofs of Theorems \ref{pcaseforfunctionsoflaplacianmaximumzero} and \ref{pcaseforfunctionsoflaplacianmaximumnonzero} along with the following results will lead to analogous outcomes for functions that satisfy Hardy-type estimates.

\vspace*{0.1in}

\noindent{\textbf 1.} Fix $1\leq p<2$. Let $\{\omega_0, \omega_1, \ldots\}$ be a fixed infinite geodesic ray starting from the reference point $o$ in $\mathcal{X}$ such that $d(o,\omega_{n})=n$ for all $n\in\mathbb{Z}_{+}$. We define $H^{r}_{p}(\mathcal{X})$ to be the space of all complex-valued functions $f$ on $\mathcal{X}$ for which
\begin{equation}\label{hardy}
\|f\|_{H^{r}_{p}(\mathcal{X})}:=\begin{cases}
~~\sup\limits_{n\in\mathbb{Z}_{+}}~\phi_{i\delta_{p}}(\omega_{n})^{-1}\left(~\int\limits_{K}|f(k\cdot\omega_n)|^{r}dk~\right)^{1/r}<\infty, & \text{ if }1\leq r<\infty,\\
& \\
~~\sup\limits_{x\in\mathcal{X}}~\phi_{i\delta_{p}}(x)^{-1}|f(x)|<\infty, & \text{ if }r=\infty.
\end{cases}
\end{equation}
Since $\phi_{z}(x)=\mathcal{P}_{z}{\bf 1}(x)$, $\phi_{i\Im z}(x)>0$ for all $x\in\mathcal{X}$ and for all $z\in\mathbb{C}$. Therefore the expression on the right hand side of (\ref{hardy}) is well-defined. Furthermore, from the explicit expression (\ref{eqsf}) of $\phi_{z}$, one can easily derive that for to every $1\leq p<2$, there exist positive constants $A_{p}$ and $B_{p}$ such that
\begin{equation}\label{phipointwise1}
A_{p}~q^{-\frac{|x|}{p^{\prime}}}\leq |\phi_{i\delta_{p}}(x)|\leq B_{p}~q^{-\frac{|x|}{p^{\prime}}},\quad\text{for all }x\in\mathcal{X}.
\end{equation}
Now combining (\ref{phipointwise1}) with (\ref{hardy}) we get, for all $f\in H^{r}_{p}(\mathcal{X})$,
\begin{equation}\label{hardy2}
\|f\|_{H^{r}_{p}(\mathcal{X})}\asymp\begin{cases}
~~\sup\limits_{n\in\mathbb{Z}_{+}}~q^{\frac{n}{p^{\prime}}}\left(~\int\limits_{K}|f(k\cdot\omega_n)|^{r}dk~\right)^{1/r}, & \text{ if }1\leq r<\infty,\\
& \\
~~\sup\limits_{x\in\mathcal{X}}~q^{\frac{|x|}{p^{\prime}}}|f(x)|, & \text{ if }r=\infty.
\end{cases}
\end{equation}

\noindent{\textbf 2.} We also have the following characterization of eigenfunctions of $\mathcal{L}$ that satisfy the Hardy-type estimates (see \cite{R4} for details).

\begin{Theorem}[{{\cite[Theorem 1.2]{R4}}}]\label{hardypcase}
Let $1\leq p<2$ and $f$ be a complex-valued function on $\mathcal{X}$. Suppose that $z=\alpha+i\delta_{p^{\prime}}$, where $\alpha\in\mathbb{R}$. Then $f=\mathcal{P}_{z}F$ for some $F\in L^{r}(\Omega)$ if and only if $f\in H^{r}_{p}(\mathcal{X})$ for $1<r\leq\infty$ and $\mathcal{L}f=\gamma(z)f$. When $r=1$, the above result holds true with $f=\mathcal{P}_{z}\mu$ for some complex measure $\mu$ on the boundary $\Omega$.
\end{Theorem}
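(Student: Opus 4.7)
I would split the equivalence into its two directions: necessity is a one-line Jensen estimate after the right pointwise identity for the Poisson kernel, while sufficiency is the substantive content and the main obstacle, requiring the extraction of a boundary datum from $f$ as a weak limit of step functions on $\Omega$.

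For necessity, suppose $f=\mathcal{P}_z F$ with $F\in L^r(\Omega)$ and $z=\alpha+i\delta_{p'}$. The equation $\mathcal{L}f=\gamma(z)f$ is inherited from the pointwise eigenfunction identity recalled in Subsection~\ref{laplacianpoissontransform}. Since $\Re(1/2+iz)=1-1/p'=1/p$, we have $|p^{1/2+iz}(x,\omega)|=q^{(1/p)h_\omega(x)}$ and, by evenness of the spherical functions, $\int_\Omega|p^{1/2+iz}(x,\omega)|\,d\nu(\omega)=\phi_{i\delta_p}(x)$. Treating $\phi_{i\delta_p}(x)^{-1}|p^{1/2+iz}(x,\omega)|\,d\nu(\omega)$ as a probability measure and applying Jensen's inequality gives, for $1<r<\infty$,
\[
|\mathcal{P}_z F(x)|^r\le\phi_{i\delta_p}(x)^{r-1}\int_\Omega q^{(1/p)h_\omega(x)}|F(\omega)|^r\,d\nu(\omega).
\]
Setting $x=k\cdot\omega_n$, averaging over $k\in K$, and using that $K$ acts transitively on $\Omega$ so that $\int_K|F(k\omega)|^r\,dk=\|F\|_r^r$ by $K$-invariance of $\nu$, one obtains $\|\mathcal{P}_z F\|_{H^r_p(\mathcal{X})}\le\|F\|_r$. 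The cases $r=\infty$ and $r=1$ are similar variants using a trivial pointwise bound in place of Jensen.

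For sufficiency, assume $f\in H^r_p(\mathcal{X})$ satisfies $\mathcal{L}f=\gamma(z)f$. Proposition~\ref{generalizedeigenfunction} with $N=1$ already provides $f=\mathcal{P}_z\lambda$ for some finitely additive measure $\lambda$ on $\Omega$, so the task is to upgrade $\lambda$ to an $L^r$-function (or, when $r=1$, to a complex Borel measure). I would construct explicit approximants by
\[
F_n(\omega):=q^{(1/2-iz)n}\,f(x_n(\omega)),
\]
where $x_n(\omega)$ is the vertex on the ray $\omega$ at distance $n$ from $o$; equivalently $F_n$ is the step function constant on each sector $E(x)$ with $|x|=n$. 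A short computation using $\nu(E(x))\asymp q^{-n}$, $\#S(o,n)\asymp q^n$, $|q^{(1/2-iz)n}|=q^{n/p'}$ and $\phi_{i\delta_p}(\omega_n)\asymp q^{-n/p'}$ (from (\ref{eqsf})) yields
\[
\|F_n\|_{L^r(\Omega)}^r\asymp q^{rn/p'}\cdot\frac{1}{\#S(o,n)}\sum_{|x|=n}|f(x)|^r\le C\,\|f\|_{H^r_p(\mathcal{X})}^r.
\]
For $1<r\le\infty$, reflexivity (Banach-Alaoglu if $r=\infty$) produces a weak or weak-$*$ limit $F\in L^r(\Omega)$ along a subsequence; for $r=1$, the measures $F_n\,d\nu$ have a weak-$*$ accumulation point $\mu$ in the space of complex Borel measures on the compact space $\Omega$.

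The main obstacle is to identify this weak limit with the boundary datum of $f$, that is, to prove $f=\mathcal{P}_z F$ (or $\mathcal{P}_z\mu$). I would establish the reproducing identity
\[
f(x)=\int_\Omega p^{1/2+iz}(x,\omega)\,F_n(\omega)\,d\nu(\omega)\qquad\text{for every }n\ge|x|,
\]
which holds because $\omega\mapsto p^{1/2+iz}(x,\omega)$ is constant on each sector $E(y)$ with $|y|=n\ge|x|$, and iterated application of $\mathcal{L}f=\gamma(z)f$ inductively expresses $f(x)$ as the corresponding weighted sum of the values $f(y)$, $|y|=n$. Since $\omega\mapsto p^{1/2+iz}(x,\omega)$ is then a continuous function on the compact space $\Omega$, we may pass to the weak/weak-$*$ limit along the extracted subsequence to conclude $f(x)=\mathcal{P}_z F(x)$; uniqueness in Proposition~\ref{generalizedeigenfunction} forces $\lambda=F\,d\nu$. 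Verifying the reproducing identity---really a combinatorial decomposition of the Poisson kernel adapted to $\gamma(z)$-eigenfunctions---is the place where the specific $(q+1)$-regular tree structure enters most essentially and constitutes the hard step of the argument.
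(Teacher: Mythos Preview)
The paper does not prove this statement: Theorem~\ref{hardypcase} is quoted verbatim from \cite[Theorem~1.2]{R4} with the parenthetical ``see \cite{R4} for details,'' and no argument is supplied in the present article. Consequently there is nothing here to compare your proposal against; the proof lives entirely in the cited preprint.

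That said, your outline is the standard route to such results and is essentially sound. The necessity direction via Jensen is correct (your use of $\phi_{i\delta_p}=\phi_{-i\delta_p}$ to match the exponent $1/p$ is exactly right, and the $K$-average step uses that $k\mapsto k\cdot\omega_0$ pushes $dk$ to $d\nu$). For sufficiency, the scheme ``bound $\|F_n\|_r$ uniformly, extract a weak/weak-$*$ limit, pass to the limit in the reproducing identity'' is the expected one, and you correctly identify the reproducing identity
\[
f(x)=\int_\Omega p^{1/2+iz}(x,\omega)\,F_n(\omega)\,d\nu(\omega),\qquad n\ge |x|,
\]
as the crux. Two small remarks: your appeal to ``uniqueness in Proposition~\ref{generalizedeigenfunction}'' is unnecessary (and that proposition does not assert uniqueness)---once $f=\mathcal{P}_zF$ is established pointwise you are done; and the inductive verification of the reproducing identity, while doable, is precisely the content that requires the explicit Fig\`a-Talamanca--Picardello/Mantero--Zappa boundary theory on trees, so ``iterated application of $\mathcal{L}f=\gamma(z)f$'' undersells the work involved. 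None of this is a gap in strategy, only in detail.
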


\noindent{\textbf 3.} Our next aim is to prove that an analogue of Proposition \ref{multiplieronfunctionspaces} holds on $H^{r}_{p}(\mathcal{X})$ for all $1\leq p<2$ and $1\leq r\leq\infty$. To see this we consider $\phi\in\mathcal{S}_p(\mathcal{X})^{\#}$. For a finitely supported function $f$ on $\mathcal{X}$,  we have (see (\ref{convolution}))
$$\left(~\int\limits_{K}|f\ast\phi(k\cdot\omega_{n})|^{r}dk\right)^{1/r}=\left(~\int\limits_{K}\left|~\sum\limits_{y\in\mathcal{X}}f(k\cdot y)~\phi(d(\omega_{n},y))~\right|^{r}dk~\right)^{1/r},\quad\text{for all }1\leq r<\infty.$$
Applying the Minkowski's integral inequality to the right hand side of the  expression above and further using (\ref{phipointwise1}), (\ref{hardy2}), it follows that
$$\phi_{i\delta_{p}}(\omega_{n})^{-1}\left(~\int\limits_{K}|f\ast\phi(k\cdot\omega_{n})|^{r}dk~\right)^{1/r}\leq\nu_{p,m}(\phi)\|f\|_{H^{r}_{p}(\mathcal{X})}\mathlarger{\mathlarger{\mathlarger\sum}}\limits_{y\in\mathcal{X}}~\frac{q^{(d(o,\omega_{n})/p^{\prime}-d(o,y)/p^{\prime}-d(\omega_{n},y)/p)}}{(1+d(\omega_{n},y))^{m}}.$$
Plugging in formula (\ref{confluenceformula}) with $x=\omega_{n}$, we obtain
$$\phi_{i\delta_{p}}(\omega_{n})^{-1}\left(~\int\limits_{K}|f\ast\phi(k\cdot\omega_{n})|^{r}dk~\right)^{1/r}\leq\nu_{p,m}(\phi)\|f\|_{H^{r}_{p}(\mathcal{X})}\mathlarger{\mathlarger{\mathlarger\sum}}\limits_{y\in\mathcal{X}}~\frac{q^{-2d(c(\omega_{n},y),y)/p^{\prime}}q^{(1-2/p)d(\omega_{n},y)}}{(1+d(\omega_{n},y))^{m}}.$$
It suffices to prove that the infinite sum in the right hand side of the expression above is bounded by some positive constant $C$, independent of $n$. To get this estimate, we apply a similar reasoning as in the proof of Proposition \ref{lp_multiplier} and decompose $\mathcal{X}$ as the disjoint union of the sets $G_{j,l}(\omega_{n})$ (defined by (\ref{gjx}) and (\ref{gjnx})) so that
$$\mathlarger{\mathlarger{\mathlarger\sum}}\limits_{y\in\mathcal{X}}~\frac{q^{-2d(c(\omega_{n},y),y)/p^{\prime}}q^{(1-2/p)d(\omega_{n},y)}}{(1+d(\omega_{n},y))^{m}}\leq \mathlarger{\mathlarger{\mathlarger\sum}}\limits_{j=0}^{\infty}q^{(1-2/p)j}\mathlarger{\mathlarger{\mathlarger\sum}}\limits_{l=0}^{\infty}\frac{1}{(1+l)^{m}}\leq C,\quad\text{whenever }m\geq 2.$$
This completes the proof when $1\leq r<\infty$. By slightly modifying these arguments one can also prove the case $r=\infty$. Therefore $H^{r}_{p}(\mathcal{X})\ast\mathcal{S}_p(\mathcal{X})^{\#}\subseteq H^{r}_{p}(\mathcal{X})$ and for every $\phi\in\mathcal{S}_p(\mathcal{X})^{\#}$, there exists a seminorm $\nu_{p,m}(\cdot)$ of $\mathcal{S}_p(\mathcal{X})$ and a constant $C>0$ such that
\begin{equation}\label{multiplierhardytype}
\|f\ast\phi\|_{H^{r}_{p}(\mathcal{X})}\leq C~\nu_{p,m}(\phi) \|f\|_{H^{r}_{p}(\mathcal{X})},\quad\text{for all }f\in H^{r}_{p}(\mathcal{X}).
\end{equation}

\noindent{\textbf 4.} In Proposition \ref{generalizedproposition} if we assume that (\ref{generalized2}) holds for some $f\in H^{r}_{p}(\mathcal{X})$ instead of a $L^{p}$-tempered distribution $T$ on $\mathcal{X}$ then a step by step adaptation of the arguments in Remark \ref{remark-on-weak-lp} (1) together with the estimate (\ref{multiplierhardytype}) (instead of Proposition \ref{multiplieronfunctionspaces}) will lead us to similar conclusions with the only difference that $f_{i}\in H^{r}_{p}(\mathcal{X})$ for all $i=1,\ldots,j$. If we further assume that $f\in H^{r}_{p}(\mathcal{X})$ is radial then for each $i$, $f_{i}$ is also radial.

\noindent{\textbf 5.} Let $1\leq p_{1}\leq p_{2}<2$ and $1\leq r\leq \infty$. We will now prove that for $f\in H^{r}_{p_{2}}(\mathcal{X})$, there exists a semi-norm $\nu_{p_{1},m}(\cdot)$ of $\mathcal{S}_{p_{1}}(\mathcal{X})$ and a constant $C>0$ such that
$$|\langle f,\phi \rangle|\leq C\|f\|_{H^{r}_{p_{2}}(\mathcal{X})}~\nu_{p_{1},m}(\phi),\quad\text{for all }\phi\in\mathcal{S}_{p_{1}}(\mathcal{X}).$$
To see this we assume that $\phi\in\mathcal{S}_{p_{1}}(\mathcal{X})^{\#}$. Now fix $m\geq 10$ and observe that $\nu_{p_{1},m}(\phi)<\infty$ (see (\ref{schwartzspace})). Since $\# S(o,n)\asymp q^{n}$, it follows that
$$|\langle f,\phi \rangle|\leq\nu_{p_{1},m}(\phi)\sum\limits_{x\in\mathcal{X}}\frac{q^{-|x|/p_{1}}}{(1+|x|)^{m}}|f(x)|\leq\nu_{p_{1},m}(\phi)\sum\limits_{n=0}^{\infty}\frac{q^{n/p^{\prime}_{1}}}{(1+n)^{m}}\left(~\int\limits_{K}|f(k\cdot\omega_{n})|^{r}~dk~\right)^{1/r}.$$
Since $f\in H^{r}_{p_{2}}(\mathcal{X})$ with $p_{1}\leq p_{2}$, using (\ref{hardy2}) we get
$$|\langle f,\phi \rangle|\leq\nu_{p_{1},m}(\phi)~\|f\|_{H^{r}_{p_{2}}(\mathcal{X})}\sum\limits_{n=0}^{\infty}(1+n)^{-m}q^{n\left(\frac{1}{p^{\prime}_{1}}-\frac{1}{p^{\prime}_{2}}\right)}\leq C\|f\|_{H^{r}_{p_{2}}(\mathcal{X})}~\nu_{p_{1},m}(\phi),$$
which proves our assertion. Thus elements of $H^{r}_{p_{2}}(\mathcal{X})$ are $L^{p_1}$-tempered distributions on $\mathcal{X}$ for $1\leq p_{1}\leq p_{2}<2$ and an analogue of Proposition \ref{regulardistributions} holds on $H^{r}_{p}(\mathcal{X})$.

\noindent{\textbf 6.} Finally, arguing as in the proof of Proposition \ref{derivativeestimates} and then substituting the pointwise estimate (\ref{derivativepointwise}) into (\ref{hardy2}), it is easy to see that if for a polynomial $P$ with complex coefficients and $z_{0}=\alpha+i\delta_{p^{\prime}}$, $P(D)\phi_{z}|_{z=z_{0}}\in H^{r}_{p}(\mathcal{X})^{\#}$ for some $1\leq p<2$ and $1\leq r\leq\infty$, then $P$ is a constant polynomial.


\section*{Acknowledgement}

Sumit Kumar Rano gratefully acknowledges the support provided by the National Board of Higher Mathematics (NBHM) post-doctoral fellowship (Number: 0204/3/2021/R$\&$D-II/7386) from the Department of Atomic Energy (DAE), Government of India.


%
%
%


\end{document}